\theoremstyle{plain}
\newtheorem{thm}{Theorem}[section]
\newtheorem{prop}[thm]{Proposition}
\newtheorem*{prop*}{Proposition}
\newtheorem{lem}[thm]{Lemma}
\newtheorem*{lem*}{Lemma}
\newtheorem{cor}[thm]{Corollary}
\newtheorem*{cor*}{Corollary}
\theoremstyle{definition}
\newtheorem{defn}[thm]{Definition}
\newtheorem*{defn*}{Definition}
\newtheorem{rem}[thm]{Remark}
\newtheorem*{rem*}{Remark}
\newtheorem*{example*}{Example}
\newcommand{\C}{\mathbb{C}}
\newcommand{\N}{\mathbb{N}}
\newcommand{\cO}{\mathcal{O}}
\newcommand{\cP}{\mathcal{P}}
\newcommand{\abs}[1]{\vert #1 \vert}
\newcommand{\abslr}[1]{\left\vert #1 \right\vert}
\newcommand{\norm}[1]{\Vert #1 \Vert}
\newcommand{\normlr}[1]{\left\Vert #1 \right\Vert}
\DeclareMathOperator{\interior}{int}
\DeclareMathOperator{\re}{Re}
\newcommand{\coloneq}{\mathrel{\mathop:}=}
\newcommand{\eqcolon}{=\mathrel{\mathop:}}
\author{Olivier S\`{e}te\footnote{Institute of Mathematics, TU Berlin, Stra{\ss}e des 17. Juni 136, 10623 Berlin, Germany (\texttt{sete@math.tu-berlin.de})
}}
\title{Some properties of Faber--Walsh Polynomials}
\begin{document}
\maketitle

\begin{abstract}
Walsh introduced a generalisation of Faber polynomials to certain compact sets which need not be connected.  We derive several equivalent representations of these Faber--Walsh polynomials, analogous to representations of Faber polynomials.  Some simple asymptotic properties of the Faber--Walsh polynomials on the complement of the compact set are established.  We further show that suitably normalised Faber--Walsh polynomials are asymptotically optimal polynomials in the sense of \cite{EiermannNiethammer1983}.
\end{abstract}

\textbf{Keywords} Faber--Walsh polynomials; Generalised Faber polynomials; Conformal mapping; Multiply connected domains

\textbf{Mathematics Subject Classification (2010)} 30E10; 30E15; 30C20

\section{Introduction}

Faber polynomials, first introduced by Georg Faber in 1903 in \cite{Faber1903}, are polynomials associated with a simply connected compact set $E$ (not a single point).
They allow the expansion of functions $f$ analytic on $E$ into a series $f(z) = \sum_{k=0}^\infty a_k F_k(z)$, where $F_k(z)$ is a monic polynomial of degree $k$, the \emph{$k$-th Faber polynomial}.  The Faber polynomials depend only on $E$, while the coefficients $a_k$ depend on the function $f$ (and $E$).  The series shares many properties with power series, in particular it converges uniformly to $f$ on $E$.

Faber polynomials and Faber series expansions have proven useful on many occasions, see e.g. the introduction in \cite{Curtiss1971} for a large number of classical applications, as well as in the more recent literature, for example for the computation of matrix functions $f(A)$ (cf. \cite{MoretNovati2001}).  However, one drawback in the use of Faber polynomials is that these are limited to simply \emph{connected} compact sets.

In his 1958 article \cite{Walsh1958}, Walsh introduced polynomials that generalise Faber polynomials to compact sets consisting of several components (i.e. whose complement is a $\nu$-times connected region).  These generalised Faber polynomials $b_n(z)$ are called Faber--Walsh polynomials.  They allow in particular the expansion of an analytic function $f$ on $E$ into the series $f(z) = \sum_{k=0}^\infty a_k b_k(z)$, where the $b_k(z)$ are (monic) polynomials of degree $k$ depending on $E$ (but not on $f$) and the coefficients $a_k$ depend on $f$ (and $E$).  The series converges uniformly and maximally to $f$ on $E$.

Unfortunately, while the literature on Faber polynomials is quite extensive, Walsh's article \cite{Walsh1958} seems to have been mostly overlooked.  One notable exception is the book \cite{Suetin1976} of Suetin, which contains a proper subsection on Faber--Walsh polynomials and two references \cite{Kiselev1963,Kiselev1965} on work of Kiselev (in Russian).  
Further, Walsh's article is cited in \cite{Eisenbach1961} and \cite{EisenbachTietz1963}, as well as in Grunsky's book \cite{Grunsky1978} (where the results are however not used).


This article is organised as follows.  Section 2 contains the definition of the Faber--Walsh polynomials from \cite{Walsh1958}, along with the necessary preliminaries and notation.  
In section 3, basic properties of the Faber--Walsh polynomilas are investigated.  In particular, several new representations are derived and some new simple asymtotic properties are established.  Further, a recursion for the computation of the Faber--Walsh polynomials is derived from Walsh's proof of existence.  In section 4 it is shown that (suitably normalised) Faber--Walsh polynomials are asymptotically optimal polynomials.  The last section contains an example.

\section{Faber--Walsh polynomials} \label{sect:fw_polys}

In this section Walsh's generalisation of Faber polynomials is presented.

Let us recall very briefly the setting of Faber polynomials.  More detailed expositions may be found e.g. in the books of Suetin \cite{Suetin1998}, Smirnov and Lebedev \cite[Chapter~2]{SmirnovLebedev}, or in the survey \cite{Curtiss1971} by Curtiss.

Let $E$ be a compact set (not a single point) whose complement $K = \widehat{\C} \backslash E$ is simply connected in $\widehat{\C}$.  Then, by the Riemann mapping theorem, there exists a (unique) conformal bijection $\Phi : K \to \{ w \in \widehat{\C} : \abs{w} > \mu \}$ with $\Phi(\infty) = \infty$ and $\Phi'(\infty) = 1$.
The \emph{$n$-th Faber polynomial $F_n(z)$} can be defined as the polynomial part of the Laurent series of $\Phi(z)^n$ at infinity, i.e. $\Phi(z)^n = F_n(z) + \cO(\tfrac{1}{z})$ in a neighbourhood of infinity.  Equivalently, the Faber polynomials can be defined via their generating function
\begin{equation*}
\frac{\psi'(w)}{\psi(w)-z} = \sum\limits_{n=0}^\infty \frac{F_n(z)}{w^{n+1}}, \quad \abs{w} = \sigma > \mu, \quad z \in \interior \psi( \abs{w} = \sigma ),
\end{equation*}
where $\psi = \Phi^{-1}$.  Both definitions are equivalent.  With the above normalisation of $\Phi$, $F_n(z)$ is a monic polynomial of degree $n$.

In \cite{Walsh1958} Walsh introduced polynomials that generalise Faber polynomials to compact sets with several components (see below for the precise assumptions).  This generalisation relies on the existence of a suitable ``canonical domain'' to which $E$ (more precisely $\widehat{\C} \backslash E$) is conformally equivalent and on the corresponding exterior mapping function.  Both where introduced earlier by Walsh \cite{Walsh1956}.

\begin{defn} \label{defn:lemniscatic_dom}
A \emph{lemniscatic domain} is a domain of the form $\{ w \in \widehat{\C} : \abs{U(w)} > \mu \}$, where $\mu > 0$ is some constant and $U(w) = \prod_{j=1}^\nu (w-a_j)^{m_j}$ for some points $a_1, \ldots, a_\nu \in \C$ and some (real) exponents $m_1, \ldots, m_\nu > 0$ with $\sum_{j=1}^\nu m_j = 1$.
\end{defn}

Note that $U$ is an analytic but in general multiple-valued function.  A lemniscatic domain with $\nu = 1$ simply is the exterior of a circle.

The following lemma from Walsh \cite[Lemma~2]{Walsh1958} is of key importance in the sequel.

\begin{lem} \label{lem:existence_alpha_n}
Given a lemniscatic domain as in definition~\ref{defn:lemniscatic_dom}, there exists a sequence $(\alpha_j)_{j=1}^\infty$, chosen from the foci $a_1, \ldots, a_\nu$, with the following property:  For any closed set $C$ not containing any of the points $a_1, \ldots, a_\nu$, there exist constants $A_1(C), A_2(C) > 0$ such that
\begin{equation}
A_1(C) < \tfrac{ \abs{u_n(w)} }{ \abs{U(w)}^n } < A_2(C), \quad \text{for } n = 0, 1, 2, \ldots \text{ and any } w \in C, \label{eqn:double_bound_un}
\end{equation}
where $u_n(w) \coloneq \prod_{j=1}^n (w - \alpha_j)$.
\end{lem}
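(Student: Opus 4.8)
The key insight is that we must build the sequence $(\alpha_j)$ so that the product $u_n(w) = \prod_{j=1}^n (w-\alpha_j)$ approximates $U(w)^n = \prod_{j=1}^\nu (w-a_j)^{n m_j}$ as closely as possible. Since $U(w)^n$ has "exponent" $n m_j$ at the focus $a_j$, and $u_n$ must use integer multiplicities, the natural choice is to pick $\alpha_j$'s so that among the first $n$ of them, the focus $a_j$ appears roughly $n m_j$ times. Concretely, I would define, for each $j$ and each $n$, the integer $k_j(n) := $ (number of indices $i \le n$ with $\alpha_i = a_j$), and choose the sequence greedily so that $|k_j(n) - n m_j| \le 1$ for all $j$ and all $n$. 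Such a sequence exists: at step $n$ one appends the focus $a_j$ for which $n m_j - k_j(n-1)$ is largest (a Cadzmino/Steinhaus-type apportionment argument), and this keeps every discrepancy bounded by a constant $D$ depending only on $\nu$ and $m_1,\dots,m_\nu$.

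**The main estimate.** With the sequence fixed, write
\begin{equation*}
\frac{u_n(w)}{U(w)^n} = \prod_{j=1}^\nu (w-a_j)^{k_j(n) - n m_j}.
\end{equation*}
Taking absolute values and then logarithms,
\begin{equation*}
\log \frac{|u_n(w)|}{|U(w)|^n} = \sum_{j=1}^\nu \bigl(k_j(n) - n m_j\bigr) \log|w-a_j|.
\end{equation*}
Now fix a closed set $C$ with $a_j \notin C$ for all $j$. On $C$ the quantity $|\log|w-a_j||$ is bounded: it is bounded below away from $-\infty$ since $|w - a_j|$ is bounded away from $0$ on $C$ (here one uses $a_j \notin C$, which for unbounded $C$ requires a small remark — see below), and one must also control it near $w = \infty$. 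Since each exponent $k_j(n) - n m_j$ is bounded in absolute value by $D$ uniformly in $n$, and $\sum_j m_j = 1$ forces $\sum_j (k_j(n) - n m_j) = k_1(n)+\dots+k_\nu(n) - n = 0$, the sum telescopes in a way that cancels the leading $\log|w|$ growth: writing $\log|w-a_j| = \log|w| + \log|1 - a_j/w|$, the $\log|w|$ terms sum to $\bigl(\sum_j (k_j(n)-nm_j)\bigr)\log|w| = 0$, leaving only the bounded remainder terms $\sum_j (k_j(n)-nm_j)\log|1-a_j/w|$. Hence
\begin{equation*}
\Bigl| \log \frac{|u_n(w)|}{|U(w)|^n} \Bigr| \le D \sum_{j=1}^\nu \sup_{w \in C} \bigl| \log|1 - a_j/w| \bigr| =: M(C) < \infty,
\end{equation*}
and exponentiating gives \eqref{eqn:double_bound_un} with $A_1(C) = e^{-M(C)}$ and $A_2(C) = e^{M(C)}$.

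**The anticipated obstacle.** The delicate point is the case where $C$ is unbounded. The hypothesis only says $C$ contains none of the foci $a_j$, so a priori $\sup_{w\in C}|\log|1-a_j/w||$ could blow up if $C$ accumulates at some $a_j$ from within $\widehat{\C}$ — but that is exactly excluded by $C$ being \emph{closed} and not containing $a_j$, so $\dist(C, a_j) > 0$ and $|1 - a_j/w| \ge 1 - |a_j|/\dist(\cdot)$ stays bounded away from $0$ for $|w|$ large while being bounded away from $0$ and $\infty$ on any compact piece. One should also handle $\infty \in C$: there $u_n/U^n \to 1$ trivially since both are monic-like, so the bound is automatic. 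I would package this as: for a closed $C \subseteq \C$ with $\dist(C,\{a_1,\dots,a_\nu\}) > 0$, the function $w \mapsto \log|1 - a_j/w|$ extends continuously to $C \cup \{\infty\}$ with value $0$ at $\infty$, hence is bounded on $C$. With that observation in hand the rest is the elementary telescoping estimate above; the real content of the lemma is the apportionment construction of $(\alpha_j)$, which I would isolate as the first step.
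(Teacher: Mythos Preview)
The paper does not actually prove this lemma: it is quoted from Walsh's original article \cite[Lemma~2]{Walsh1958}, with only the remark afterwards that ``the sequence $(\alpha_j)_{j=1}^\infty$ can be chosen constructively'' there. Your proposal is correct and is essentially Walsh's own argument: build $(\alpha_j)$ by a greedy apportionment so that the multiplicities $k_j(n)$ satisfy $|k_j(n)-nm_j|\le D$ uniformly in $n$, then write $|u_n(w)|/|U(w)|^n=\prod_{j=1}^\nu |w-a_j|^{k_j(n)-nm_j}$ and use both the uniform boundedness of the exponents and the identity $\sum_j(k_j(n)-nm_j)=0$ to bound the ratio on any closed set avoiding the foci. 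Your handling of unbounded $C$ via $\log|w-a_j|=\log|w|+\log|1-a_j/w|$ and the cancellation of the $\log|w|$ contributions is the right device, and your observation that the ratio tends to $1$ at $w=\infty$ disposes of that point as well.
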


The sequence $(\alpha_j)_{j=1}^\infty$ can be chosen constructively from $a_1, \ldots, a_\nu$ (see \cite[Lemma~2]{Walsh1958}).  However, in general, this sequence is \emph{not} unique (see section~\ref{sect:ex_fw_polys} below).  If $\nu = 1$, $\alpha_j = a_1$ for all $j$, so that $u_n(w) = (w-a_1)^n$.

\begin{thm}[{cf. \cite[Theorem~3]{Walsh1956}}] \label{thm:existence_ext_mapping}
Let $E_1, \ldots, E_\nu$ be mutually exterior compact sets (none a single point) of the complex plane such that the complement of $E \coloneq \cup_{j=1}^\nu E_j$ in the extended plane is a $\nu$-times connected region (open and connected set).  Then there exists a lemniscatic domain
\begin{equation}
K_1 = \{ w \in \widehat{\C} : \abs{U(w)} > \mu \} \label{eqn:lemniscatic_domain}
\end{equation}
and a conformal bijection
\begin{equation}
\Phi : \widehat{\C} \backslash E \to \{ w \in \widehat{\C} : \abs{U(w)} > \mu \} \text { with } \Phi(\infty) = \infty \text{ and } \Phi'(\infty) = 1. \label{eqn:def_Phi}
\end{equation}
Here $\mu$ is the logarithmic capacity of $E$.  Further, the inverse conformal bijection satisfies
\begin{equation}
\psi \coloneq \Phi^{-1} : \{ w \in \widehat{\C} : \abs{U(w)} > \mu \} \to \widehat{\C} \backslash E \text{ with } \psi(\infty) = \infty \text{ and } \psi'(\infty) = 1. \label{eqn:def_psi}
\end{equation}
\end{thm}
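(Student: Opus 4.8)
The plan is to build $\Phi$ from the Green's function of the complement and to read off the lemniscatic data (the foci $a_j$, the exponents $m_j$, and $\mu$) from its periods and its behaviour at infinity. Write $\Omega \coloneq \widehat{\C}\backslash E$, a $\nu$-connected domain containing $\infty$. Since each $E_j$ is a continuum and not a single point, $E$ has positive logarithmic capacity, so the Green's function $g = g_\Omega(\cdot,\infty)$ with pole at $\infty$ exists: it is positive and harmonic on $\Omega \setminus \{\infty\}$, vanishes (quasi-everywhere) on $\partial\Omega$, and admits the expansion $g(z) = \log\abs{z} - \log\mu + o(1)$ as $z \to \infty$, where $\mu = \operatorname{cap}(E)$ is precisely the logarithmic capacity asserted in the theorem. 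No boundary regularity is needed here, since $\Phi$ is not claimed to extend continuously to $\partial\Omega$.

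Next I would record the period structure. The locally defined harmonic conjugate $\tilde g$ of $g$ is multivalued; going once around the boundary component $\partial E_j$ it increases by $2\pi m_j$, where $m_j \coloneq \omega(\infty, \partial E_j; \Omega) > 0$ is the harmonic measure of $\partial E_j$ at $\infty$. Computing the total flux of $\nabla g$ through a large circle via the expansion at $\infty$ gives $\sum_{j=1}^\nu m_j = 1$, which is exactly the normalisation demanded of the exponents in Definition~\ref{defn:lemniscatic_dom}. Set $G \coloneq g + i\tilde g$, a multivalued analytic function on $\Omega$ with $\re G = g \ge 0$; the quantity $\mu e^{G}$ has single-valued modulus $\mu e^{g}$ but picks up the multiplicative factor $e^{2\pi i m_j}$ around $\partial E_j$.

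The core step is the construction of the conformal bijection itself. Conformal invariance of the Green's function and of harmonic measure forces any $\Phi$ as in the theorem to satisfy $\log\abs{U(\Phi(z))} = g(z) + \log\mu$, with $U(w) = \prod_{j=1}^\nu (w - a_j)^{m_j}$ and with $\Phi$ mapping $\partial E_j$ onto the component of $\partial K_1$ encircling $a_j$; equivalently $\mu e^{G(z)} = U(\Phi(z))$ as multivalued functions. One therefore seeks a single-valued $\Phi$ for which $\sum_j m_j \log(\Phi(z) - a_j)$ reproduces the periods of $G$, i.e. $\Phi$ winds once around $a_j$ along $\partial E_j$ and around no other focus. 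Existence of such a $\Phi$ together with the correct placement of the foci $a_j$ is obtained by a continuity / normal-families argument (successively uniformising the $\nu$ boundary components, the foci emerging in the limit), which is the classical route of Walsh \cite{Walsh1956}; this is the main obstacle, since the period data fixes the $m_j$ but not the existence of an honest single-valued map. Once $\Phi$ is available, injectivity follows from the argument principle applied to $\Phi - w_0$ combined with positivity of $g$, and surjectivity onto all of $K_1$ follows because $\abs{U(\Phi(z))} = \mu e^{g(z)}$ assumes every value in $(\mu,\infty)$ while the boundary correspondence supplies the correct winding.

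Finally I would fix the normalisations. Since $g$ has its only pole at $\infty$, so does $\log\abs{U(\Phi(z))}$, forcing $\Phi(\infty) = \infty$; comparing $g(z) = \log\abs{z} - \log\mu + o(1)$ with $\log\abs{U(\Phi(z))} = \log\abs{\Phi(z)} - \log\mu + o(1) = \log\abs{z} + \log\abs{\Phi'(\infty)} - \log\mu + o(1)$ yields $\Phi'(\infty) = 1$ and re-confirms $\mu = \operatorname{cap}(E)$. Then $\psi \coloneq \Phi^{-1}$ inherits $\psi(\infty) = \infty$ and $\psi'(\infty) = 1/\Phi'(\infty) = 1$, which completes the proof. (For $\nu = 1$ everything degenerates to the classical Riemann mapping onto the exterior of a disk of radius $\mu$, with $U(w) = w - a_1$.)
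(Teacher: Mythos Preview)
The paper does not prove this theorem at all: it is quoted from Walsh~\cite{Walsh1956} and followed only by bibliographic remarks on alternative proofs (Grunsky, Jenkins, Landau) and on uniqueness up to translation. There is therefore no ``paper's own proof'' to compare your proposal against.

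As to your proposal itself: the structural analysis is correct and useful --- the exponents $m_j$ must be the harmonic measures $\omega(\infty,\partial E_j;\Omega)$, the constant $\mu$ is forced to be the capacity by the expansion of $g$ at infinity, and any admissible $\Phi$ must satisfy $\log\abs{U(\Phi(z))} = g(z) + \log\mu$. But you correctly flag, and do not resolve, the actual difficulty: producing a \emph{single-valued} univalent $\Phi$ together with foci $a_j$ so that $\sum_j m_j \log(\Phi(z)-a_j)$ has exactly the period structure of $G$. At that point you write ``existence \ldots\ is obtained by a continuity / normal-families argument \ldots\ which is the classical route of Walsh~\cite{Walsh1956}'' --- i.e.\ you defer to the very reference the paper cites. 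So your text is a good motivation for why the lemniscatic data must look the way it does, but it is not a self-contained proof; it is, like the paper, ultimately an appeal to~\cite{Walsh1956}. If you want a genuine proof here you would need to carry out one of the existence arguments in full (Walsh's continuity method, Grunsky's extremal-length approach, or Jenkins' variational method), and in particular explain how the foci $a_j$ are determined rather than merely asserted to exist.
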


The existence of such a conformal map was first shown by Walsh in \cite[Theorem~3]{Walsh1956}.  The existence of this new type of canonical domain motivated several subsequent publications giving different proofs, e.g. Grunsky \cite{Grunsky1957,Grunsky1957a} and \cite[Theorem~3.8.3]{Grunsky1978}, Jenkins \cite{Jenkins1958} and Landau \cite{Landau1961}.  For some further references see also \cite[pp.~193-194]{Grunsky1978}.  The lemniscatic domain is unique up to a translation in the $w$-plane.  This fact seems to be mentioned only in the MathSciNet-review of \cite{Walsh1958} by T.\,S.~Motzkin.  It follows from the fact that the canonical domain introduced by Walsh is unique up to a M\"{o}bius transformation.  Now, the normalisations $\psi(\infty) = \infty$ and $\psi'(\infty) = 1$ show that this M\"{o}bius transformation necessarily is a translation.

Note that $G_1(w) = \log \abs{U(w)} - \log(\mu)$ is Green's function with pole at infinity for the lemniscatic domain \eqref{eqn:lemniscatic_domain}.
Then $G = G_1 \circ \Phi$ is Green's function with pole at infinity for $\widehat{\C} \backslash E$.  Let us denote their level curves by
\begin{equation}
\begin{aligned}
\Lambda_\sigma &= \{ w \in \C : G_1(w) = \log(\sigma) \} = \{ w \in \C : \abs{U(w)} = \sigma \mu \},
 \quad & \sigma > 1, \\
\Gamma_\sigma &= \{ z \in \C : G(z) = \log(\sigma) \}, \quad & \sigma > 1.
\end{aligned} 
\end{equation}
Note that $\Gamma_\sigma = \psi(\Lambda_\sigma)$ holds.  Further, denote by $E_\sigma$ the interior of $\Gamma_\sigma$ and by $D_\sigma^\infty$ the exterior of $\Lambda_\sigma$ (including the point at infinity).

\begin{thm}[{cf. \cite[Thorem~3]{Walsh1958}}] \label{thm:existence_fw_poly}
Let $E$, $\psi$ and the lemniscatic domain be as in theorem~\ref{thm:existence_ext_mapping}.  Let $(\alpha_j)_{j=1}^\infty$ be as in lemma~\ref{lem:existence_alpha_n}.  Then, for $z \in \Gamma_\sigma$ and any $w \in D_\sigma^\infty$ holds
\begin{equation}
\frac{ \psi'(w) }{ \psi(w) - z } = \sum\limits_{n=0}^\infty \frac{ b_n(z) }{ u_{n+1}(w) }, \quad \text{where }
b_n(z) = \tfrac{1}{2 \pi i} \int_{\Lambda_\lambda} u_n(\tau) \tfrac{ \psi'(\tau) }{ \psi(\tau) - z } \, d\tau \quad ( \sigma < \lambda < \infty) \label{eqn:defn_bk}
\end{equation}
and $u_n(w) = \prod_{j=1}^n (w-\alpha_j)$.  The $b_n(z)$ are monic polynomials of degree $n$.  The polynomial $b_n(z)$ is called the \emph{$n$-th Faber--Walsh polynomial associated with $E$ and $(\alpha_j)_{j=1}^\infty$}.
\end{thm}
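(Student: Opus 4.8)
The plan is to mimic the classical derivation of the Faber polynomial generating function, substituting the lemniscatic exterior map $\psi$ for the disk exterior map and the product $u_{n+1}(w)$ for the power $w^{n+1}$. First I would fix $\sigma>1$ and pick $\lambda$ with $\sigma<\lambda<\infty$. For $z$ in the interior $E_\sigma$ of $\Gamma_\sigma$ and $\tau\in\Lambda_\lambda$ we have $\psi(\tau)\in\Gamma_\lambda$, which lies ``outside'' $\Gamma_\sigma$, so $\psi(\tau)-z$ never vanishes and $\tau\mapsto \psi'(\tau)/(\psi(\tau)-z)$ is continuous on the compact curve $\Lambda_\lambda$; hence the integral defining $b_n(z)$ makes sense. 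The key analytic input is the Cauchy-type expansion: for $w\in D_\lambda^\infty$ (exterior of $\Lambda_\lambda$, including $\infty$) and $\tau\in\Lambda_\lambda$ one has the geometric-series identity
\begin{equation*}
\frac{1}{u_{n+1}(w)} = \frac{1}{2\pi i}\int_{\Lambda_\lambda}\frac{u_n(\tau)}{u_{n+1}(\tau)}\cdot\frac{d\tau}{\tau-w}\quad\text{is not quite it;}
\end{equation*}
rather, the right mechanism is to write, for $w$ outside $\Lambda_\lambda$ and $\tau$ on $\Lambda_\lambda$,
\begin{equation*}
\frac{1}{\tau-w} \;=\; -\sum_{n=0}^{\infty}\frac{u_n(\tau)}{u_{n+1}(w)},
\end{equation*}
which I would justify from the telescoping identity $\frac{1}{\tau-w}-\frac{u_N(\tau)}{u_N(w)}\frac{1}{\tau-w}$ combined with the two-sided bound $A_1<|u_n(\tau)|/|U(\tau)|^n<A_2$ on $C=\Lambda_\lambda$ from Lemma~\ref{lem:existence_alpha_n} and the analogous bound for $w$ ranging over compact subsets of $D_\lambda^\infty$: since $|U(\tau)|=\lambda\mu$ on $\Lambda_\lambda$ while $|U(w)|>\lambda\mu$ strictly outside, the ratio $|u_n(\tau)|/|u_n(w)|$ decays geometrically, uniformly on compacta, giving locally uniform convergence of the series in $w$.

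Next I would multiply this expansion by $\frac{1}{2\pi i}\psi'(\tau)/(\psi(\tau)-z)$ and integrate over $\Lambda_\lambda$; interchanging sum and integral (legitimate by the uniform convergence just established, the integrand being bounded on the compact curve) yields
\begin{equation*}
\frac{1}{2\pi i}\int_{\Lambda_\lambda}\frac{\psi'(\tau)}{\psi(\tau)-z}\cdot\frac{d\tau}{w-\tau}
\;=\;\sum_{n=0}^{\infty}\frac{b_n(z)}{u_{n+1}(w)}.
\end{equation*}
It then remains to identify the left-hand side with $\psi'(w)/(\psi(w)-z)$. For this I use the residue theorem on the region $D_\lambda^\infty$ (or its complement): the function $\tau\mapsto\psi'(\tau)/(\psi(\tau)-z)$ is analytic in $\widehat{\C}\setminus E_\sigma$ except for a simple pole where $\psi(\tau)=z$, i.e. at $\tau=\Phi(z)$, with residue $1$; evaluating $\frac{1}{2\pi i}\int_{\Lambda_\lambda}\frac{\psi'(\tau)}{\psi(\tau)-z}\frac{d\tau}{w-\tau}$ by pushing the contour out to $\Lambda_R$ and letting $R\to\infty$ (the integrand is $O(1/|\tau|^2)$ there since $\psi'\to1$ and $\psi(\tau)\sim\tau$), the contributions from $\infty$ vanish and I pick up exactly the residue at $\tau=\Phi(z)$ inside, together with the pole at $\tau=w$, producing $\psi'(w)/(\psi(w)-z)$ after a sign check. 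This establishes \eqref{eqn:defn_bk} for $z\in E_\sigma$, $w\in D_\lambda^\infty$; since $\lambda\in(\sigma,\infty)$ is arbitrary and the coefficients $b_n(z)$ are independent of $\lambda$ (Cauchy's theorem: $u_n(\tau)\psi'(\tau)/(\psi(\tau)-z)$ is analytic in $\tau$ between two level curves), the identity extends to all $w\in D_\sigma^\infty$ and, by continuity, to $z\in\Gamma_\sigma$.

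Finally, to see that $b_n$ is a monic polynomial of degree $n$: expand $\psi'(\tau)/(\psi(\tau)-z)$ for large $\tau$ as $\frac{1}{\tau}\sum_{k\ge0} c_k(z)\tau^{-k}$ where, because $\psi(\tau)=\tau+O(1)$ and $\psi'(\tau)=1+O(\tau^{-2})$, the coefficient $c_k(z)$ is a polynomial in $z$ of degree exactly $k$ with leading coefficient $1$ (this is the same computation as for ordinary Faber polynomials, $\psi(\tau)/(\psi(\tau)-z)$ contributing $z^k\tau^{-k}$ at top order). Deforming $\Lambda_\lambda$ out to a large circle and using $u_n(\tau)=\tau^n+(\text{lower order in }\tau)$, the integral $\frac{1}{2\pi i}\int u_n(\tau)\psi'(\tau)/(\psi(\tau)-z)\,d\tau$ extracts a finite $\mathbb{Z}$-linear combination of the $c_k(z)$ for $k\le n$, with the $c_n(z)$-term (coefficient $1$, from the $\tau^n$ in $u_n$) dominating; hence $\deg b_n=n$ and $b_n$ is monic. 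The main obstacle is the careful justification of the series expansion of $\frac1{\tau-w}$ and its term-by-term integration uniformly in $w$ on compact subsets of $D_\lambda^\infty$ — this is exactly where Lemma~\ref{lem:existence_alpha_n} is indispensable, since without the two-sided control of $|u_n|$ by $|U|^n$ the products $u_n$ need not behave like a geometric progression and the interchange could fail.
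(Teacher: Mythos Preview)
Your approach is essentially correct, but there are two slips worth flagging. First, in the residue step: when you push $\Lambda_\lambda$ outward to $\Lambda_R$, the only pole you cross is $\tau=w$ (which lies in $D_\lambda^\infty$); the point $\Phi(z)$, when it exists at all (it does not for $z\in E$), lies \emph{inside} $\Lambda_\sigma\subset\Lambda_\lambda$ and is never crossed. So the residue at $\Phi(z)$ should not appear in your accounting --- the correct computation uses only $\operatorname{Res}_{\tau=w}\frac{\psi'(\tau)}{\psi(\tau)-z}\cdot\frac{1}{w-\tau}=-\frac{\psi'(w)}{\psi(w)-z}$, which indeed gives the right answer. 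Second, ``$\mathbb{Z}$-linear combination'' is inaccurate: the coefficients multiplying the $c_k(z)$ are the elementary symmetric functions of $\alpha_1,\dots,\alpha_n$, hence complex numbers, not integers.

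On the comparison with the paper: for the generating-function identity your route coincides with what the paper (following Walsh) uses, namely the Newton/Hermite-type expansion $\frac{1}{w-\tau}=\sum_{n\ge0}\frac{u_n(\tau)}{u_{n+1}(w)}$ with remainder $\frac{u_N(\tau)}{u_N(w)(w-\tau)}$, controlled via Lemma~\ref{lem:existence_alpha_n}. Where you genuinely diverge is in showing that the $b_n$ are monic polynomials of degree $n$. The paper's argument (Section~\ref{sect:recursion}, after Walsh) is constructive: it introduces the remainder numerator $N_n(\tau,z)$ from \eqref{eqn:definition_N_n_tau_z} and shows by induction that forcing the coefficient of $\tau^1$ in $N_n(\tau,z)$ to vanish determines $b_n(z)=\beta_0^{(n-1)}(z)$ as a monic polynomial of degree $n$, yielding simultaneously a recursion in terms of the Laurent coefficients of $\psi$ and the $\alpha_j$. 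Your Laurent-expansion/residue-at-infinity argument is more direct and conceptual --- it is essentially the content of Proposition~\ref{prop:fw_representations}(2) --- but it does not by itself produce the recursion \eqref{eqn:recursion_beta_0^n}--\eqref{eqn:Zshg_bn_und_beta_0^n} that the paper exploits for explicit computation.
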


Walsh's proof that the $b_n(z)$ are indeed polynomials is a constructive one.  It is presented at the beginning of section~\ref{sect:recursion} below.  In particular an algorithm for the computation of the $b_n(z)$ can be derived from it.

The Faber--Walsh polynomials are independent of the lemniscatic domain and exterior mapping function, as the next lemma shows.  This fact seems to have been overlooked in the literature so far.

\begin{lem} \label{lem:fw_polys_independent_of_K1_psi}
Let $E$ be as in theorem~\ref{thm:existence_ext_mapping} and let $K_1$, $\widetilde{K}_1$ be lemniscatic domains with corresponding $\psi$, $\widetilde{\psi}$ as in \eqref{eqn:def_psi}.  Then there exists a translation $T$ with $T = \psi^{-1} \circ \widetilde{\psi}$.  If $(\alpha_j)_{j=1}^\infty$ and $(\widetilde{\alpha}_j)_{j=1}^\infty$ are as in lemma~\ref{lem:existence_alpha_n} with $T(\widetilde{\alpha}_j) = \alpha_j$, then the polynomials $b_n(z)$ and $\widetilde{b}_n(z)$ defined by \eqref{eqn:defn_bk} satisfy $b_n(z) = \widetilde{b}_n(z)$ for all $n = 0, 1, 2, \ldots$.
\end{lem}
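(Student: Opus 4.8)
The plan is to reduce the identity $b_n=\widetilde b_n$ to the change of variables induced by the translation $T$ in the contour integral \eqref{eqn:defn_bk}. First I would record the structural consequences of $T=\psi^{-1}\circ\widetilde\psi$ being a translation. As recalled after theorem~\ref{thm:existence_ext_mapping}, $T$ is (the restriction of) a M\"obius transformation by the uniqueness of Walsh's canonical domain; since $\psi\circ T=\widetilde\psi$ and both $\psi,\widetilde\psi$ are of the form $w+\cO(1)$ near $\infty$ by \eqref{eqn:def_psi}, a M\"obius map $T$ fixing $\infty$ must then be $T(w)=w+c$ for some $c\in\C$. Writing the two lemniscatic domains as $K_1=\{\abs{U(w)}>\mu\}$ and $\widetilde K_1=\{\abs{\widetilde U(w)}>\widetilde\mu\}$, theorem~\ref{thm:existence_ext_mapping} gives $\mu=\widetilde\mu$ (both equal the logarithmic capacity of $E$), and since $T$ carries $\widetilde K_1$ onto $K_1$ we get $\{\abs{\widetilde U(w-c)}>\mu\}=\{\abs{U(w)}>\mu\}$; as $U(w)$ and $\widetilde U(w-c)$ are both asymptotic to $w$ at $\infty$ (the exponents sum to $1$), this forces $U(w)=\widetilde U(w-c)$. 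Hence the foci and exponents of $\widetilde K_1$ are mapped by $T$ to those of $K_1$ — so the hypothesis $T(\widetilde\alpha_j)=\alpha_j$ is consistent (and realisable: applying $T$ to an admissible sequence for $\widetilde K_1$ gives an admissible sequence for $K_1$, as the construction in lemma~\ref{lem:existence_alpha_n} depends only on the exponents) — the level curves satisfy $T(\widetilde\Lambda_\lambda)=\Lambda_\lambda$ for all $\lambda>1$, and from $\alpha_j=\widetilde\alpha_j+c$ together with $\widetilde\psi=\psi\circ T$ we read off $u_n(w+c)=\widetilde u_n(w)$, $\widetilde\psi(w)=\psi(w+c)$, $\widetilde\psi'(w)=\psi'(w+c)$. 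Finally, Green's function of $\widehat\C\backslash E$ with pole at infinity is intrinsic to $E$, so the curves $\Gamma_\sigma$ in the $z$-plane — hence the admissible range of $\lambda$ in \eqref{eqn:defn_bk} — are the same for both constructions.

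Next I would substitute $\tau=T(\sigma)=\sigma+c$ in $b_n(z)=\tfrac{1}{2\pi i}\int_{\Lambda_\lambda}u_n(\tau)\tfrac{\psi'(\tau)}{\psi(\tau)-z}\,d\tau$. Then $d\tau=d\sigma$, the contour $\Lambda_\lambda$ is replaced by $\widetilde\Lambda_\lambda$ with the same orientation (a translation being orientation preserving), and the integrand transforms as $u_n(\tau)=\widetilde u_n(\sigma)$, $\psi(\tau)=\widetilde\psi(\sigma)$, $\psi'(\tau)=\widetilde\psi'(\sigma)$ by the identities above, so the integral becomes exactly the one defining $\widetilde b_n(z)$ in \eqref{eqn:defn_bk}; this gives $b_n(z)=\widetilde b_n(z)$ for $z\in\Gamma_\sigma$, hence for all $z\in\C$ since both sides are polynomials. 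An alternative avoiding any bookkeeping of orientations is to substitute $w=v+c$ directly in the generating function identity of theorem~\ref{thm:existence_fw_poly}: its left side $\tfrac{\psi'(w)}{\psi(w)-z}$ becomes $\tfrac{\widetilde\psi'(v)}{\widetilde\psi(v)-z}$ and its right side becomes $\sum_{n\geq 0}b_n(z)/\widetilde u_{n+1}(v)$, valid for $v$ in the exterior of $\widetilde\Lambda_\sigma$; comparing this with the expansion of the same function provided by theorem~\ref{thm:existence_fw_poly} applied to $\widetilde K_1$, and using that the coefficients of a series $\sum_{n\geq 0}c_n/\widetilde u_{n+1}(v)$ are uniquely determined (they can be extracted successively from the Laurent expansion at $\infty$, since $1/\widetilde u_{n+1}(v)=v^{-(n+1)}+\cO(v^{-(n+2)})$), again yields $b_n(z)=\widetilde b_n(z)$.

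I expect the only genuinely delicate step to be the first paragraph: establishing that $T$ is a translation and that it identifies the full lemniscatic data ($\mu$, the foci, the exponents, the level curves), so that $T(\widetilde\alpha_j)=\alpha_j$ is a meaningful hypothesis and entails $u_n\circ T=\widetilde u_n$. Once this dictionary is in place, the passage from $b_n$ to $\widetilde b_n$ is a routine change of variables.
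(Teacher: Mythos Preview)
Your proposal is correct and follows essentially the same route as the paper: establish that $T(\widetilde w)=\widetilde w+\beta$ is a translation, read off $\widetilde\psi=\psi\circ T$, $\widetilde u_n=u_n\circ T$, $T(\widetilde\Lambda_\lambda)=\Lambda_\lambda$, and then perform the substitution $\tau=\widetilde\tau+\beta$ in the defining integral \eqref{eqn:defn_bk}. The paper's proof is terser (it simply cites \cite{Walsh1956} for the translation fact and does not spell out the correspondence of foci and level curves), while you supply more of that background and add an alternative via the generating function; both arrive at the same change-of-variables computation.
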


\begin{proof}
The existence of the translation $T$, $w = T(\widetilde{w}) = \widetilde{w} + \beta$, satisfying $T = \psi^{-1} \circ \widetilde{\psi}$ on $\widetilde{K}_1$ has been established in \cite{Walsh1956}, see also the discussion below theorem~\ref{thm:existence_ext_mapping}.  Clearly, $\widetilde{\psi}(\widetilde{w}) = \psi(\widetilde{w} + \beta)$ and $\widetilde{\psi}'(\widetilde{w}) = \psi'(\widetilde{w} + \beta)$.  Note that $a_j = T(\widetilde{a}_j)$ and, since $\alpha_j = T(\widetilde{\alpha}_j)$,  $\widetilde{u}_n(\widetilde{\tau}) = u_n(\widetilde{\tau}+\beta)$.
Let $1 < \sigma < \lambda$.  Then, for any $z \in \Gamma_\sigma$, formula \eqref{eqn:defn_bk} shows
\begin{equation*}
\widetilde{b}_n(z)
= \tfrac{1}{2 \pi i} \int_{ \widetilde{\Lambda}_\lambda } \widetilde{u}_n(\widetilde{\tau}) \tfrac{ \widetilde{\psi}'(\widetilde{\tau}) }{ \widetilde{\psi}(\widetilde{\tau}) - z } \, d\widetilde{\tau}
= \tfrac{1}{2 \pi i} \int_{ \Lambda_\lambda } u_n(\tau) \tfrac{ \psi'(\tau) }{ \psi(\tau) - z } \, d\tau = b_n(z).
\end{equation*}
Here we used that  $w \in \Lambda_\lambda$ if and only if $\widetilde{w} \in \widetilde{\Lambda}_\lambda$, so that if $\widetilde{\gamma}$ is a parametrisation of $\widetilde{\Lambda}_\lambda$, then $\gamma(t) = \widetilde{\gamma}(t) + \beta$ is a parametrisation of $\Lambda_\lambda$.
\end{proof}

Similar to Faber polynomials, Faber--Walsh polynomials allow the series expansion of functions analytic on compact sets.

\begin{thm}[{cf. \cite[Thorem~3]{Walsh1958}}] \label{thm:fw_series}
In the notation of theorem~\ref{thm:existence_fw_poly}, let $f$ be analytic on $E$.  Then there exists a largest number $\rho > 1$ such that $f$ is analytic and single-valued in $E_\rho$.  Inside $E_\rho$ the function $f$ admits a series expansion
\begin{equation}
f(z) = \sum\limits_{k=0}^\infty a_k b_k(z), \quad a_k = \tfrac{1}{2 \pi i} \int\limits_{\Lambda_\lambda} \tfrac{ f(\psi(\tau)) }{ u_{k+1}(\tau) } \, d\tau, \quad 1 < \lambda < \rho. \label{eqn:fw_series}
\end{equation}
The series converges maximally to $f$ on $E$ (and locally uniformly in $E_\rho$ to $f$), i.e.
\begin{equation}
\limsup\limits_{n \to \infty} \norm{ f - \sum_{k=0}^n a_k b_k }_E^{\frac{1}{n}} = \tfrac{1}{\rho}, \label{eqn:asymptotic_error_f-sn}
\end{equation}
and the coefficients satisfy
\begin{equation*}
\limsup\limits_{k \to \infty} \abs{a_k}^{\frac{1}{k}} = \tfrac{1}{\rho \mu}.
\end{equation*}
\end{thm}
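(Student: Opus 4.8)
The plan is to derive the series expansion \eqref{eqn:fw_series} from the generating-function identity \eqref{eqn:defn_bk} of Theorem~\ref{thm:existence_fw_poly}, mimicking the classical Faber-series argument. First I would fix $\lambda$ with $1 < \lambda < \rho$ and write the Cauchy integral representation of $f$ on $E_\lambda$: since $f$ is analytic and single-valued on the closed interior of $\Gamma_\lambda$, for $z$ in the interior of $\Gamma_\lambda$ one has $f(z) = \tfrac{1}{2\pi i}\int_{\Gamma_\lambda} \tfrac{f(\zeta)}{\zeta - z}\,d\zeta$. Pulling this back to the $w$-plane via $\zeta = \psi(\tau)$, $d\zeta = \psi'(\tau)\,d\tau$, turns it into $f(z) = \tfrac{1}{2\pi i}\int_{\Lambda_\lambda} f(\psi(\tau))\,\tfrac{\psi'(\tau)}{\psi(\tau) - z}\,d\tau$. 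Now I substitute the generating function \eqref{eqn:defn_bk}, $\tfrac{\psi'(\tau)}{\psi(\tau)-z} = \sum_{k=0}^\infty \tfrac{b_k(z)}{u_{k+1}(\tau)}$, valid for $\tau \in \Lambda_\lambda$ and $z \in \Gamma_\sigma$ with $\sigma < \lambda$; integrating term by term (the interchange justified below) yields exactly $f(z) = \sum_{k=0}^\infty a_k b_k(z)$ with $a_k = \tfrac{1}{2\pi i}\int_{\Lambda_\lambda} \tfrac{f(\psi(\tau))}{u_{k+1}(\tau)}\,d\tau$, and Cauchy's theorem together with Lemma~\ref{lem:existence_alpha_n} (applied to $C = \Lambda_\lambda$, which avoids the foci $a_j$) shows $a_k$ is independent of $\lambda \in (1,\rho)$.

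Next I would establish the two $\limsup$ statements, and I expect this to be the technical heart of the argument. For the coefficient asymptotics: on $\Lambda_\lambda$ we have $\abs{U(\tau)} = \lambda\mu$ constant, while $f(\psi(\tau))$ is bounded there, so from the estimate $\abs{u_{k+1}(\tau)} \ge A_1(\Lambda_\lambda)\,\abs{U(\tau)}^{k+1} = A_1(\Lambda_\lambda)(\lambda\mu)^{k+1}$ of \eqref{eqn:double_bound_un} one gets $\abs{a_k} \le \tfrac{\text{length}(\Lambda_\lambda)}{2\pi}\,\tfrac{\max_{\Lambda_\lambda}\abs{f\circ\psi}}{A_1(\Lambda_\lambda)(\lambda\mu)^{k+1}}$, hence $\limsup_k \abs{a_k}^{1/k} \le \tfrac{1}{\lambda\mu}$; letting $\lambda \uparrow \rho$ gives $\le \tfrac{1}{\rho\mu}$. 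The reverse inequality is the subtle one: if $\limsup_k \abs{a_k}^{1/k} < \tfrac{1}{\rho'\mu}$ for some $\rho' > \rho$, then the upper bound $\abs{b_k(z)} \le A_2(C)\,\abs{U(\Phi(z))}^k$ valid on compact subsets $C$ of $E_{\rho'}\setminus E$ (again from \eqref{eqn:double_bound_un}, with $C$ the image under $\Phi$ of the relevant compact set, which must be checked to avoid the foci) forces $\sum a_k b_k$ to converge locally uniformly on $E_{\rho'}$ to an analytic function agreeing with $f$ near $E$; this contradicts the maximality of $\rho$, giving $\limsup_k \abs{a_k}^{1/k} = \tfrac{1}{\rho\mu}$. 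Then \eqref{eqn:asymptotic_error_f-sn} follows similarly: the tail $\norm{f - \sum_{k=0}^n a_k b_k}_E = \norm{\sum_{k>n} a_k b_k}_E$ is bounded on $E$ by $\sum_{k>n}\abs{a_k}\,\max_E\abs{b_k}$, and since $\max_E\abs{b_k}^{1/k} \to \mu$ (from \eqref{eqn:double_bound_un} with $C$ a compact neighbourhood of $E$ in the $w$-picture, where $\abs{U}$ approaches its boundary value $\mu$) one obtains $\limsup_n \norm{\cdot}_E^{1/n} \le \tfrac{1}{\rho}$; the matching lower bound again uses that equality $< \tfrac1\rho$ would extend $f$ analytically past $E_\rho$.

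The main obstacle, as flagged above, is the lower-bound (maximality) direction: one must carefully use that Lemma~\ref{lem:existence_alpha_n} gives uniform two-sided control of $\abs{u_n(w)}/\abs{U(w)}^n$ on closed sets avoiding the foci, and then argue that if the error decayed faster than $\rho^{-n}$ the Faber--Walsh series would define a single-valued analytic continuation of $f$ to some $E_{\rho'}$ with $\rho' > \rho$ — contradicting that $\rho$ was chosen largest. A minor care point throughout is that the sets $C$ to which Lemma~\ref{lem:existence_alpha_n} is applied (level curves $\Lambda_\lambda$, compact subsets of lemniscatic level regions) genuinely contain no $a_j$; this holds because the foci $a_j$ lie in the complement of the lemniscatic domain, i.e. inside all the $\Lambda_\sigma$ for $\sigma > 1$. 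I would also remark that the existence of the "largest $\rho$" is essentially the definition of the radius of analyticity in the $E_\sigma$-scale and needs only that $f$ is analytic on a neighbourhood of $E = E_1$, together with single-valuedness inside each $E_\sigma$.
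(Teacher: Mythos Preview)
The paper does not give its own proof of this theorem; it is quoted from Walsh's article (stated as ``cf.\ [Theorem~3, Walsh1958]'') without argument. Your outline is the standard proof and is essentially Walsh's: Cauchy integral for $f$ on $\Gamma_\lambda$, pull back to $\Lambda_\lambda$, expand the kernel via the generating function \eqref{eqn:defn_bk}, and interchange sum and integral by uniform convergence (the latter follows from \eqref{eqn:double_bound_un} applied to $C=\Lambda_\lambda$ together with the bound $\abs{b_k(z)}\le A(\sigma)(\sigma\mu)^k$ on $\Gamma_\sigma$, $\sigma<\lambda$). The $\limsup$ statements are handled exactly as you indicate.

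One technical point is slightly misstated. You justify $\max_E\abs{b_k}^{1/k}\to\mu$ by applying \eqref{eqn:double_bound_un} to ``a compact neighbourhood of $E$ in the $w$-picture''. This does not quite work: $\Phi$ is undefined on $E$, and any closed set $C$ in the $w$-plane that touches the boundary $\abs{U(w)}=\mu$ may contain foci $a_j$, so Lemma~\ref{lem:existence_alpha_n} need not apply. The clean route (and the one the paper itself uses later, in the proof of Proposition~\ref{prop:FW_asympt_opt}) is: bound $\norm{b_k}_E\le\norm{b_k}_{\Gamma_\sigma}\le A(\sigma)(\sigma\mu)^k$ for each fixed $\sigma>1$ by the maximum principle, then let $\sigma\downarrow 1$ for the upper bound; the lower bound $\norm{b_k}_E\ge\mu_k$ comes from the Chebyshev quantity $\mu_k=\inf\{\norm{p}_E:p\text{ monic of degree }k\}$ and $\mu_k^{1/k}\to\mu$. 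Similarly, for the lower bound in \eqref{eqn:asymptotic_error_f-sn} it is cleanest to invoke the Bernstein--Walsh theorem directly (as the paper does in Proposition~\ref{prop:R(C)=1/sigma_0}), rather than re-derive it from the series. With these adjustments your argument is complete.
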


Equation \eqref{eqn:asymptotic_error_f-sn} states that the partial sums $s_n(z)$ converge maximally to $f(z)$ on $E$ (in the sense of Walsh \cite[ch.~4]{Walsh-IntApprox}).

\begin{rem}
For $\nu = 1$, i.e. for simply connected $E$, the Faber--Walsh polynomials are the (classical) Faber polynomials and the series \eqref{eqn:fw_series} is the Faber series of $f$.
For $\psi(w) = w$, i.e. $\widehat{\C} \backslash E$ is a lemniscatic domain, we have $b_n(z) = u_n(z)$ and the series \eqref{eqn:fw_series} is a series of interpolation in the points $(\alpha_j)_{j=1}^\infty$ (cf. \cite[Theorem~1]{Walsh1958}).
For $\nu = 1$ and $\psi(w) = w$, i.e. $E$ is a disk, we obtain $b_n(z) = u_n(z) = (z-a_1)^n$ and \eqref{eqn:fw_series} is the Taylor series of $f$.
\end{rem}

In the next two sections we study properties of the Faber--Walsh polynomials themselves.  The approximation of analytic functions by means of (truncated) series of Faber--Walsh polynomials will be considered in the example in the last section.

\section{Some properties of Faber--Walsh polynomials}

This section contains several properties of Faber--Walsh polynomials.  To our knowledge, most of these are not found in the previous literature.

The section is devided in three parts.  The first contains several equivalent representations of Faber--Walsh polynomials. 
The second part is devoted to simple asymptotic studies of the Faber--Walsh polynomials.  The third contains a recursion formula for the computation of the Faber--Walsh polynomials based on their definition via the generating function, cf.~\eqref{eqn:defn_bk}.

\subsection{Equivalent representations}

The following proposition collects several representations of Faber--Walsh polynomials.  All of these are generalisations of corresponding well-known properties of Faber polynomials.

\begin{prop} \label{prop:fw_representations}
Let the notation be as in theorem~\ref{thm:existence_fw_poly}, then the following hold for $n \geq 0$:
\begin{enumerate}
\item Integral representation:  Let $\sigma > 1$.  Then, for $z$ on and interior to $\Gamma_\sigma$ holds
\begin{equation}
b_n(z) = \tfrac{1}{2 \pi i} \int_{\Lambda_\lambda} u_n(\tau) \tfrac{ \psi'(\tau) }{ \psi(\tau) - z } \, d\tau
= \tfrac{1}{2 \pi i} \int_{\Gamma_\lambda} \tfrac{ u_n(\Phi(\zeta))}{ \zeta-z } \, d\zeta, \label{eqn:integral_repres_bk}
\end{equation}
for any $\sigma < \lambda < \infty$.

\item $b_n(z)$ is given by the polynomial part of the Laurent series of $u_n(\Phi(z))$ at infinity.

\item $b_n(z) = \sum\limits_{j=0}^n \beta_j z^j$ where $\beta_j = \tfrac{1}{2 \pi i} \int_{ \abs{z} = R } \tfrac{ u_n(\Phi(z)) }{ z^{j+1} } \, dz$.

\item $b_n(z)$ is the uniquely determined monic polynomial of degree $n$ such that
\begin{equation}
b_n(\psi(t)) = u_n(t) + \sum_{k=1}^\infty \tfrac{ \alpha_{nk} }{ u_k(t) }, \quad t \in K_1. \label{eqn:Entwicklung_bncircpsi}
\end{equation}
The $\alpha_{nk}$ are called the \emph{Faber--Walsh coefficients associated with $E$ and $(\alpha_j)_{j=1}^\infty$} and are given by
\begin{equation}
\alpha_{nk} = - \tfrac{1}{4 \pi^2} \int_{ \Gamma_\lambda } \int_{\Gamma_{\lambda_1}} u_{k-1}(\tau) u_n(s) \tfrac{\psi'(s)}{\psi(s)-\psi(\tau)} \, ds \, d\tau \quad (1 < \lambda_1 < \lambda < \infty). \label{eqn:FW-coefficients}
\end{equation}
\end{enumerate}
\end{prop}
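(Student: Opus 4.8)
The plan is to derive all four representations starting from the defining formula \eqref{eqn:defn_bk} in theorem~\ref{thm:existence_fw_poly}, using the substitution $\tau = \Phi(\zeta)$, $\zeta = \psi(\tau)$ to pass back and forth between the $w$-plane and the $z$-plane, together with the known fact (already asserted in theorem~\ref{thm:existence_fw_poly}) that $b_n$ is a monic polynomial of degree $n$.

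For part (1), I would apply the change of variables $\tau = \Phi(\zeta)$ to the integral over $\Lambda_\lambda$. Since $\psi = \Phi^{-1}$ maps $\Lambda_\lambda$ onto $\Gamma_\lambda$ and $d\tau = \Phi'(\zeta)\,d\zeta$, while $\psi'(\Phi(\zeta))\Phi'(\zeta) = 1$, the factor $\frac{\psi'(\tau)}{\psi(\tau)-z}\,d\tau$ becomes $\frac{d\zeta}{\zeta - z}$, and $u_n(\tau) = u_n(\Phi(\zeta))$; this gives the second integral in \eqref{eqn:integral_repres_bk}. Orientation is preserved since $\Phi$ is a conformal bijection between the exterior regions, and the contour $\Gamma_\lambda$ encloses $z$ because $z$ is on or interior to $\Gamma_\sigma$ with $\sigma < \lambda$.

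For part (2): in a neighbourhood of infinity $u_n(\Phi(\zeta))$ has a Laurent expansion; write $u_n(\Phi(\zeta)) = p_n(\zeta) + r_n(\zeta)$, where $p_n$ is the polynomial part and $r_n(\zeta) = \cO(1/\zeta)$ vanishes at infinity. Applying the Cauchy integral formula (with the contour $\Gamma_\lambda$, traversed so that $z$ is inside and $\infty$ is outside) to each piece: the polynomial part reproduces $p_n(z)$, while the integral of $r_n(\zeta)/(\zeta-z)$ over $\Gamma_\lambda$ vanishes as $\lambda \to \infty$ by a standard estimate (the integrand is $\cO(1/\lambda^2)$ on $\Gamma_\lambda$ while the length grows like $\cO(\lambda)$), and is independent of $\lambda$ by Cauchy's theorem, hence is identically zero. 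Comparing with \eqref{eqn:integral_repres_bk} yields $b_n = p_n$. Part (3) is then immediate: $\beta_j$ is the $j$-th Taylor coefficient of the polynomial $b_n$, recovered by the usual Cauchy coefficient formula on any circle $\abs{z}=R$ (with $R$ large enough that $u_n \circ \Phi$ is single-valued analytic there, so the two contributions agree as in part (2)).

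For part (4): substituting $z = \psi(t)$ in \eqref{eqn:integral_repres_bk} for $t$ exterior to $\Lambda_\sigma$, I would expand $b_n(\psi(t))$ using the generating function \eqref{eqn:defn_bk} "in reverse" — that is, expand $\frac{\psi'(\tau)}{\psi(\tau)-\psi(t)}$ in the kernel. The cleanest route is to start from \eqref{eqn:defn_bk} written as $\frac{\psi'(w)}{\psi(w)-z} = \sum_{m\ge 0} \frac{b_m(z)}{u_{m+1}(w)}$, set $z = \psi(t)$, and then integrate against $u_n$ over an appropriate contour, invoking lemma~\ref{lem:existence_alpha_n} to justify termwise integration (the bound \eqref{eqn:double_bound_un} guarantees uniform convergence on compact subsets avoiding the $a_j$). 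This produces the claimed expansion $b_n(\psi(t)) = u_n(t) + \sum_{k\ge1} \alpha_{nk}/u_k(t)$ with $\alpha_{nk}$ given by the double integral \eqref{eqn:FW-coefficients}; the leading term $u_n(t)$ comes from the principal part of $b_n(\psi(t))$ at infinity (matching part (2)), and uniqueness of a monic degree-$n$ polynomial with such an expansion follows because the difference of two such would be a polynomial of degree $< n$ whose composition with $\psi$ has only negative "powers" relative to the $u_k$, forcing it to vanish. The main obstacle I anticipate is part (4): carefully setting up the double contour integral, tracking which level curves $\Lambda_{\lambda_1} \subset \Lambda_\lambda$ (equivalently $1 < \lambda_1 < \lambda$) are needed for the two expansions to converge simultaneously, and rigorously justifying the interchange of summation and the two integrations — all the other parts are routine consequences of Cauchy's theorem once part (1) is in hand.
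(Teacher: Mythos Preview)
Your treatment of parts (1)--(3) is correct and essentially matches the paper. For (2) the paper kills the tail integral by invoking Cauchy's formula for exterior domains rather than your $\lambda\to\infty$ estimate, but both work.

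For part (4), however, your concrete proposal has a gap. The step ``start from the generating function, set $z=\psi(t)$, and integrate against $u_n$ over an appropriate contour'' is circular: integrating $u_n(w)\cdot\frac{\psi'(w)}{\psi(w)-\psi(t)}$ over $\Lambda_\lambda$ simply reproduces $b_n(\psi(t))$ by definition, and termwise integration of the right-hand side does not yield an expansion in the variable $t$ (it gives an identity in $w$-integrals that collapses back to $b_n(\psi(t))$). What is missing is a mechanism that actually produces a series of the form $\sum_k \alpha_{nk}/u_k(t)$ in the \emph{other} variable.

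The paper supplies this via two ingredients you do not mention. First, it observes that the integrand $s\mapsto u_n(s)\frac{\psi'(s)}{\psi(s)-\psi(t)}$ has a simple pole at $s=t$ with residue $u_n(t)$; splitting the contour as $\Lambda_\lambda - \Lambda_{\lambda_1}$ (with $1<\lambda_1<\sigma<\lambda$ and $t\in\Lambda_\sigma$) isolates this residue and leaves an integral $G_n(t)$ over $\Lambda_{\lambda_1}$ that is analytic for $t$ exterior to $\Lambda_{\lambda_1}$ and vanishes at infinity. Second, the expansion of $G_n(t)$ into $\sum_{k\ge1}\alpha_{nk}/u_k(t)$ is not Cauchy's theorem or the generating function --- it is Walsh's Theorem~2 (series of interpolation in the points $(\alpha_j)$), which is the analogue of a Laurent expansion adapted to lemniscatic domains. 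This theorem also furnishes the coefficient formula that, after inserting the definition of $G_n$, gives the double integral~\eqref{eqn:FW-coefficients}. Your sketch correctly anticipates that two nested contours with $1<\lambda_1<\lambda$ are needed, but the residue extraction and the appeal to Walsh's interpolation-series theorem are the ideas that make the argument go through.
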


For Faber polynomials, properties 1., 2. and 4. can be found e.g. in \cite{Curtiss1971}, property 3. in \cite[(2.3) and (2.4)]{Ellacott1983}.

\begin{proof}
\begin{enumerate}
\item See \cite[p.~30]{Walsh1958}.

\item By \eqref{eqn:def_Phi}, $\Phi(\zeta) = \zeta + d_0 + \sum_{k=1}^\infty \frac{d_k}{\zeta^k}$ in a neighbourhood of $\zeta = \infty$ and the series converges locally uniformly.  Thus also the Laurent series of
\begin{equation*}
u_n(\Phi(\zeta)) = \prod_{j=1}^n (\Phi(\zeta)-\alpha_j) = p_n(\zeta) + \sum_{k=1}^\infty \tfrac{\widetilde{d}_k}{\zeta^k}
\end{equation*}
converges locally uniformly in a neighbourhood of infinity.  Here $p_n(\zeta)$ is a monic polynomial of degree $n$.  Now, let $z$ in some $\Gamma_\sigma$.  For sufficiently large $\lambda > \sigma$ the integral representation \eqref{eqn:integral_repres_bk} of $b_n(z)$ shows
\begin{equation*}
b_n(z) = \tfrac{1}{2 \pi i} \int_{\Gamma_\lambda} \tfrac{u_n(\Phi(\zeta))}{\zeta-z} \, d\zeta = \tfrac{1}{2 \pi i} \int_{\Gamma_\lambda} \tfrac{p_n(\zeta)}{\zeta-z} \, d\zeta + \tfrac{1}{2 \pi i} \int_{\Gamma_\lambda} \sum_{k=1}^\infty \tfrac{\widetilde{d}_k}{\zeta^k} \tfrac{1}{\zeta-z} \, d\zeta = p_n(z).
\end{equation*}
The second integral vanishes by virtue of Cauchy's integral formula for domains with infinity as interior point (cf. e.g. \cite[Problem~14.14]{Markushevich1965} or \cite[p.~335]{LawrentjewSchabat1967}).

\item Follows from 2. by Cauchy's integral formula.

\item We start with a general observation.  If $P_n(z)$ is a polynomial of (exact) degree $n$ such that
\begin{equation*}
P_n(\psi(t)) = u_n(t) + \sum_{k=1}^\infty \tfrac{\beta_{nk}}{u_k(t)}, \quad t \in K_1,
\end{equation*}
for some $\beta_{nk} \in \C$, then the coefficients of $P_n(z)$ are uniquely determined by $\alpha_1, \ldots, \alpha_n$ and the coefficients $c_0, c_1, \ldots, c_n$ of $\psi$, $\psi(t) = t + c_0 + \sum_{k=1}^\infty \tfrac{c_k}{t^k}$.  To see this, consider the expansion of $P_n \circ \psi$ around infinity and equate the coefficients of $t^n, t^{n-1}, \ldots, t^0$.

We show that the Faber--Walsh polynomials associated with $E$ and $(\alpha_j)_{j=1}^\infty$ admit such an expansion.
Let $\sigma > 1$ and $t \in \Lambda_\sigma$, $z = \psi(t) \in \Gamma_\sigma$.  Let $\lambda_1, \lambda$ with $1 < \lambda_1 < \sigma < \lambda$.  The function $s \mapsto u_n(s) \tfrac{\psi'(s)}{\psi(s)-\psi(t)}$ is analytic in $\{ w \in \C : \lambda_1 \mu \leq \abs{U(w)} \leq \lambda \mu \}$ except at $s = t \in \Lambda_\sigma$, where it has a simple pole with residue $u_n(t)$.
Indeed,
\begin{equation*}
\lim\limits_{s \to t} (s-t) u_n(s) \tfrac{\psi'(s)}{\psi(s)-\psi(t)} = \lim\limits_{s \to t} u_n(s) \tfrac{\psi'(s)}{ (\psi(s)-\psi(t))/(s-t) } = u_n(t).
\end{equation*}
Hence, by \eqref{eqn:defn_bk},
\begin{align*}
b_n(\psi(t)) &= \tfrac{1}{2 \pi i} \int_{\Gamma_\lambda} u_n(s) \tfrac{\psi'(s)}{\psi(s)-\psi(t)} \, ds
= \tfrac{1}{2 \pi i} \int_{\Gamma_\lambda - \Gamma_{\lambda_1} + \Gamma_{\lambda_1}} u_n(s) \tfrac{\psi'(s)}{\psi(s)-\psi(t)} \, ds \\
&= \tfrac{1}{2 \pi i} \int_{\Gamma_\lambda - \Gamma_{\lambda_1}} u_n(s) \tfrac{\psi'(s)}{\psi(s)-\psi(t)} \, ds + \tfrac{1}{2 \pi i} \int_{\Gamma_{\lambda_1}} u_n(s) \tfrac{\psi'(s)}{\psi(s)-\psi(t)} \, ds \\
&= u_n(t) + \underbrace{ \tfrac{1}{2 \pi i} \int_{\Gamma_{\lambda_1}} u_n(s) \tfrac{\psi'(s)}{\psi(s)-\psi(t)} \, ds }_{ \eqcolon G_n(t) }.
\end{align*}
The function $G_n(t)$ is analytic in $D_{\lambda_1}^\infty$. By \cite[Theorem~2]{Walsh1958}, it can be expanded into a series of interpolation of the form
\begin{equation}
G_n(t) = \sum\limits_{k=0}^\infty \tfrac{\alpha_{nk}}{u_k(t)}, \quad \alpha_{nk} = \tfrac{1}{2 \pi i} \int_{ \Gamma_{\lambda'} } u_{k-1}(\tau) G_n(\tau) \, d\tau \quad (\lambda_1 < \lambda' < \infty). \label{eqn:expansion_G_n}
\end{equation}
From $\psi(\infty) = \infty$ we have $G_n(\infty) = 0$ and thus $\alpha_{n,0} = 0$.  For $k \geq 1$, plug $G_n(t)$ in the representation \eqref{eqn:expansion_G_n} of $\alpha_{nk}$ to obtain \eqref{eqn:FW-coefficients}.  This shows that $b_n \circ \psi$ has the form \eqref{eqn:Entwicklung_bncircpsi}.  Note that $1 < \lambda_1 < \sigma$ is arbitrary.  By \cite[Theorem~2]{Walsh1958}, the series in \eqref{eqn:Entwicklung_bncircpsi} converges in $D_1^\infty$ to $b_n \circ \psi$, uniformly on any $D_{\lambda_1}^\infty$, $\lambda_1 > 1$.
\end{enumerate}
\end{proof}

The Faber--Walsh coefficients generalise the \emph{Faber coefficients} of a compact set $E$ whose complement is simply connected (cf. e.g. \cite{Curtiss1971}).  Note that the Faber--Walsh coefficients depend on the choice of the sequence $(\alpha_j)_{j=1}^\infty$ (via the polynomials $u_n(z)$) but are independent of the choice of the lemniscatic domain and the exterior mapping function.  This can be seen as in the proof of lemma~\ref{lem:fw_polys_independent_of_K1_psi}.

\subsection{Some simple asymptotics}

We now turn to some simple asymptotic properties of the Faber--Walsh polynomials.
Let us begin by deriving a representation (formula~\eqref{eqn:asymptotics_of_bn} below) which turns out to be quite useful in the sequel (cf.~propositions~\ref{prop:bn_two-sided_inequality} and \ref{prop:fw_poly_quotient}).  The proof follows a similar proof for Faber polynomials from \cite[pp.~134-135]{SmirnovLebedev}.  Some of these ideas already appear in \cite{Walsh1958} and we adopt Walsh's notation.

Let the notation be as in theorems~\ref{thm:existence_ext_mapping} and \ref{thm:existence_fw_poly}.  Consider the generating function of the Faber--Walsh polynomials.  Note that for $z \in K = \widehat{\C} \backslash E$ there is exactly one $\tau \in K_1$ with $\Phi(z) = \tau$, $z = \psi(\tau)$.
The function $\tfrac{ \psi'(t) }{ \psi(t) - \psi(\tau)}$ of $t, \tau \in K_1$ is analytic in both variables, except for $t = \tau$.  The function has a simple pole at these points. Viewed as a function of $t$, the residue at $t = \tau$ is $1$.
Hence
\begin{equation}
\tfrac{ \psi'(t) }{ \psi(t)-\psi(\tau) } = \tfrac{1}{t-\tau} + \Psi(t,\tau), \label{eqn:defn_Psi}
\end{equation}
where $\Psi(t,\tau)$ is a function analytic for $t$ and $\tau$ in $K_1$.

By \cite[Theorem~2]{Walsh1958}, the function $t \mapsto \Psi(t,\tau)$, analytic in $K_1$, has an expansion
\begin{equation}
\Psi(t,\tau) = \sum_{k=0}^\infty \tfrac{ e_{k-1}(\tau) }{ u_k(t) }, \quad e_{k-1}(\tau) = \tfrac{1}{2 \pi i} \int_{ \abs{U(t)} = \lambda \mu } u_{k-1}(t) \Psi(t,\tau) \, dt, \quad 1 < \lambda < \infty. \label{eqn:expansion_Psi}
\end{equation}
Here $u_{-1}(t) = (t-\alpha_1)^{-1}$.  The series converges uniformly (for fixed $\tau$) on any $D_\sigma^\infty = \{ w : \abs{U(w)} > \sigma \mu \}$, $\sigma > 1$.  Note that the $e_{k-1}(\tau)$ are analytic in $K_1$ (see e.g. \cite[Cor.~5.6.2]{Wegert2012}).
For any fixed $\tau \in K_1$, $\tau \neq \infty$, and $t = \infty$, equations \eqref{eqn:defn_Psi} and \eqref{eqn:expansion_Psi} yield $0 = e_{-1}(\tau)$, so that $e_{-1}$ vanishes identically.
Now, $\tau = \infty$ yields $0 = \sum_{k=1}^\infty \tfrac{ e_{k-1}(\infty) }{ u_k(t) }$.  Since the series is uniformly convergent, all coefficients vanish.

Let $1 < \sigma < \lambda < \infty$ and $\tau \in \Lambda_\sigma$ and $t \in \Lambda_\lambda$. Then, $\tfrac{1}{t-\tau} = \sum_{k=0}^\infty \tfrac{u_k(\tau)}{u_{k+1}(t)}$, as the partial sums of this series are the Hermite-interpolation polynomials to $\tfrac{1}{t-\tau}$ in the points $(\alpha_j)_{j=1}^\infty$, see \cite{Walsh1958}. Thus, \eqref{eqn:defn_Psi} becomes
\begin{equation*}
\sum\limits_{k=0}^\infty \tfrac{ b_k(z) }{ u_{k+1}(t) } = \tfrac{\psi'(t)}{\psi(t)-\psi(\tau)} = \tfrac{1}{t-\tau} + \Psi(t,\tau) = \sum\limits_{k=0}^\infty \tfrac{u_k(\tau)}{ u_{k+1}(t) } + \sum\limits_{k=0}^\infty \tfrac{e_k(\tau)}{u_{k+1}(t)}.
\end{equation*}
For fixed $\tau \in \Lambda_\sigma$, all three series converge uniformly in $t$ in $\abs{U(t)} > \sigma \mu$. From this we conclude $b_k(z) = u_k(\tau) + e_k(\tau)$ for any $\tau \in K_1$, $\tau \neq \infty$, by the uniqueness of the coefficients of such series. The equality extends to the point at infinity.

We estimate the coefficient functions $e_n$.  
Note that for $\tau \in \Lambda_\lambda$, i.e. with $\abs{U(\tau)} = \lambda \mu$, we have $\abs{u_n(\tau)} \leq A_2(\Lambda_\lambda) \abs{U(w)}^n = A_2(\Lambda_\lambda) (\lambda \mu)^n$ by virtue of lemma~\ref{lem:existence_alpha_n}.  Then \eqref{eqn:expansion_Psi} implies
\begin{equation*}
\abs{ e_n(\tau) } \leq \tfrac{ L(\Lambda_\lambda) }{ 2 \pi } \max\limits_{ t \in \Lambda_\lambda } \abs{u_n(t)} \max\limits_{t \in \Lambda_\lambda} \abs{\Psi(t,\tau)}
\leq \underbrace{ \tfrac{ L(\Lambda_\lambda) }{ 2 \pi } A_2(\Lambda_\lambda) \max\limits_{(t,\tau) \in \Lambda_\lambda \times \Lambda_\lambda} \abs{\Psi(t,\tau)} }_{ \eqcolon G(\lambda) } (\lambda \mu)^n,
\end{equation*}
where $L(\Lambda_\lambda)$ denotes the length of the curve $\Lambda_\lambda$.  Note that $G(\lambda) < \infty$, since $\Psi(t, \tau)$ is regular for $t, \tau \in K_1$.
An estimate of $G(\lambda)$ for simply connected $E$ is derived in \cite[p.~135]{SmirnovLebedev}.  Since $e_n$ is regular in $\abs{U(\tau)} \geq \lambda \mu$, the maximum modulus principle implies
\begin{equation*}
\abs{ e_n(\tau) } \leq G(\lambda) (\lambda \mu)^n, \quad \text{for all } \tau \text{ with } \abs{U(\tau)} \geq \lambda \mu.
\end{equation*}
Then, for any $z \in K$ and any $\lambda > 1$ holds
\begin{equation}
b_n(z) = u_n(\tau) + e_n(\tau)
= u_n(\tau) \Big( 1 + \theta_n(z) G(\lambda) \tfrac{ (\lambda \mu)^n }{ \abs{u_n(\tau)} } \Big), \label{eqn:asymptotics_of_bn}
\end{equation}
where
\begin{equation}
\theta_n(z) \coloneq \tfrac{ \abs{u_n(\tau)} }{ u_n(\tau) } e_n(\tau) \tfrac{1}{ G(\lambda) (\lambda \mu)^n }. \label{eqn:def_theta_n}
\end{equation}
Note that $\abs{\theta_n(z)} \leq 1$ for $\abs{U(\tau)} \geq \lambda \mu$, $\tau = \Phi(z)$.

We apply equation~\eqref{eqn:asymptotics_of_bn} to study asymptotics of Faber--Walsh polynomials.

\begin{prop} \label{prop:bn_two-sided_inequality}
Let the notation be as in theorem~\ref{thm:existence_fw_poly}.  For $\sigma > 1$ there exist constants $C_1, C_2 > 0$ such that for any $t$ on or exterior to $\Lambda_\sigma$ the following inequalities hold:
\begin{equation*}
\begin{aligned}
\abs{b_n(\psi(t))} &< C_2 \, \abs{ u_n(t) } && \text{ for all } n, \\
\abs{b_n(\psi(t))} &> C_1 \, \abs{u_n(t)}  && \text{ for all sufficiently large } n.
\end{aligned}
\end{equation*}
\end{prop}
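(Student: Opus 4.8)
The plan is to obtain both estimates directly from the representation~\eqref{eqn:asymptotics_of_bn}, combined with the two-sided bound on $u_n$ furnished by lemma~\ref{lem:existence_alpha_n}. If $\psi$ is the identity (equivalently $\Psi \equiv 0$, so that $G(\lambda) = 0$ and $e_n \equiv 0$ for all $n$), then $b_n \circ \psi = u_n$ and there is nothing to prove; so I assume $G(\lambda) > 0$.

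First I would fix a number $\lambda$ with $1 < \lambda < \sigma$. For $t$ finite, on or exterior to $\Lambda_\sigma$, one has $\abs{U(t)} \geq \sigma\mu > \lambda\mu$, so~\eqref{eqn:asymptotics_of_bn} applied to $z = \psi(t)$ (so that $\tau = \Phi(\psi(t)) = t$) reads
\begin{equation*}
b_n(\psi(t)) = u_n(t)\Big( 1 + \theta_n(\psi(t))\, G(\lambda)\, \tfrac{(\lambda\mu)^n}{\abs{u_n(t)}} \Big), \qquad \abs{\theta_n(\psi(t))} \leq 1 .
\end{equation*}

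The key remaining step is to control the factor $(\lambda\mu)^n / \abs{u_n(t)}$ uniformly in $t$. For this I would apply lemma~\ref{lem:existence_alpha_n} to the closed set $C \coloneq \{ w \in \widehat{\C} : \abs{U(w)} \geq \sigma\mu \}$, which contains none of the foci $a_1, \dots, a_\nu$ (there $U$ vanishes). It yields a constant $A_1 = A_1(C) > 0$ with $\abs{u_n(t)} > A_1 \abs{U(t)}^n \geq A_1 (\sigma\mu)^n$ for all $n$ and all $t \in C$. Hence $(\lambda\mu)^n/\abs{u_n(t)} < A_1^{-1}(\lambda/\sigma)^n$, which is at most $A_1^{-1}$ for every $n \geq 0$ and tends to $0$ as $n \to \infty$ at a rate independent of $t$, since $\lambda < \sigma$.

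Both inequalities then follow from the displayed identity. The triangle inequality gives $\abs{b_n(\psi(t))} \leq \big(1 + G(\lambda) A_1^{-1}\big)\abs{u_n(t)}$ for all $n$, so $C_2 \coloneq 1 + G(\lambda) A_1^{-1}$ works; for the lower bound I would choose $N$ so large that $G(\lambda) A_1^{-1}(\lambda/\sigma)^n \leq \tfrac12$ for all $n \geq N$ — possible since $\lambda/\sigma < 1$, and $N$ depends only on $\sigma$, not on $t$ — whence the reverse triangle inequality gives $\abs{b_n(\psi(t))} \geq \tfrac12 \abs{u_n(t)}$ for $n \geq N$, so $C_1 \coloneq \tfrac12$ works for all sufficiently large $n$. (The case $t = \infty$ is trivial, since $b_n(\psi(t))/u_n(t) \to 1$ there.) There is no real obstacle: the one point deserving attention is the choice of $\lambda$ strictly between $1$ and $\sigma$, which simultaneously makes $\abs{\theta_n} \leq 1$ valid on all of $\{\abs{U(t)} \geq \sigma\mu\}$ and produces the geometrically decaying factor $(\lambda/\sigma)^n$; and the observation that $G(\lambda)$, $A_1(C)$ and the threshold $N$ are all independent of $t$, which is exactly what makes $C_1$ and $C_2$ uniform over $\Lambda_\sigma$ and its exterior.
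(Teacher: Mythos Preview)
Your proof is correct and follows essentially the same route as the paper's: fix $1 < \lambda < \sigma$, invoke~\eqref{eqn:asymptotics_of_bn} together with $\abs{\theta_n} \leq 1$, and use lemma~\ref{lem:existence_alpha_n} on $\{\abs{U(w)} \geq \sigma\mu\}$ to obtain the uniform geometric decay $(\lambda\mu)^n/\abs{u_n(t)} < A_1^{-1}(\lambda/\sigma)^n$. The paper leaves the extraction of explicit $C_1$, $C_2$ implicit, whereas you spell them out (and handle $t = \infty$ and $\psi = \id$ separately), but the substance is identical.
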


\begin{proof}
Let $\sigma > 1$ and $1 < \lambda < \sigma$.  For $t$ on or exterior to $\Lambda_\sigma$ we have $\abs{U(t)} \geq \sigma \mu > \lambda \mu$, so that $\abs{\theta_n(z)} \leq 1$ for $\theta_n$ from \eqref{eqn:def_theta_n} and $z = \psi(t)$.  By lemma~\ref{lem:existence_alpha_n} there exists $A_1 > 0$ such that $A_1 < \tfrac{ \abs{u_n(\tau)} }{ \abs{U(\tau)}^n }$ for all $\tau$ with $\abs{U(\tau)} \geq \sigma \mu$ and all $n$.  This estimate implies $\tfrac{1}{ \abs{u_n(\tau)} } < \tfrac{1}{A_1} \tfrac{1}{ \abs{U(\tau)}^n } \leq \tfrac{1}{A_1} \tfrac{1}{ (\sigma \mu)^n }$.  Hence
\begin{equation*}
\abs{ \theta_n(z) G(\lambda) \tfrac{ (\lambda \mu)^n }{ \abs{u_n(\tau)} } } \leq \tfrac{ G(\lambda) }{A_1} \big( \tfrac{\lambda \mu}{\sigma \mu} \big)^n \to 0.
\end{equation*}
Now the proposition follows from \eqref{eqn:asymptotics_of_bn}.
\end{proof}

In \cite[p.~29]{Walsh1958} a bound $\abs{b_n(z)} < A_1 (\sigma \mu)^n$ for $z \in \Gamma_\sigma$, $\sigma > 1$, is shown ($A_1 > 0$ being some constant).  In \cite[p.~253]{Suetin1998} a two-sided inequality as in proposition~\ref{prop:bn_two-sided_inequality} appears, but it is not noted that the lower bound holds only for sufficiently large $n$.  Indeed, the lower bound is always positive outside $E$, but some $b_n(z)$ can have a zero outside $E$.  Examples are known for $\nu = 1$, i.e. for Faber polynomials, see \cite{Goodman1975} and \cite{Liesen2000}.

As a corollary, for given $\sigma > 1$, only finitely many Faber--Walsh polynomials $b_n(z)$ can have zeros on or outside $\Gamma_\sigma$ (cf. \cite{Walsh1958}).

We end this section by a proposition showing that the exterior mapping function can be ``recovered'' from the Faber--Walsh polynomials (and the sequence $(\alpha_j)_{j=1}^\infty$). This is a generalisation of the following property of Faber polynomials: The quotient of two succeeding Faber polynomials for the compact set $E$, $\tfrac{F_{n+1}(z)}{F_n(z)}$, converges locally uniformly in $K = \widehat{\C} \backslash E$ to the exterior mapping function, see for instance \cite{Curtiss1971}, \cite[pp.~134-135]{SmirnovLebedev} or \cite[p.~43]{Suetin1998}.

\begin{prop} \label{prop:fw_poly_quotient}
In the notation of theorems~\ref{thm:existence_ext_mapping} and \ref{thm:existence_fw_poly}, the Faber--Walsh polynomials $b_n(z)$ associated with $E$ and $(\alpha_j)_{j=1}^\infty$ satisfy
\begin{equation*}
\tfrac{ b_{n+1}(z) }{ b_n(z) } + \alpha_{n+1} \to \Phi(z), \quad n \to \infty,
\end{equation*}
locally uniformly in $\widehat{\C} \backslash E$.
\end{prop}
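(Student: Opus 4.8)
The plan is to read everything off the asymptotic identity~\eqref{eqn:asymptotics_of_bn}, which already displays $b_n(z)$ as $u_n(\tau)$ (with $\tau = \Phi(z)$) times a factor tending to $1$. Since $u_{n+1}(\tau) = u_n(\tau)(\tau-\alpha_{n+1})$, the quotient $b_{n+1}(z)/b_n(z)$ is, up to a vanishing perturbation, equal to $\tau-\alpha_{n+1}$, so that adding $\alpha_{n+1}$ recovers $\Phi(z)$.

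First I would fix a compact set $C \subset \widehat{\C}\setminus E$ and, treating a neighbourhood of $\infty$ separately (see below), assume $\infty\notin C$. Then $C' \coloneq \Phi(C)$ is a compact subset of the lemniscatic domain $K_1$, so $\sigma_0 \coloneq \min_{w\in C'}\abs{U(w)}/\mu > 1$ and $C'$ contains none of the foci $a_1,\dots,a_\nu$ (hence none of the $\alpha_j$). Fix $\lambda$ with $1 < \lambda < \sigma_0$. By lemma~\ref{lem:existence_alpha_n} applied to the closed set $C'$ there is a constant $A_1 > 0$ with $\abs{u_n(w)} \geq A_1\abs{U(w)}^n \geq A_1(\sigma_0\mu)^n$ for all $w\in C'$ and all $n\geq 0$; in particular $u_n(\tau)\neq 0$ for $\tau\in C'$.

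Next, for $z\in C$ and $\tau = \Phi(z)$ we have $\abs{U(\tau)}\geq\sigma_0\mu>\lambda\mu$, so $\abs{\theta_n(z)}\leq 1$ for $\theta_n$ from~\eqref{eqn:def_theta_n}, and \eqref{eqn:asymptotics_of_bn} reads $b_n(z) = u_n(\tau)\bigl(1+\varepsilon_n(z)\bigr)$ with $\varepsilon_n(z)\coloneq \theta_n(z)G(\lambda)(\lambda\mu)^n/\abs{u_n(\tau)}$, whence $\abs{\varepsilon_n(z)}\leq \tfrac{G(\lambda)}{A_1}(\lambda/\sigma_0)^n \eqcolon \delta_n$ with $\delta_n\to 0$ uniformly on $C$. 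For $n$ so large that $\delta_n<1$ this forces $b_n(z)\neq 0$ throughout $C$, so the quotient is defined there, and using $u_{n+1}(\tau)=u_n(\tau)(\tau-\alpha_{n+1})$,
\begin{equation*}
\frac{b_{n+1}(z)}{b_n(z)} + \alpha_{n+1} - \Phi(z) = (\tau-\alpha_{n+1})\Bigl( \frac{1+\varepsilon_{n+1}(z)}{1+\varepsilon_n(z)} - 1 \Bigr).
\end{equation*}
The bracketed factor has modulus at most $(\delta_n+\delta_{n+1})/(1-\delta_n)\to 0$ uniformly on $C$, while $\abs{\tau-\alpha_{n+1}}\leq \max_{w\in C'}\abs{w} + \max_j\abs{a_j}$ is a bound independent of $n$ (recall $\alpha_{n+1}$ is one of finitely many foci). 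Hence the left-hand side tends to $0$ uniformly on $C$, which is the assertion. For a compact set containing $\infty$ one argues in the same way after passing to the reciprocals $b_n/b_{n+1}$ and $1/\Phi$, which both vanish at $\infty$ (both $b_n$ being monic and $\Phi(z)=z+\cO(1)$ near $\infty$).

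I do not expect a genuine obstacle: formula~\eqref{eqn:asymptotics_of_bn} does the heavy lifting and the rest is bookkeeping. The points that need a little care are (i) guaranteeing $b_n(z)\neq 0$ on the fixed compact set for all large $n$ so that the quotient makes sense, (ii) keeping all the $\varepsilon_n$-estimates uniform in $z$, and (iii) the slightly delicate meaning of ``locally uniformly'' at $z=\infty$, which is why I would peel that case off at the end.
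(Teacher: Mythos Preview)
Your argument is correct and follows essentially the same route as the paper: both insert~\eqref{eqn:asymptotics_of_bn} into the quotient, use lemma~\ref{lem:existence_alpha_n} to control the perturbation terms uniformly, and bound the resulting product. The only cosmetic difference is that the paper absorbs the factor $|\tau-\alpha_{n+1}|$ into the adjacent $u_n$-terms (writing $|\tau-\alpha_{n+1}|\cdot(\lambda\mu)^{n+1}/|u_{n+1}(\tau)| = (\lambda\mu)^{n+1}/|u_n(\tau)|$, etc.), which yields a bound that is already uniform on all of $\{\,|U(\tau)|\geq\sigma\mu\,\}$ including $\tau=\infty$, so no separate treatment of the point at infinity is required.
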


\begin{proof}
Let $1 < \lambda < \sigma < \infty$, $\tau$ with $\abs{U(\tau)} \geq \sigma \mu$ and $z = \psi(\tau)$.  Then, using \eqref{eqn:asymptotics_of_bn},
\begin{align}
\abs{ \tfrac{ b_{n+1}(z)}{b_n(z)} + \alpha_{n+1} - \Phi(z) }
&= \abslr{ \tfrac{ u_{n+1}(\tau) }{ u_n(\tau) } \tfrac{ 1 + \theta_{n+1}(z) G(\lambda) \tfrac{ (\lambda \mu)^{n+1} }{ \abs{u_{n+1}(\tau)} } }{ 1 + \theta_n(z) G(\lambda) \tfrac{ (\lambda \mu)^n }{ \abs{u_n(\tau)} } } - (\tau-\alpha_{n+1}) } \nonumber \\
&= \abs{\tau-\alpha_{n+1}} \abslr{ \tfrac{ \theta_{n+1}(z) G(\lambda) \frac{ (\lambda \mu)^{n+1} }{\abs{u_{n+1}(\tau)}} - \theta_n(z) G(\lambda) \frac{(\lambda \mu)^n}{ \abs{u_n(\tau)} }}{1 + \theta_n(z) G(\lambda) \frac{(\lambda \mu)^n}{ \abs{u_n(\tau)} } } } \nonumber \\
&\leq \abs{\tau-\alpha_{n+1}} \tfrac{ G(\lambda) \frac{ (\lambda \mu)^{n+1} }{\abs{u_{n+1}(\tau)}} + G(\lambda) \frac{(\lambda \mu)^n}{ \abs{u_n(\tau)} }}{1 - G(\lambda) \frac{(\lambda \mu)^n}{ \abs{u_n(\tau)} } }. \label{eqn:bound_bkp1/bk+alphak-Phi}
\end{align}
Here we used that $\theta_n(z)$ from \eqref{eqn:def_theta_n} satisfies $\abs{\theta_n(z)} \leq 1$, since $\abs{U(\tau)} \geq \lambda \mu$.
Now, by lemma~\ref{lem:existence_alpha_n} there exists $A_1 > 0$ such that $A_1 < \tfrac{ \abs{u_n(\tau)} }{ \abs{U(\tau)}^n }$ for all $\tau$ with $\abs{U(\tau)} \geq \sigma \mu$ and all $n$.  This estimate implies $\tfrac{1}{ \abs{u_n(\tau)} } < \tfrac{1}{A_1} \tfrac{1}{ \abs{U(\tau)}^n } \leq \tfrac{1}{A_1} \tfrac{1}{ (\sigma \mu)^n }$.
Therefore \eqref{eqn:bound_bkp1/bk+alphak-Phi} yields
\begin{equation*}
\abs{ \tfrac{ b_{n+1}(z)}{b_n(z)} + \alpha_{n+1} - \Phi(z) }
\leq 
\tfrac{ G(\lambda) }{ 1 - G(\lambda) \frac{1}{A_1} ( \frac{\lambda \mu}{\sigma \mu} )^n }
\abs{\tau-\alpha_{n+1}} \Big( \tfrac{ (\lambda \mu)^{n+1} }{\abs{u_{n+1}(\tau)}} + \tfrac{(\lambda \mu)^n}{ \abs{u_n(\tau)} } \Big).
\end{equation*}
Note that
\begin{equation*}
\abs{\tau-\alpha_{n+1}} \Big( \tfrac{ (\lambda \mu)^{n+1} }{\abs{u_{n+1}(\tau)}} + \tfrac{(\lambda \mu)^n}{ \abs{u_n(\tau)} } \Big) = \tfrac{ (\lambda \mu)^{n+1} }{ \abs{u_n(\tau)} } + \tfrac{ (\lambda \mu)^n }{ \abs{u_{n-1}(\tau)} } \tfrac{ \abs{\tau-\alpha_{n+1}} }{ \abs{\tau-\alpha_n} },
\end{equation*}
and that $\tfrac{ \abs{\tau-\alpha_{n+1}} }{ \abs{\tau-\alpha_n} }$ is globally bounded on $\{ \tau \in \widehat{\C} : \abs{U(\tau)} \geq \sigma \mu \}$, say by $C > 0$.  Thus
\begin{equation*}
\abs{ \tfrac{ b_{n+1}(z)}{b_n(z)} + \alpha_{n+1} - \Phi(z) }
\leq 
\tfrac{ G(\lambda) }{ 1 - G(\lambda) \frac{1}{A_1} ( \frac{\lambda \mu}{\sigma \mu} )^n } \tfrac{1+C}{A_1} \tfrac{ (\lambda \mu)^n }{ (\sigma \mu)^{n-1} }.
\end{equation*}
This shows that $\tfrac{ b_{n+1}(z) }{ b_n(z) } + \alpha_{n+1}$ converges uniformly to $\Phi(z)$ on $\psi( \{ \tau \in \widehat{\C} : \abs{U(\tau)} \geq \sigma \mu \} )$, for any $\sigma > 1$.  In particular, the convergence is locally uniformly for $z$ in $\widehat{\C} \backslash E$.
\end{proof}

\subsection{Recursion formula and computation of Faber--Walsh polynomials} \label{sect:recursion}

Walsh showed in \cite{Walsh1958} that the coefficients $b_n(z)$ in \eqref{eqn:defn_bk} are indeed (monic) polynomials of degree $n$.  With little extra effort, the proof yields also an algorithm for the computation of $b_0(z), b_1(z), \ldots$ from the sequence $(\alpha_j)_{j=1}^\infty$ and the coefficients of the Laurent series of $\psi$ at infinity.  For convenience and to set notation, we briefly recall Walsh's proof while deriving this algorithm.

The main ingredient is the analysis of the function $N_n(\tau,z)$ (notation as in \cite{Walsh1958}) defined by
\begin{equation}
\tfrac{ \psi'(\tau) }{ \psi(\tau) - z } - \tfrac{ b_0 (z) }{ u_1(\tau) } - \tfrac{ b_1 (z) }{ u_2(\tau) } - \ldots - \tfrac{ b_n(z) }{ u_{n+1}(\tau) } = \tfrac{ N_n (\tau,z) }{ (\psi(\tau)-z) (\tau-\alpha_1) (\tau-\alpha_2) \ldots (\tau-\alpha_{n+1} ) }, \label{eqn:definition_N_n_tau_z}
\end{equation}
where $\tau \in D_\sigma^\infty$ und $z \in \Gamma_\sigma$.  By \eqref{eqn:defn_bk}, $\tau = \infty$ is a zero of order at least $n+2$ of the right hand side of \eqref{eqn:definition_N_n_tau_z}.  Therefore, if $N_n(\tau,z)$ is expanded into a series with respect to $\tau$, all coefficients of $\tau^k$, $k \geq 1$, vanish.  Let
\begin{equation}
\psi(\tau) = \tau + c_0 + \sum_{k=1}^\infty \tfrac{c_k}{\tau^k} \label{eqn:coeff_of_psi}
\end{equation}
in a neighbourhood of infinity (recall \eqref{eqn:def_psi}).

We show the following lemma, which is essentially due to Walsh \cite{Walsh1958}.

\begin{lem}
With $N_n(\tau,z)$ from \eqref{eqn:definition_N_n_tau_z} and $\psi$ as in \eqref{eqn:coeff_of_psi}, we have for all $n \geq 0$
\begin{enumerate}
\item $b_n(z)$ is a monic polynomial in $z$ of degree $n$.

\item $N_n(\tau,z)$ is a monic polynomial in $z$ of degree $n+1$.  With respect to $\tau$, $N_n(\tau,z)$ satisfies
\begin{equation}
N_n(\tau,z) = \beta_0^{(n)}(z) \tau^0 + \sum\limits_{k=1}^\infty \tfrac{\beta_k^{(n)}(z)}{\tau^k}, \label{eqn:Series_N_n_tau_z}
\end{equation}
where $\beta_k^{(n)}(z)$, $k \geq 0$, are polynomials in $z$. $\beta_0^{(n)}(z)$ is monic of degree $n+1$, whereas the $\beta_k^{(n)}(z)$, $k \geq 1$, have degree at most $n$.

\item The polynomials $b_0(z)$, $b_1(z)$, $\ldots$, $b_n(z)$ are chosen in such a way that the coefficients of $\tau^k$, $k \geq 1$, in $N_n(\tau,z)$ vanish.
\end{enumerate}
\end{lem}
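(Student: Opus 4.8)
The plan is to establish all three assertions simultaneously by induction on $n$, driven by a one-step recursion for $N_n(\tau,z)$. As preparation I would clear denominators in \eqref{eqn:definition_N_n_tau_z} to get
\[
N_n(\tau,z) = u_{n+1}(\tau)\psi'(\tau) - (\psi(\tau)-z)\sum_{k=0}^{n} b_k(z)\,\tfrac{u_{n+1}(\tau)}{u_{k+1}(\tau)},
\]
noting that $\tfrac{u_{n+1}(\tau)}{u_{k+1}(\tau)} = \prod_{j=k+2}^{n+1}(\tau-\alpha_j)$ is a monic polynomial in $\tau$ of degree $n-k$ (and carries no $z$). For fixed $z$ this exhibits $N_n(\cdot,z)$ as analytic for large $\abs{\tau}$ with a Laurent expansion at $\infty$ having at most $n+1$ positive powers of $\tau$; for fixed $\tau$ it exhibits $N_n(\tau,\cdot)$ as a polynomial in $z$, the summands having $z$-degrees $1+\deg b_k$, so that once the induction supplies $\deg b_k = k$ for $k\le n$ the top term is the $k=n$ one, $(z-\psi(\tau))b_n(z)$, making $N_n(\tau,\cdot)$ monic of degree $n+1$. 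Subtracting the analogue of the displayed identity for $n-1$ and using $u_{n+1}(\tau)=(\tau-\alpha_{n+1})u_n(\tau)$ yields the recursion
\[
N_n(\tau,z) = (\tau-\alpha_{n+1})\,N_{n-1}(\tau,z) - (\psi(\tau)-z)\,b_n(z), \qquad n\ge 1,
\]
which is the core of the argument.

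With this in hand the induction is mechanical. The base case $n=0$ reads $N_0(\tau,z) = (\tau-\alpha_1)\psi'(\tau) - (\psi(\tau)-z)b_0(z)$; expanding via \eqref{eqn:coeff_of_psi}, so that $\psi(\tau)=\tau+c_0+\cO(\tfrac1\tau)$ and $\psi'(\tau)=1+\cO(\tfrac1{\tau^2})$, the coefficient of $\tau^1$ in $N_0$ is $1-b_0(z)$, which must vanish since (as observed just before the lemma, using \eqref{eqn:defn_bk}) $\tau=\infty$ is a zero of order $\ge n+2$ of the right-hand side of \eqref{eqn:definition_N_n_tau_z}, so $N_n$ carries no positive powers of $\tau$. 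Hence $b_0\equiv1$ is monic of degree $0$, and the remaining terms give $\beta_0^{(0)}(z)=z-c_0-\alpha_1$ (monic of degree $1$) and $\beta_k^{(0)}$ constant for $k\ge1$. For the step $n-1\to n$ ($n\ge1$) I would assume the assertions for $n-1$, substitute $N_{n-1}(\tau,z)=\beta_0^{(n-1)}(z)+\sum_{k\ge1}\beta_k^{(n-1)}(z)\tau^{-k}$ and $\psi(\tau)-z=\tau+(c_0-z)+\sum_{k\ge1}c_k\tau^{-k}$ into the recursion, and collect powers of $\tau$: the coefficient of $\tau^1$ in $N_n$ is $\beta_0^{(n-1)}(z)-b_n(z)$, which must vanish, so $b_n(z)=\beta_0^{(n-1)}(z)$, monic of degree $n$ by the induction hypothesis. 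This proves assertion~1 and makes assertion~3 precise (the single linear equation ``the coefficient of $\tau^1$ in $N_n$ is $0$'' both determines and is satisfied by this $b_n$). Reading off the remaining coefficients gives $\beta_0^{(n)}(z)=\beta_1^{(n-1)}(z)+(z-c_0-\alpha_{n+1})\beta_0^{(n-1)}(z)$, monic of degree $n+1$, and $\beta_k^{(n)}(z)=\beta_{k+1}^{(n-1)}(z)-\alpha_{n+1}\beta_k^{(n-1)}(z)-c_k b_n(z)$ of degree $\le n$ for $k\ge1$; together with the preparatory remark that $N_n(\tau,\cdot)$ is monic of degree $n+1$, this is assertion~2.

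The one step that is not pure bookkeeping is the input used above, that $N_n(\tau,z)$ has no positive powers of $\tau$ in its expansion at $\infty$; this is exactly where the generating function of Theorem~\ref{thm:existence_fw_poly} enters. Since $\tfrac{\psi'(\tau)}{\psi(\tau)-z}-\sum_{k=0}^n\tfrac{b_k(z)}{u_{k+1}(\tau)}=\sum_{k\ge n+1}\tfrac{b_k(z)}{u_{k+1}(\tau)}$ vanishes at $\tau=\infty$ to order at least $n+2$, while the factor $(\psi(\tau)-z)u_{n+1}(\tau)$ in \eqref{eqn:definition_N_n_tau_z} has a pole there of order exactly $n+2$, the quotient $N_n(\cdot,z)$ stays bounded as $\tau\to\infty$, which is the assertion. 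Once this is granted everything reduces to the elementary coefficient comparison above; as a by-product, the recursion $b_n=\beta_0^{(n-1)}$ together with the displayed update rules for the $\beta_k^{(n)}$ and the initialisation $b_0\equiv1$, $\beta_0^{(0)}(z)=z-c_0-\alpha_1$ is precisely the promised algorithm, each $b_N$ requiring only finitely many of the coefficients $c_0,c_1,\dots$ of $\psi$.
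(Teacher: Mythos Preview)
Your proof is correct and follows essentially the same route as the paper's: induction on $n$ driven by the one-step recursion $N_n(\tau,z)=(\tau-\alpha_{n+1})N_{n-1}(\tau,z)-(\psi(\tau)-z)b_n(z)$, with the vanishing of the $\tau^1$-coefficient forcing $b_n(z)=\beta_0^{(n-1)}(z)$ and the remaining coefficients yielding the stated update rules for the $\beta_k^{(n)}$. Your explicit justification that $N_n$ carries no positive powers of $\tau$ (via the order of vanishing at $\tau=\infty$) and your cleared-denominator formula showing directly that $N_n(\tau,\cdot)$ is a polynomial in $z$ are slight elaborations, but the argument is the same.
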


\begin{proof}
For $n=0$
\begin{equation*}
\tfrac{ \psi'(\tau) }{ \psi(\tau) - z} - \tfrac{ b_0 (z) }{ \tau-\alpha_1 } = \tfrac{ \psi'(\tau) (\tau-\alpha_1) - b_0(z) (\psi(\tau)-z) }{ (\psi(\tau)-z) (\tau-\alpha_1) } = \tfrac{ N_0 (\tau,z) }{ ( \psi(\tau) - z ) u_1(\tau) }.
\end{equation*}
With the Laurent series \eqref{eqn:coeff_of_psi} of $\psi$ at infinity, we obtain
\begin{align*}
N_0(\tau,z) &= \tau \psi'(\tau) - \alpha_1 \psi'(\tau) + b_0(z) z - b_0(z) \psi(\tau) \\
&= \tau ( 1 - b_0(z) ) + \tau^0 ( \underbrace{ b_0(z) z - b_0(z) c_0 - \alpha_1 }_{ \eqcolon \beta_0^{(0)}(z)} ) + \tfrac{1}{\tau} ( \underbrace{ -1 c_1 - c_1 b_0(z) }_{ \eqcolon \beta_1^{(0)}(z)} ) \\
&\phantom{=} + \sum\limits_{k=2}^\infty ( \underbrace{ \alpha_1 (k-1) c_{k-1} -k c_k - c_k b_0(z) }_{ \eqcolon \beta_k^{(0)}(z)} ) \tfrac{1}{\tau^k}.
\end{align*}
Since the coefficient of $\tau$ has to vanish, $b_0(z) = 1$.  Further $\beta_0^{(0)}(z) = z - c_0 - \alpha_1$ and $\beta_1^{(0)}(z) = - 2 c_1$ and, for $k\geq 2$, $\beta_k^{(0)}(z) = \alpha_1 (k-1) c_{k-1} - (k+1) c_k$. Thus $N_0(\tau,z)$ is as claimed.

Now suppose $b_0(z), b_1(z), \ldots, b_{n-1}(z)$ and $N_0(\tau,z), N_1(\tau,z), \ldots, N_{n-1}(\tau,z)$ have the desired properties.  Then, from equation \eqref{eqn:definition_N_n_tau_z} we have
\begin{equation*}
\tfrac{ N_n(\tau,z) }{ (\psi(\tau)-z) u_{n+1}(\tau) } = \tfrac{ N_{n-1} (\tau,z) }{ (\psi(\tau)-z) u_n(\tau) } - \tfrac{ b_n (z) }{ u_{n+1}(\tau) } = \tfrac{ N_{n-1}(\tau,z) (\tau-\alpha_{n+1}) - b_n(z) ( \psi(\tau)-z ) }{ (\psi(\tau)-z) u_{n+1}(\tau) },
\end{equation*}
hence
\begin{align*}
N_n(\tau,z)
&= \tau N_{n-1}(\tau,z) - \alpha_{n+1} N_{n-1}(\tau,z) + z b_n(z) - b_n(z) \psi(\tau) \\
&= \tau ( \beta_0^{(n-1)}(z) - b_n(z) ) + ( \underbrace{ \beta_1^{(n-1)}(z) - \alpha_{n+1} \beta_0^{(n-1)}(z) - b_n(z) c_0 + b_n(z) z }_{ \eqcolon \beta_0^{(n)}(z) } ) \\
&\phantom{=} + \sum\limits_{k=1}^\infty ( \underbrace{ \beta_{k+1}^{(n-1)}(z) - \alpha_{n+1} \beta_k^{(n-1)}(z) - b_n(z) c_k }_{ \eqcolon \beta_k^{(n)}(z) } ) \tfrac{1}{\tau^k}.
\end{align*}
Thus $b_n(z) = \beta_0^{(n-1)}(z)$, since the coefficient of $\tau$ has to vanish.  By the induction hypothesis on $\beta_0^{(n-1)}(z)$, $b_n(z)$ is a monic polynomial in $z$ of degree $n$.  Further,
\begin{equation*}
\beta_0^{(n)}(z) = \underbrace{ \beta_1^{(n-1)}(z) }_{ \deg(...) \leq n-1} - \underbrace{ \alpha_{n+1} \beta_0^{(n-1)}(z) }_{ \deg(...) \leq n } - \underbrace{ b_n(z) c_0 }_{ \deg(...) \leq n } + \underbrace{ b_n(z) z }_{ \deg(...) = n+1 }
\end{equation*}
is a monic polynomial in $z$ of degree $n+1$.  For given $k \geq 1$,  $\beta_k^{(n)}(z)$ is a polynomial in $z$ with $\deg(\beta_k^{(n)}(z)) \leq n$.  Thus $N_n(\tau,z)$ also is a monic polynomial in $z$ of degree $n+1$.
\end{proof}

As an immediate consequence we get a recursive formula for the computation of the Faber--Walsh polynomials.

\begin{prop} \label{prop:recursion_formula_bn}
Let $N_n(\tau,z)$ be defined by \eqref{eqn:definition_N_n_tau_z} with the representation \eqref{eqn:Series_N_n_tau_z}, i.e.
\begin{equation*}
N_n(\tau,z) = \beta_0^{(n)}(z) \tau^0 + \sum\limits_{k=1}^\infty \tfrac{\beta_k^{(n)}(z)}{\tau^k},
\end{equation*}
and let the coefficients $c_k$ of $\psi$ be given by \eqref{eqn:coeff_of_psi}.  Then the following formulae for the coefficients of $N_n(\tau,z)$ hold:
\begin{align}
\beta_0^{(0)}(z) &= (z-c_0) b_0(z) - \alpha_1, \label{eqn:beta_0^0} \\
\beta_k^{(0)}(z) &= - c_k b_0(z) - k c_k - (k-1) c_{k-1} (-\alpha_1), \quad k \geq 1, \label{eqn:beta_k^0}
\end{align}
and for $n \geq 1$
\begin{align}
\beta_0^{(n)}(z) &= (z-c_0) b_n(z) + \beta_1^{(n-1)}(z) + \beta_0^{(n-1)}(z) (- \alpha_{n+1}), \label{eqn:recursion_beta_0^n}\\
\beta_k^{(n)}(z) &= \phantom{(z}-c_k \: b_n(z) + \beta_{k+1}^{(n-1)}(z) + \beta_k^{(n-1)}(z) (-\alpha_{n+1}), \quad k \geq 1. \label{eqn:recursion_beta_k^n}
\end{align}
Further, $b_0(z) = 1$ and for $n \geq 0$ the Faber--Walsh polynomials are given by
\begin{equation}
b_{n+1}(z) = \beta_0^{(n)}(z). \label{eqn:Zshg_bn_und_beta_0^n}
\end{equation}
\end{prop}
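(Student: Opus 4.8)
The plan is essentially to read off the recursion from the proof of the preceding lemma, in which all of the required identities have already been produced; the proposition merely repackages them, so there is no new idea to supply.

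First I would dispose of the base case $n = 0$. Setting $n = 0$ in \eqref{eqn:definition_N_n_tau_z} and clearing denominators gives
\[
N_0(\tau,z) = \psi'(\tau)(\tau - \alpha_1) - b_0(z)\,(\psi(\tau) - z),
\]
and inserting the Laurent expansion \eqref{eqn:coeff_of_psi} of $\psi$ (so that $\psi'(\tau) = 1 - \sum_{k \geq 1} k c_k \tau^{-k-1}$), multiplying out and collecting powers of $\tau$, reproduces exactly the coefficients $\beta_0^{(0)}(z)$ and $\beta_k^{(0)}(z)$ appearing in the proof of the lemma. The coefficient of $\tau^1$ is $1 - b_0(z)$, which must vanish because $\tau = \infty$ is a zero of order at least $n+2$ of the right-hand side of \eqref{eqn:definition_N_n_tau_z}; hence $b_0(z) = 1$, and the constant term and the coefficient of $\tau^{-k}$ are precisely \eqref{eqn:beta_0^0} and \eqref{eqn:beta_k^0} (the summand $(k-1) c_{k-1}(-\alpha_1)$ dropping out automatically when $k = 1$).

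For $n \geq 1$ I would use the one-step relation obtained by subtracting the instance of \eqref{eqn:definition_N_n_tau_z} for index $n-1$ from the one for index $n$ and multiplying by $(\psi(\tau)-z)\,u_{n+1}(\tau)$, namely
\[
N_n(\tau,z) = (\tau - \alpha_{n+1})\,N_{n-1}(\tau,z) - b_n(z)\,(\psi(\tau) - z).
\]
Substituting the series \eqref{eqn:Series_N_n_tau_z} for $N_{n-1}$ together with \eqref{eqn:coeff_of_psi} and again sorting by powers of $\tau$: the $\tau^1$-coefficient is $\beta_0^{(n-1)}(z) - b_n(z)$, whose vanishing yields $b_n(z) = \beta_0^{(n-1)}(z)$, i.e.\ \eqref{eqn:Zshg_bn_und_beta_0^n} after the shift $n \mapsto n+1$; the constant term gives \eqref{eqn:recursion_beta_0^n}; and the $\tau^{-k}$-coefficient gives \eqref{eqn:recursion_beta_k^n}. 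The degree bookkeeping for the $\beta_k^{(n)}(z)$ is already contained in the lemma and need not be repeated.

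The only step that is not pure symbol-pushing — and hence the closest thing to an obstacle — is checking that these series manipulations are legitimate: by \eqref{eqn:def_psi} the Laurent series of $\psi$ and $\psi'$ converge locally uniformly near $\tau = \infty$, and by the lemma $N_{n-1}(\tau,z) = \beta_0^{(n-1)}(z) + \sum_{k \geq 1} \beta_k^{(n-1)}(z)\tau^{-k}$ has the same property, so the product $(\tau - \alpha_{n+1})\,N_{n-1}(\tau,z)$ may be expanded and its coefficients compared with those of the other side term by term. With that justification in place the proposition follows as an immediate corollary of the lemma.
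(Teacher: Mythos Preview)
Your proposal is correct and matches the paper's approach exactly: the paper presents the proposition as ``an immediate consequence'' of the preceding lemma with no separate proof, and your write-up simply makes explicit that the formulae \eqref{eqn:beta_0^0}--\eqref{eqn:Zshg_bn_und_beta_0^n} are literally the coefficient identities already computed in that lemma's proof.
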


These formulae show that the coefficients $\beta_k^{(n)}(z)$, $k \geq 0$, can be computed with $b_n(z)$.  Then, the next Faber--Walsh polynomial $b_{n+1}(z)$ is obtained from the equation $\beta_0^{(n)}(z) = b_{n+1}(z)$.  This allows a recursive computation of the Faber--Walsh polynomials.
The relations \eqref{eqn:beta_0^0}--\eqref{eqn:recursion_beta_k^n} can be visualised in a triangular scheme.  For clarity of presentation we indicate neither the dependence of $\beta_k^{(n)}(z)$ on $b_n(z)$, i.e. $\beta_0^{(n-1)}(z)$, nor on the sequence $(\alpha_j)_{j=1}^\infty$.  The scheme then looks as follows:
\begin{center}
\begin{pgfpicture}{0cm}{-4.5cm}{8cm}{0.5cm}
\pgfsetxvec{\pgfpoint{2.3cm}{0cm}}
\pgfnodebox{c_0}[virtual]{\pgfxy(0,0)}{$c_0$}{2pt}{2pt}
\pgfnodebox{c_1}[virtual]{\pgfxy(1,0)}{$c_1$}{2pt}{2pt}
\pgfnodebox{c_2}[virtual]{\pgfxy(2,0)}{$c_2$}{2pt}{2pt}
\pgfnodebox{c_3}[virtual]{\pgfxy(3,0)}{$c_3$}{2pt}{2pt}
\pgfnodebox{c_4}[virtual]{\pgfxy(4,0)}{$\ldots$}{2pt}{2pt}

\pgfnodebox{beta_0^0}[virtual]{\pgfxy(0,-1)}{$b_1(z) = \beta_0^{(0)}(z)$}{2pt}{2pt}
\pgfnodebox{beta_1^0}[virtual]{\pgfxy(1,-1)}{$\beta_1^{(0)}(z)$}{2pt}{2pt}
\pgfnodebox{beta_2^0}[virtual]{\pgfxy(2,-1)}{$\beta_2^{(0)}(z)$}{2pt}{2pt}
\pgfnodebox{beta_3^0}[virtual]{\pgfxy(3,-1)}{$\beta_3^{(0)}(z)$}{2pt}{2pt}
\pgfnodebox{beta_4^0}[virtual]{\pgfxy(4,-1)}{$\ldots$}{2pt}{2pt}

\pgfnodeconnline{c_0}{beta_0^0}
\pgfnodeconnline{c_0}{beta_1^0}
\pgfnodeconnline{c_1}{beta_1^0}
\pgfnodeconnline{c_1}{beta_2^0}
\pgfnodeconnline{c_2}{beta_2^0}
\pgfnodeconnline{c_2}{beta_3^0}
\pgfnodeconnline{c_3}{beta_3^0}

\pgfnodebox{beta_0^1}[virtual]{\pgfxy(1,-2)}{$b_2(z) = \beta_0^{(1)}(z)$}{2pt}{2pt}
\pgfnodebox{beta_1^1}[virtual]{\pgfxy(2,-2)}{$\beta_1^{(1)}(z)$}{2pt}{2pt}
\pgfnodebox{beta_2^1}[virtual]{\pgfxy(3,-2)}{$\beta_2^{(1)}(z)$}{2pt}{2pt}
\pgfnodebox{beta_3^1}[virtual]{\pgfxy(4,-2)}{$\ldots$}{2pt}{2pt}

\pgfnodeconnline{beta_0^0}{beta_0^1}
\pgfnodeconnline{beta_1^0}{beta_0^1}
\pgfnodeconnline{beta_1^0}{beta_1^1}
\pgfnodeconnline{beta_2^0}{beta_1^1}
\pgfnodeconnline{beta_2^0}{beta_2^1}
\pgfnodeconnline{beta_3^0}{beta_2^1}

\pgfnodebox{beta_0^2}[virtual]{\pgfxy(2,-3)}{$b_3(z) = \beta_0^{(2)}(z)$}{2pt}{2pt}
\pgfnodebox{beta_1^2}[virtual]{\pgfxy(3,-3)}{$\beta_1^{(2)}(z)$}{2pt}{2pt}
\pgfnodebox{beta_2^2}[virtual]{\pgfxy(4,-3)}{$\ldots$}{2pt}{2pt}

\pgfnodeconnline{beta_0^1}{beta_0^2}
\pgfnodeconnline{beta_1^1}{beta_0^2}
\pgfnodeconnline{beta_1^1}{beta_1^2}
\pgfnodeconnline{beta_2^1}{beta_1^2}

\pgfnodebox{beta_0^3}[virtual]{\pgfxy(3,-4)}{$b_4(z) = \beta_0^{(3)}(z)$}{2pt}{2pt}
\pgfnodebox{beta_1^3}[virtual]{\pgfxy(4,-4)}{$\ldots$}{2pt}{2pt}

\pgfnodeconnline{beta_0^2}{beta_0^3}
\pgfnodeconnline{beta_1^2}{beta_0^3}
\end{pgfpicture}
\end{center}
The Faber--Walsh polynomials are the diagonal entries of this scheme.  Note that the computation of $b_n(z) = \beta_0^{(n-1)}(z)$ requires only the values $\alpha_1, \ldots, \alpha_n$ and the coefficients $c_0, \ldots$, $c_{n-1}$ from the Laurent series of $\psi$ at infinity.

Note further that the recursion for the $\beta_k^{(n)}(z)$ has ``depth $1$'' in $n$. Hence, after computation of the relevant $\beta_k^{(n)}(z)$, all $\beta_k^{(n-1)}(z)$ can be eliminated (except $\beta_0^{(n-1)}(z) = b_n(z)$, of course).  However, if at some later point some more Faber--Walsh polynomials are to be computed, this is not recommendable, since for computing $b_{n+2}(z)$, all $\beta_k^{(j)}(z)$, $0 \leq k, j \leq n+1$, are needed. \bigskip

Proposition~\ref{prop:recursion_formula_bn} further allows to establish an explicit formula for $\beta_k^{(n)}(z)$ in terms of $b_0(z), b_1(z), \ldots, b_n(z)$, $c_0, \ldots, c_{n+k}$ and $\alpha_1, \ldots, \alpha_{n+1}$.  With \eqref{eqn:Zshg_bn_und_beta_0^n}, this also yields a formula for $b_{n+1}(z)$.  Unlike in the case of Faber polynomials, this formula turns out to be of little practical interest, due to its complexity.  Nevertheless, it may be of some theoretical interest.

\begin{lem} \label{lem:Formeln_beta_k^n}
The coefficients $\beta_k^{(n)}(z)$, $n \geq 0$, are given by
\begin{align*}
\beta_0^{(n)}(z) &= (z-c_0) b_n(z) - \sum\limits_{j=1}^n c_j b_{n-j}(z) - n c_n \\
&\phantom{=} - \sum\limits_{\substack{i+\ell=n \\ i = -1, 0, \ldots, n-1}} i c_i \sum\limits_{1 \leq i_1 < \ldots < i_\ell \leq n+1} \prod\limits_{m=1}^\ell (-\alpha_{i_m}) \\
&\phantom{=} - \sum\limits_{\substack{i+j+\ell=n \\ i \geq 1, j \geq 0, \ell \geq 1}} c_i b_j(z) \sum\limits_{j+2 \leq i_1 < \ldots < i_\ell \leq n+1} \prod\limits_{m=1}^\ell (-\alpha_{i_m}) \\
&\phantom{=} + (z-c_0) \sum\limits_{j=0}^{n-1} b_j(z) \prod\limits_{m=j+2}^{n+1} (-\alpha_m), \\
\beta_k^{(n)}(z) &= - \sum\limits_{j=0}^n c_{k+j} b_{n-j}(z) - (k+n) c_{k+n} \\
&\phantom{=} - \sum\limits_{\substack{i+\ell=n \\ i = -1, 0, \ldots, n-1}} (k+i) c_{k+i} \sum\limits_{1 \leq i_1 < \ldots < i_\ell \leq n+1} \prod\limits_{m=1}^\ell (-\alpha_{i_m}) \\
&\phantom{=} - \sum\limits_{\substack{i+j+\ell=n \\ i \geq 0, j \geq 0, \ell \geq 1}} c_{k+i} b_j(z) \sum\limits_{ j+2 \leq i_1 < \ldots < i_\ell \leq n+1} \prod\limits_{m=1}^\ell (-\alpha_{i_m}), \quad k \geq 1,
\end{align*}
and where $c_{-1} \coloneq 1$.
\end{lem}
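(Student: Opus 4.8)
The plan is to sidestep the recursion \eqref{eqn:recursion_beta_0^n}--\eqref{eqn:recursion_beta_k^n} and instead produce a closed form for the auxiliary function $N_n(\tau,z)$ of \eqref{eqn:definition_N_n_tau_z}, from which the $\beta_k^{(n)}(z)$ can be read off as Laurent coefficients at $\tau=\infty$. From \eqref{eqn:definition_N_n_tau_z} (as used in the proof of the preceding lemma) one has $N_0(\tau,z)=(\tau-\alpha_1)\psi'(\tau)+b_0(z)(z-\psi(\tau))$ and, for $n\ge 1$, $N_n(\tau,z)=(\tau-\alpha_{n+1})N_{n-1}(\tau,z)+b_n(z)(z-\psi(\tau))$. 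A one-line induction, using $u_{n+1}(\tau)=(\tau-\alpha_{n+1})u_n(\tau)$ and $\prod_{m=j+2}^{n+1}(\tau-\alpha_m)=(\tau-\alpha_{n+1})\prod_{m=j+2}^{n}(\tau-\alpha_m)$, then yields
\[
N_n(\tau,z)=u_{n+1}(\tau)\,\psi'(\tau)+\sum_{j=0}^{n}b_j(z)\,(z-\psi(\tau))\prod_{m=j+2}^{n+1}(\tau-\alpha_m).
\]
By the preceding lemma $N_n(\tau,z)$ has a Laurent expansion at infinity of the form \eqref{eqn:Series_N_n_tau_z} with no positive powers of $\tau$, so $\beta_k^{(n)}(z)$ is exactly the coefficient of $\tau^{-k}$, $k\ge 0$, in the right-hand side above.

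I would then expand each factor near $\tau=\infty$. By \eqref{eqn:coeff_of_psi}, $z-\psi(\tau)=-\tau+(z-c_0)-\sum_{k\ge 1}c_k\tau^{-k}$ and $\psi'(\tau)=\sum_{\ell\ge 0}(-(\ell-1)c_{\ell-1})\tau^{-\ell}$, where the convention $c_{-1}\coloneq 1$ neatly absorbs the leading terms (the coefficient of $\tau$ in $\psi$ and of $\tau^0$ in $\psi'$). For the polynomial factors, write $\varepsilon_\ell(p,q)\coloneq\sum_{p\le i_1<\ldots<i_\ell\le q}\prod_{m=1}^{\ell}(-\alpha_{i_m})$ for the $\ell$-th elementary symmetric polynomial in $-\alpha_p,\ldots,-\alpha_q$ (so $\varepsilon_0(p,q)=1$, $\varepsilon_{q-p+1}(p,q)=\prod_{m=p}^{q}(-\alpha_m)$, and $\varepsilon_\ell(p,q)=0$ for $\ell>q-p+1$); then $u_{n+1}(\tau)=\sum_{\ell=0}^{n+1}\varepsilon_\ell(1,n+1)\,\tau^{n+1-\ell}$ and $\prod_{m=j+2}^{n+1}(\tau-\alpha_m)=\sum_{\ell=0}^{n-j}\varepsilon_\ell(j+2,n+1)\,\tau^{n-j-\ell}$. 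Collecting the coefficient of $\tau^{-k}$ is now a finite sum over triples $(i,j,\ell)$ with $i+j+\ell=n$: the factor $u_{n+1}(\tau)\psi'(\tau)$ contributes precisely $-\sum_{i=-1}^{n}(k+i)c_{k+i}\,\varepsilon_{n-i}(1,n+1)$, and the $j$-th summand of the second block contributes the piece $(z-c_0)b_j(z)\prod_{m=j+2}^{n+1}(-\alpha_m)$ (only into $\tau^0$, since $z-c_0$ sits at degree $0$) together with $-\sum_{i+\ell=n-j,\,i\ge 1}c_ib_j(z)\varepsilon_\ell(j+2,n+1)$ coming from the tail $-\sum_{k\ge1}c_k\tau^{-k}$.

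It then remains to regroup these sums into the shape claimed in the statement. One pulls the $i=n$ term out of the $u_{n+1}\psi'$-sum (it is $-nc_n$ for $k=0$, resp.\ $-(k+n)c_{k+n}$), pulls the $\ell=0$ terms out of the $c\cdot b$-sums (these assemble, after relabelling the summation index $i$ as the index of $b$, into $-\sum_{j=1}^nc_jb_{n-j}(z)$ for $k=0$ and into $-\sum_{j=0}^nc_{k+j}b_{n-j}(z)$ for $k\ge 1$), and records that $\varepsilon_{n-j}(j+2,n+1)=\prod_{m=j+2}^{n+1}(-\alpha_m)$, so that the $(z-c_0)$ block becomes $(z-c_0)b_n(z)+(z-c_0)\sum_{j=0}^{n-1}b_j(z)\prod_{m=j+2}^{n+1}(-\alpha_m)$. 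What survives is exactly the two displayed formulas, the $k\ge 1$ case differing from $k=0$ only in that $c_0$ is no longer tied to $z$, which makes the index $i$ in the $c_{k+i}b_j$-sum run from $0$ rather than from $1$.

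The main difficulty is purely organisational: one must track every summand as a triple $(i,j,\ell)$ and be careful at the boundaries $i=-1$, $\ell=0$ and $j=n$, and with the convention $c_{-1}=1$, so that nothing is double-counted or dropped; there is no analytic content beyond the closed form for $N_n(\tau,z)$. (Alternatively, both formulas can be obtained by a direct induction on $n$ from \eqref{eqn:recursion_beta_0^n}--\eqref{eqn:recursion_beta_k^n}, substituting the inductive expressions for $\beta_{k+1}^{(n-1)}$ and $\beta_k^{(n-1)}$ and using the Pascal-type identity $\varepsilon_\ell(p,n+1)=\varepsilon_\ell(p,n)+(-\alpha_{n+1})\varepsilon_{\ell-1}(p,n)$; the bookkeeping is essentially the same, and the freshly introduced $-c_kb_n(z)$, resp.\ $(z-c_0)b_n(z)$, term supplies the missing $\ell=0$, $j=n$ summand.)
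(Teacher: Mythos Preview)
Your argument is correct. The closed form
\[
N_n(\tau,z)=u_{n+1}(\tau)\psi'(\tau)+\sum_{j=0}^{n}b_j(z)(z-\psi(\tau))\prod_{m=j+2}^{n+1}(\tau-\alpha_m)
\]
follows immediately from the definition \eqref{eqn:definition_N_n_tau_z} by clearing denominators (no induction is even needed: $u_{n+1}(\tau)/u_{j+1}(\tau)=\prod_{m=j+2}^{n+1}(\tau-\alpha_m)$), and your extraction of the coefficient of $\tau^{-k}$ via the elementary symmetric polynomials $\varepsilon_\ell$ and the convention $c_{-1}=1$ is carried out correctly, including the boundary bookkeeping you flag.

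This is a genuinely different route from the paper's. The paper simply states that the proof is ``easy but tedious'' and proceeds by induction on $n$ directly from the recursions \eqref{eqn:beta_0^0}--\eqref{eqn:recursion_beta_k^n}, which is exactly the alternative you sketch in your final paragraph. Your approach is more illuminating: it explains \emph{where} the formula comes from (it is nothing but the Cauchy product of the Laurent expansions of $\psi'$, $z-\psi$, and the polynomial factors $u_{n+1}$ and $\prod_{m=j+2}^{n+1}(\tau-\alpha_m)$), whereas the inductive verification confirms the formula without suggesting how one would guess it. The inductive proof, on the other hand, requires only the Pascal-type identity you mention and avoids writing down the closed form for $N_n$; the bookkeeping burden is comparable in both.
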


We omit the easy but tedious proof (by induction) of the lemma.

This result can be applied to the case of Faber polynomials ($\nu = 1$).  Recall that in this case $\alpha_j = 0$ for all $j \in \N$.  Thus the formula for $\beta_0^{(n)}(z)$ yields the well-known recursion for Faber polynomials.

\begin{cor}
For the classical Faber polynomials $F_0(z) = 1$ and for $n \geq 0$ holds
\begin{equation*}
F_{n+1}(z) = (z-c_0) F_n(z) - \sum\limits_{j=1}^n c_j F_{n-j}(z) - n c_n.
\end{equation*}
\end{cor}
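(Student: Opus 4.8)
The plan is to read off the corollary from Lemma~\ref{lem:Formeln_beta_k^n} by specialising to $\nu=1$. Recall from the remark following Theorem~\ref{thm:fw_series} (and the comment after Lemma~\ref{lem:existence_alpha_n}) that when $\nu=1$ one has $\alpha_j=a_1$ for all $j$; after the translation normalisation one may take $a_1=0$, so $\alpha_j=0$ for every $j$, and in this case the Faber--Walsh polynomials $b_n(z)$ coincide with the classical Faber polynomials $F_n(z)$ of $E$, while the coefficients $c_k$ of $\psi$ in \eqref{eqn:coeff_of_psi} are exactly the Laurent coefficients of the exterior conformal map $\psi=\Phi^{-1}$. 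By Proposition~\ref{prop:recursion_formula_bn} we have $b_0(z)=1=F_0(z)$ and $b_{n+1}(z)=\beta_0^{(n)}(z)$ for $n\ge 0$, so it suffices to evaluate the formula for $\beta_0^{(n)}(z)$ from Lemma~\ref{lem:Formeln_beta_k^n} at $\alpha_j\equiv 0$.

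First I would inspect the three $\alpha$-dependent sums in the expression for $\beta_0^{(n)}(z)$. In the second line, the constraint $i+\ell=n$ with $i\in\{-1,0,\dots,n-1\}$ forces $\ell=n-i\ge 1$, so each inner product $\prod_{m=1}^{\ell}(-\alpha_{i_m})$ has at least one factor and therefore vanishes. In the third line the condition $\ell\ge 1$ is imposed explicitly, so again every product vanishes. In the last line, the product $\prod_{m=j+2}^{n+1}(-\alpha_m)$ has $n-j$ factors, and since $j$ runs over $0,\dots,n-1$ we have $n-j\ge 1$, so this product vanishes as well. Hence all three sums drop out when $\alpha_j\equiv 0$, leaving
\begin{equation*}
\beta_0^{(n)}(z) = (z-c_0)\,F_n(z) - \sum_{j=1}^{n} c_j F_{n-j}(z) - n c_n .
\end{equation*}
Combining this with $F_{n+1}(z)=b_{n+1}(z)=\beta_0^{(n)}(z)$ gives the claimed recursion.

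The only point requiring a little care is the degenerate case $n=0$: there the sum $\sum_{j=1}^{0}$ is empty, the term $-nc_n$ is $0$, and in the (now vanishing) second sum one uses the convention $c_{-1}=1$; one checks directly that $\beta_0^{(0)}(z)=(z-c_0)F_0(z)=z-c_0$, which is the correct value $F_1(z)=z-c_0$. So there is no real obstacle here — the proof is pure bookkeeping — and the main thing to verify is precisely that in each of the three $\alpha$-sums the index ranges guarantee a nonempty product (equivalently $\ell\ge 1$, resp. $n-j\ge 1$), which is what makes every one of them vanish identically once $\alpha_j=0$.
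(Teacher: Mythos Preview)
Your proof is correct and follows exactly the route the paper indicates: specialise Lemma~\ref{lem:Formeln_beta_k^n} to $\nu=1$ (so $\alpha_j=0$), observe that each of the three $\alpha$-dependent sums in $\beta_0^{(n)}(z)$ involves a nonempty product of $(-\alpha_{i_m})$ and hence vanishes, and conclude via $F_{n+1}(z)=\beta_0^{(n)}(z)$. You have simply made explicit the bookkeeping that the paper leaves to the reader.
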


\section{Asymptotic convergence factor and asymptotically optimal polynomials}

In this section it is shown that Faber--Walsh polynomials normalised in some point are asymptotically optimal polynomials.

Denote the set of polynomials (with complex coefficients) of degree at most $n$ by $\cP_n$.  For given $z_0 \in \C$ let $\cP_n(z_0) \coloneq \{ p \in \cP_n : p(z_0) = 1 \}$ be the set of polynomials from $\cP_n$ normalised in $z_0$.

\begin{defn}[{cf. \cite{EiermannNiethammer1983}}] \label{def:asymptotic_conv_fac} \label{def:asymptotic_opt_poly}
Let $E \subseteq \C$ be compact and $z_0 \in \C$.  Then the \emph{asymptotic convergence factor for polynomials from $\cP_n(z_0)$ associated with $E$} is defined by
\begin{equation*}
R_{z_0}(E) \coloneq \limsup_{n \to \infty} \Big( \inf_{ p \in \cP_n(z_0) } \norm{p}_E \Big)^{\frac{1}{n}}.
\end{equation*}
Here, $\norm{f}_E = \sup_{z \in E} \abs{ f(z) }$ denotes the supremum norm on $E$. A sequence $(p_n(z))_{n=0}^\infty$ with $p_n(z) \in \cP_n(z_0)$ is called \emph{asymptotically optimal for $E$ with respect to $z_0$} if
\begin{equation*}
\lim\limits_{n \to \infty} \norm{p_n(z)}_E^{\frac{1}{n}} = R_{z_0}(E)
\end{equation*}
holds.
\end{defn}

Note that for any compact set $E$ and $z_0 \in \C$ the inequality $R_{z_0}(E) \leq 1$ holds.  If $z_0 \in E$, then $R_{z_0}(E) = 1$.

We now turn our attention to compact sets $E \subseteq \C$ whose complement $K$ is connected and \emph{regular} in the sense that $K$ possesses a Green's function $G$ with pole at infinity (cf. e.g. \cite[p.~65]{Walsh-IntApprox}).  If $E$ is compact, has no isolated points, and if $K$ is of finite connectivity, then $K$ is regular (cf. \cite[p.~65]{Walsh-IntApprox}).  The level lines of Green's function are denoted by
\begin{equation}
\Gamma_\sigma = \{ z \in K : G(z) = \log(\sigma) \}, \quad \sigma > 1. \label{eqn:def_Gamma_sigma}
\end{equation}
We will consistently use this notation throughout this section.  For such compact sets the asymptotic convergence factor can be characterised by Green's function.

\begin{prop} \label{prop:R(C)=1/sigma_0}
Let $E \subseteq \C$ be a compact set whose complement $K$ is connected and regular and let $z_0 \in \C \backslash E$.  Then
\begin{equation*}
R_{z_0}(E) = \tfrac{1}{\sigma_0},
\end{equation*}
where $\sigma_0 > 1$ is defined by $G(z_0) = \log (\sigma_0)$, where $G$ is Green's function with pole at infinity for $K$.
\end{prop}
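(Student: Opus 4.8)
The plan is to establish the two inequalities $R_{z_0}(E) \geq \tfrac{1}{\sigma_0}$ and $R_{z_0}(E) \leq \tfrac{1}{\sigma_0}$ separately. For the lower bound I would invoke the Bernstein--Walsh lemma: since $K$ is regular, Green's function $G$ extends continuously by $0$ to $\rand E$, and the maximum principle applied to the subharmonic function $z \mapsto \tfrac{1}{n}\log\abs{p(z)} - G(z)$ on $K$ (which is bounded above near $\infty$) yields $\abs{p(z)} \leq \norm{p}_E \, e^{n G(z)}$ for every $p \in \cP_n$ and $z \in K$. Applying this at $z = z_0$ to an arbitrary $p \in \cP_n(z_0)$ gives $1 = \abs{p(z_0)} \leq \norm{p}_E \, e^{n G(z_0)} = \norm{p}_E \, \sigma_0^n$, hence $\inf_{p \in \cP_n(z_0)} \norm{p}_E \geq \sigma_0^{-n}$ for all $n$, and therefore $R_{z_0}(E) \geq \tfrac{1}{\sigma_0}$.

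For the upper bound I would exhibit an explicit near-extremal sequence. Let $\kappa > 0$ denote the logarithmic capacity of $E$ and $\mu_E$ its equilibrium measure. For each $n$ choose Fekete points $z_1^{(n)}, \dots, z_n^{(n)} \in E$ and set $q_n(z) \coloneq \prod_{j=1}^n (z - z_j^{(n)})$. By classical properties of Fekete points, $\norm{q_n}_E^{1/n} \to \kappa$ and the normalised zero-counting measures $\nu_n \coloneq \tfrac1n \sum_{j=1}^n \delta_{z_j^{(n)}}$ converge weak-$*$ to $\mu_E$. Because $z_0 \notin E$, the function $w \mapsto \log\abs{z_0-w}$ is continuous (hence bounded) on $E$, so
\begin{equation*}
\tfrac1n \log\abs{q_n(z_0)} = \int \log\abs{z_0-w}\, d\nu_n(w) \longrightarrow \int \log\abs{z_0-w}\, d\mu_E(w) = G(z_0) + \log\kappa = \log(\sigma_0\kappa),
\end{equation*}
using the standard relation $\int \log\abs{z_0-w}\,d\mu_E(w) = G(z_0)+\log\kappa$ between the equilibrium potential of $E$ and $G$. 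Thus $\abs{q_n(z_0)}^{1/n} \to \sigma_0\kappa$, and in particular $q_n(z_0) \neq 0$ for $n$ large (also clear since the zeros of $q_n$ lie in $E$). Setting $p_n \coloneq q_n/q_n(z_0) \in \cP_n(z_0)$, we get $\norm{p_n}_E^{1/n} = \norm{q_n}_E^{1/n}/\abs{q_n(z_0)}^{1/n} \to \kappa/(\sigma_0\kappa) = \tfrac{1}{\sigma_0}$, so
\begin{equation*}
R_{z_0}(E) = \limsup_{n\to\infty} \Big( \inf_{p\in\cP_n(z_0)} \norm{p}_E \Big)^{1/n} \leq \limsup_{n\to\infty} \norm{p_n}_E^{1/n} = \tfrac{1}{\sigma_0}.
\end{equation*}

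Combining the two bounds gives $R_{z_0}(E) = \tfrac{1}{\sigma_0}$. The only genuinely non-elementary ingredient, and hence the main obstacle, is the potential-theoretic material used in the second step: the equidistribution of Fekete points towards $\mu_E$, the asymptotic extremality $\norm{q_n}_E^{1/n}\to\kappa$, and the identity linking $\mu_E$ to $G$; everything else is the maximum principle together with elementary bookkeeping with $\limsup$'s. One could replace Fekete polynomials by any monic sequence whose zeros equidistribute towards $\mu_E$ and whose sup-norms on $E$ behave like $\kappa^n$ (and later in the paper the suitably normalised Faber--Walsh polynomials will be shown to furnish such a sequence in the multiply connected setting).
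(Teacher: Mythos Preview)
Your argument is correct but follows a genuinely different route from the paper. The paper does not split into a lower and an upper bound; instead it rewrites the extremal problem as a best-approximation problem for the function $f(z)=\tfrac{1}{z-z_0}$. Writing $p_n(z)=1-(z-z_0)q_{n-1}(z)$ and noting that $\abs{z-z_0}$ is bounded above and below on $E$, one obtains
\[
R_{z_0}(E)=\limsup_{n\to\infty}\Big(\inf_{q_{n-1}\in\cP_{n-1}}\norm{\tfrac{1}{z-z_0}-q_{n-1}(z)}_E\Big)^{1/n},
\]
and the right-hand side equals $\tfrac{1}{\sigma_0}$ by the classical Bernstein--Walsh theorem on the rate of best polynomial approximation (as in \cite[ch.~4]{Walsh-IntApprox} or \cite[Theorem~4.1]{Saff2010}), since $\tfrac{1}{z-z_0}$ is analytic exactly in the interior of $\Gamma_{\sigma_0}$.

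Your approach is more self-contained in that it produces both inequalities explicitly: the Bernstein--Walsh inequality for the lower bound, and an explicit near-extremal sequence (normalised Fekete polynomials) for the upper bound. The cost is that you invoke the potential-theoretic package (equidistribution of Fekete points, the identity $\int\log\abs{z_0-w}\,d\mu_E(w)=G(z_0)+\log\kappa$) directly, whereas the paper hides this machinery inside a single citation. Your remark that the Faber--Walsh polynomials themselves can replace the Fekete polynomials is apt and is exactly what is done in proposition~\ref{prop:FW_asympt_opt}.
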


This proposition is proved in \cite[Theorem~11]{EiermannNiethammerVarga1985} for simply connected $K$.  Later in this article, a note states that the proposition also holds for connected and regular $K$, referring to \cite[ch.~4]{Walsh-IntApprox}.  
For completeness, we give a sketch of the proof in the general case.

\begin{proof}
Since $E$ is compact and $z_0 \notin E$, we have
\begin{equation*}
m \coloneq \min_{z \in E} \abs{z-z_0} > 0, \quad M \coloneq \max_{z \in E} \abs{z-z_0} < \infty.
\end{equation*}
Let $p_n(z) \in \cP_n(z_0)$ and $q_{n-1}(z) \in \cP_{n-1}$ with $p_n(z) = 1 - (z-z_0) q_{n-1}(z)$.  Then
\begin{equation*}
\abs{ p_n(z) } = \abs{z-z_0} \abs{ \tfrac{1}{z-z_0} - q_{n-1}(z) } \leq M \abs{ \tfrac{1}{z-z_0} - q_{n-1}(z) } \leq \tfrac{M}{m} \abs{ p_n(z) }, \quad z \in E,
\end{equation*}
so that $\norm{ p_n(z) }_E \leq M \norm{ \tfrac{1}{z-z_0} - q_{n-1}(z) }_E \leq \tfrac{M}{m} \norm{ p_n(z) }_E$.  Now, since 
the map $\cP_{n-1} \to \cP_n(z_0)$, $q_{n-1}(z) \mapsto p_n(z) = 1 - (z-z_0) q_{n-1}(z)$, is a bijection, we obtain
\begin{equation*}
\inf_{ p_n \in \cP_n(z_0) } \norm{ p_n(z) }_E \leq M \inf_{ q_{n-1} \in \cP_{n-1} } \norm{ \tfrac{1}{z-z_0} - q_{n-1}(z) }_E \leq \tfrac{M}{m} \inf_{ p_n \in \cP_n(z_0) } \norm{ p_n(z) }_E.
\end{equation*}
Taking $n$-th root and the limit superior yields
\begin{equation}
R_{z_0}(E) = \limsup_{n \to \infty} \Big( \inf_{ p_n \in \cP_n(z_0) } \norm{ p_n(z) }_E \Big)^{\frac{1}{n}} = \limsup_{n \to \infty} \Big( \inf_{ q_{n-1} \in \cP_{n-1} } \norm{ \tfrac{1}{z-z_0} - q_{n-1}(z) }_E \Big)^{\frac{1}{n}}. \label{eqn:R(C)_charact_q_{n-1}}
\end{equation}

Note that $f(z) \coloneq \tfrac{1}{z-z_0}$ is analytic on $E$ and in the interior of $\Gamma_{\sigma_0}$ (recall \eqref{eqn:def_Gamma_sigma}), but is not analytic in the interior of any $\Gamma_\sigma$ with $\sigma > \sigma_0$.  Then, by
\cite[Theorem~4.1]{Saff2010}, which follows from the results in \cite[ch.~4]{Walsh-IntApprox}, we have
\begin{equation}
\limsup_{n \to \infty} \Big( \inf_{ q_n \in \cP_n } \norm{ \tfrac{1}{z-z_0} - q_n(z) }_E \Big)^{\frac{1}{n}} = \tfrac{1}{\sigma_0}. \label{eqn:limsup_q_n_1/sigma_0}
\end{equation}
From \eqref{eqn:limsup_q_n_1/sigma_0} it is not difficult to see that also the right hand side of \eqref{eqn:R(C)_charact_q_{n-1}} equals $\tfrac{1}{\sigma_0}$.
\end{proof}

With this characterisation of $R_{z_0}(E)$, we show that Faber--Walsh polynomials are asymptotically optimal in the sense of definition~\ref{def:asymptotic_opt_poly}, which was not known previously.

\begin{prop} \label{prop:FW_asympt_opt}
Let the notation be as in theorem~\ref{thm:existence_fw_poly} and assume $z_0 \in \C \backslash E$.  Then there exists a unique $\sigma_0 > 1$ with $z_0 \in \Gamma_{\sigma_0}$.  Further, $R_{z_0}(E) = \tfrac{1}{\sigma_0}$ and $R_{z_0}(\overline{E_\sigma}) = \tfrac{\sigma}{\sigma_0}$ for any $1 < \sigma \leq \sigma_0$, and the following limits exist:
\begin{align}
\lim\limits_{n \to \infty} \normlr{ \tfrac{b_n(.)}{b_n(z_0)} }_E^{\frac{1}{n}} &= \tfrac{1}{\sigma_0} = R_{z_0}(E), \label{eqn:limit_normalised_b_n_on_E} \\
\lim\limits_{n \to \infty} \normlr{ \tfrac{b_n(.)}{b_n(z_0)} }_{\Gamma_\sigma}^{\frac{1}{n}} &= \tfrac{\sigma}{\sigma_0}, \quad \text{for any } \sigma > 1. \label{eqn:limit_normalised_b_n_on_Gamma_sigma}
\end{align}
In particular, Faber--Walsh polynomials normalised in $z_0$ are asymptotically optimal for $E$ and any $\overline{E_\sigma}$, $1 < \sigma \leq \sigma_0$.
\end{prop}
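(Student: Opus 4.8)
The plan is to reduce everything to the exact exponential growth rates of $\abs{b_n(z_0)}$ and of $\norm{b_n}_{\Gamma_\sigma}$, which follow from proposition~\ref{prop:bn_two-sided_inequality} together with lemma~\ref{lem:existence_alpha_n}, and to read off the values of $R_{z_0}$ from proposition~\ref{prop:R(C)=1/sigma_0}.

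I would begin with the elementary part. For $z_0 \in \C \setminus E$ the point $\tau_0 \coloneq \Phi(z_0)$ lies in $K_1$ and is distinct from the foci $a_1, \ldots, a_\nu$ (since $\abs{U(\tau_0)} > \mu > 0$ while $U(a_j) = 0$); setting $\sigma_0 \coloneq \abs{U(\tau_0)}/\mu$ gives $G(z_0) = G_1(\tau_0) = \log \sigma_0$, so $\sigma_0 > 1$ and $z_0 \in \Gamma_{\sigma_0}$, and $\sigma_0$ is unique because $G$ is single-valued. Since $\widehat{\C} \setminus E$ is connected and carries Green's function $G = G_1 \circ \Phi$, it is regular, so proposition~\ref{prop:R(C)=1/sigma_0} gives $R_{z_0}(E) = 1/\sigma_0$. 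For $1 < \sigma < \sigma_0$ one has $z_0 \notin \overline{E_\sigma}$, the set $\widehat{\C} \setminus \overline{E_\sigma}$ is the $\Phi$-preimage of the lemniscatic domain $\{w : \abs{U(w)} > \sigma\mu\}$, hence connected, and $G - \log\sigma$ is its Green's function with pole at infinity; applying proposition~\ref{prop:R(C)=1/sigma_0} to $\overline{E_\sigma}$ and evaluating at $z_0$ (where $(G - \log\sigma)(z_0) = \log(\sigma_0/\sigma)$) yields $R_{z_0}(\overline{E_\sigma}) = \sigma/\sigma_0$; the case $\sigma = \sigma_0$ is the trivial identity $R_{z_0}(\overline{E_{\sigma_0}}) = 1$, since then $z_0 \in \Gamma_{\sigma_0} \subseteq \overline{E_{\sigma_0}}$.

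Next come the growth rates. Writing $z_0 = \psi(\tau_0)$ with $\tau_0 \in \Lambda_{\sigma_0}$, proposition~\ref{prop:bn_two-sided_inequality} at the level $\sigma_0$ gives constants with $C_1 \abs{u_n(\tau_0)} < \abs{b_n(z_0)} < C_2 \abs{u_n(\tau_0)}$ for all large $n$, while lemma~\ref{lem:existence_alpha_n} applied to the closed set $\{\tau_0\}$ gives $A_1 (\sigma_0\mu)^n < \abs{u_n(\tau_0)} < A_2 (\sigma_0\mu)^n$; taking $n$-th roots yields $\abs{b_n(z_0)}^{1/n} \to \sigma_0\mu$, and in particular $b_n(z_0) \neq 0$ for all large $n$. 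The same argument with $\Lambda_\sigma$ in place of $\{\tau_0\}$ (proposition~\ref{prop:bn_two-sided_inequality} at the level $\sigma$, and $\abs{u_n(t)}$ bounded between $A_1(\Lambda_\sigma)(\sigma\mu)^n$ and $A_2(\Lambda_\sigma)(\sigma\mu)^n$ uniformly for $t \in \Lambda_\sigma$) gives $\norm{b_n}_{\Gamma_\sigma}^{1/n} \to \sigma\mu$ for every $\sigma > 1$. For $\norm{b_n}_E$ — the one place needing an extra ingredient, since $E$ is not a level curve — I would sandwich: $\norm{b_n}_E \leq \norm{b_n}_{\overline{E_\sigma}} = \norm{b_n}_{\Gamma_\sigma}$ by the maximum principle, whence $\limsup_n \norm{b_n}_E^{1/n} \leq \sigma\mu$ for every $\sigma > 1$, hence $\leq \mu$; and $\norm{b_n}_E \geq \mu^n$ because $b_n$ is monic of degree $n$ and $\mathrm{cap}(E) = \mu$ (theorem~\ref{thm:existence_ext_mapping}), so $\liminf_n \norm{b_n}_E^{1/n} \geq \mu$. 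Thus $\norm{b_n}_E^{1/n} \to \mu$.

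Finally I would assemble the pieces. Dividing, $\norm{b_n(\cdot)/b_n(z_0)}_E^{1/n} \to \mu/(\sigma_0\mu) = 1/\sigma_0 = R_{z_0}(E)$, which is \eqref{eqn:limit_normalised_b_n_on_E}, and $\norm{b_n(\cdot)/b_n(z_0)}_{\Gamma_\sigma}^{1/n} \to \sigma\mu/(\sigma_0\mu) = \sigma/\sigma_0$, which is \eqref{eqn:limit_normalised_b_n_on_Gamma_sigma}. For asymptotic optimality: for $n$ large enough $b_n/b_n(z_0) \in \cP_n(z_0)$, and \eqref{eqn:limit_normalised_b_n_on_E} says this sequence realises $R_{z_0}(E)$, so it is asymptotically optimal for $E$ with respect to $z_0$; for $\overline{E_\sigma}$ with $1 < \sigma \leq \sigma_0$, the maximum principle gives $\norm{b_n/b_n(z_0)}_{\overline{E_\sigma}} = \norm{b_n/b_n(z_0)}_{\Gamma_\sigma}$, so \eqref{eqn:limit_normalised_b_n_on_Gamma_sigma} together with $R_{z_0}(\overline{E_\sigma}) = \sigma/\sigma_0$ shows the same sequence is asymptotically optimal for $\overline{E_\sigma}$. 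The only genuinely non-routine step is the lower bound $\norm{b_n}_E \geq \mu^n$ (one could instead invoke a Bernstein--Walsh inequality comparing $\norm{b_n}_E$ with $\norm{b_n}_{\Gamma_\sigma}$): it supplies the $\liminf$ that upgrades the easy $\limsup$-bounds into genuine limits. Everything else is careful bookkeeping with proposition~\ref{prop:bn_two-sided_inequality}, lemma~\ref{lem:existence_alpha_n}, and the maximum principle.
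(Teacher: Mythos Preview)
Your proposal is correct and follows essentially the same route as the paper: identify $R_{z_0}(E)$ and $R_{z_0}(\overline{E_\sigma})$ via proposition~\ref{prop:R(C)=1/sigma_0} and the shifted Green's function, combine proposition~\ref{prop:bn_two-sided_inequality} with lemma~\ref{lem:existence_alpha_n} to obtain $\abs{b_n(z_0)}^{1/n} \to \sigma_0\mu$ and $\norm{b_n}_{\Gamma_\sigma}^{1/n} \to \sigma\mu$, and then sandwich $\norm{b_n}_E$ using the maximum principle above and the capacity/monic-polynomial lower bound below. The only cosmetic difference is that the paper phrases the lower bound via the Chebyshev constants $\mu_n \leq \norm{b_n}_E$ with $\mu_n^{1/n} \to \mu$, while you use the (equivalent, slightly sharper) statement $\norm{b_n}_E \geq \mu^n$ directly.
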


\begin{proof}
Note first that $E$ is compact and its complement $K$ is connected and regular ($E$ and $K$ satisfy the sufficient condition mentioned before proposition~\ref{prop:R(C)=1/sigma_0}).  Since $z_0 \in K$, there exists a unique $\sigma_0 > 1$ with $z_0 \in \Gamma_{ \sigma_0 }$. (Namely $\abs{ U(\Phi(z_0)) } = \sigma_0 \mu$.)  Thus $R_{z_0}(E) = \tfrac{1}{\sigma_0}$ by proposition~\ref{prop:R(C)=1/sigma_0}.

If $G$ denotes Green's function with pole at infinity for $K$, then Green's function with pole at infinity for the complement of $\overline{E_\sigma}$ is $G_\sigma(z) = G(z) - \log(\sigma)$.  Hence, for $1 < \sigma \leq \sigma_0$, $G_\sigma(z_0) = \log( \tfrac{\sigma_0}{\sigma} )$ and $R(\overline{E_\sigma}) = \tfrac{\sigma}{\sigma_0}$ by proposition~\ref{prop:R(C)=1/sigma_0}.

Let $\sigma > 1$.  By proposition~\ref{prop:bn_two-sided_inequality} there exist $C_1, C_2 > 0$ and $n_\sigma \in \N$ such that
\begin{equation*}
C_1 \abs{ u_n(t) } < \abs{ b_n(\psi(t)) } < C_2 \abs{u_n(t)} \quad \forall\, t \in \Lambda_\sigma, n \geq n_\sigma.
\end{equation*}
Apply lemma~\ref{lem:existence_alpha_n} to bound $\abs{u_n(t)}$: There exist $A_1(\Lambda_\sigma), A_2(\Lambda_\sigma) > 0$ such that \eqref{eqn:double_bound_un} holds.  We have for $t \in \Lambda_\sigma$ (i.e. $\abs{U(t)} = \sigma \mu$), $z = \psi(t) \in \Gamma_\sigma$, and $n \geq n_\sigma$ the estimate
\begin{equation}
C_1 A_1(\Lambda_\sigma) (\sigma \mu)^n < \abs{ b_n(\psi(t)) } < C_2 A_2(\Lambda_\sigma) (\sigma \mu)^n. \label{eqn:estimate_of_b_n(z)}
\end{equation}
This shows $b_n(z) \neq 0$ for $z \in \Gamma_\sigma$ and $n \geq n_\sigma$.  In particular $b_n(z_0) \neq 0$ for $n \geq n_{\sigma_0}$.  From \eqref{eqn:estimate_of_b_n(z)} follows $\lim\limits_{n \to \infty} \abs{ b_n(z) }^{\frac{1}{n}} = \sigma \mu$ for any $z \in \Gamma_\sigma$, and in particular
\begin{equation}
\lim\limits_{n \to \infty} \abs{ b_n(z_0) }^{\frac{1}{n}} = \sigma_0 \mu. \label{eqn:limit_b_(0)^1/n}
\end{equation}
Further $\lim\limits_{n \to \infty} \norm{ b_n(.) }_{ \Gamma_\sigma }^{\frac{1}{n}} = \sigma \mu$, since \eqref{eqn:estimate_of_b_n(z)} holds uniformly for $z \in \Gamma_\sigma$.  This establishes \eqref{eqn:limit_normalised_b_n_on_Gamma_sigma}.

Now, let $\mu_n \coloneq \inf \{ \norm{p(.)}_E : p(z) \text{ monic of degree } n \}$.  From \cite[sect.~1.3.4]{SmirnovLebedev} it is known that $\big( \mu_n^{\frac{1}{n}} \big)_n$ converges to the capacity $\mu$ of $E$.  Since the Faber--Walsh polynomials are monic of degree $n$, we have the estimate
\begin{equation*}
\mu_n \leq \norm{b_n(.)}_E \leq \norm{b_n(.)}_{E_\sigma} = \norm{b_n(.)}_{\Gamma_\sigma} \leq A(\sigma) (\sigma \mu)^n
\end{equation*}
where $\sigma > 1$ is arbitrary and $A(\sigma) > 0$ is some constant (cf. \eqref{eqn:estimate_of_b_n(z)}).  This shows
\begin{equation*}
\mu \leq \liminf_{n \to \infty} \norm{b_n(.)}_E^{\frac{1}{n}} \leq \limsup_{n \to \infty} \norm{b_n(.)}_E^{\frac{1}{n}} \leq \sigma \mu
\end{equation*}
and $\lim\limits_{n \to \infty} \norm{ b_n(.) }_E^{\frac{1}{n}} = \mu$, since $\sigma > 1$ was arbitrary.  Combining this and \eqref{eqn:limit_b_(0)^1/n} shows \eqref{eqn:limit_normalised_b_n_on_E}.
\end{proof}

\begin{rem}
Another way to formulate proposition~\ref{prop:FW_asympt_opt} is that $\tfrac{ b_n(z) }{ b_n(z_0) } - 1$ is asymptotically an optimal solution to the approximation problem $1 \approx \sum_{k=1}^n a_k (z-z_0)^k$ on $E$.
Such approximation problems often occur in the analysis of iterative methods for solving linear algebraic systems. 
For instance, for semi-iterative methods asymptotically optimal polynomials normalised in $z_0 = 1$ play an important role, see e.g. \cite{EiermannNiethammer1983} (where this concept was actually introduced) and \cite{EiermannNiethammerVarga1985}.
Further, polynomials normalised in $z_0 = 0$ appear in the convergence analysis of the CG-, MINRES- and GMRES-methods, see e.g. \cite{Greenbaum1997}, \cite{LiesenStrakos2013} or \cite{Saad2003}.
\end{rem}

\section{An Example}

The major difficulty using Faber--Walsh polynomials is, of course, to find the exterior mapping function $\psi$.
To our knowledge, no example has been published in the literature so far.

An exterior mapping function can be explicitly constructed for the exterior of two (real) intervals symmetric with respect to zero (actually, it is sufficient that both intervals have same length, cf. corollary~\ref{cor:ext_map_for_linear_trafo_of_2_int}).

\subsection{Exterior mapping function}

The exterior mapping function for the exterior of two real intervals symmetric with respect to zero is introduced.

\begin{prop} \label{prop:ext_mapping_for_lemniscate}
Let $E = [-\beta, -\alpha] \cup [\alpha, \beta]$ with $0 < \alpha < \beta$.  Set $a \coloneq \tfrac{\beta+\alpha}{2}$ and $\mu \coloneq \sqrt{\tfrac{\beta-\alpha}{2} \tfrac{\beta+\alpha}{2}}$, so that $0 < \mu < a$ holds.
Then $z = \psi(w) = w \sqrt{1 + \frac{\mu^4}{a^2} \frac{1}{w^2-a^2}}$ (with $\sqrt{1} = +1$) defines a conformal bijection
\begin{equation*}
\psi : K_1 = \{ w \in \widehat{\C} : \abs{w-a}^{\frac{1}{2}} \abs{w+a}^{\frac{1}{2}} > \mu \} \; \to \; \widehat{\C} \backslash E
\end{equation*}
with $\psi(\infty) = \infty$ and $\psi'(\infty) = 1$.  In particular $E$ has capacity $\mu$.
\end{prop}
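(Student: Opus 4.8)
The plan is to verify directly that the explicitly given $\psi$ is the conformal map described in Theorem~\ref{thm:existence_ext_mapping} by checking each required property: that $K_1$ is indeed a lemniscatic domain, that $\psi$ maps $K_1$ bijectively onto $\widehat{\C}\backslash E$, and that the normalisation $\psi(\infty)=\infty$, $\psi'(\infty)=1$ holds; uniqueness then follows from Theorem~\ref{thm:existence_ext_mapping}, and $\mu$ being the capacity of $E$ is automatic. First I would observe that $\abs{w-a}^{1/2}\abs{w+a}^{1/2}=\abs{w^2-a^2}^{1/2}$, so $U(w)=(w-a)^{1/2}(w+a)^{1/2}$ has exponents $m_1=m_2=\tfrac12$ summing to $1$ and $K_1$ is genuinely a lemniscatic domain in the sense of Definition~\ref{defn:lemniscatic_dom}. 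The choice $a=\tfrac{\beta+\alpha}{2}$ places the foci at the midpoints of the two intervals, and $\mu^2=\tfrac{\beta-\alpha}{2}\cdot\tfrac{\beta+\alpha}{2}$ is arranged precisely so that the lemniscate $\abs{w^2-a^2}=\mu^2$ passes through the right endpoints of the parameter intervals; I expect the cleanest route is to substitute $w=a$ and $w=-a$ are the branch points inside, and to track the boundary lemniscate to the two slits.

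Next I would analyse $\psi$ concretely. Writing $\psi(w)^2 = w^2\bigl(1+\tfrac{\mu^4}{a^2}\tfrac{1}{w^2-a^2}\bigr) = \tfrac{w^2(w^2-a^2+\mu^4/a^2)}{w^2-a^2}$, one sees $\psi(w)^2$ is a rational function of $w^2$; the natural substitution is $\zeta=w^2$, reducing the problem to a map between the $\zeta$- and $z^2$-planes. Under $\zeta\mapsto z^2$ the square $\psi(w)^2$ becomes a degree-two rational function of $\zeta$, and the two intervals $[\alpha,\beta]$, $[-\beta,-\alpha]$ get identified (under $z\mapsto z^2$) with the single interval $[\alpha^2,\beta^2]$. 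I would then check that the induced map sends the exterior of the lemniscate $\abs{\zeta-a^2}=\mu^2$ (a disk in the $\zeta=w^2$-plane) onto $\widehat{\C}\backslash[\alpha^2,\beta^2]$, which is a standard Joukowski-type computation: the endpoints $\alpha^2$ and $\beta^2$ correspond to $\zeta=a^2\pm\mu^2$, i.e. to the boundary circle, and a careful branch-cut discussion confirms bijectivity. Pulling back through the square roots (and fixing $\sqrt{1}=+1$ to pin down the branch and the behaviour near $\infty$) then gives that $\psi$ is a conformal bijection $K_1\to\widehat{\C}\backslash E$. For the normalisation, I would expand $\psi(w)=w\sqrt{1+O(1/w^2)}=w+O(1/w)$ as $w\to\infty$, which immediately yields $\psi(\infty)=\infty$ and $\psi'(\infty)=1$.

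The main obstacle, as usual with explicit conformal maps onto slit domains, is the careful bookkeeping of branch cuts: one must check that the square root $\sqrt{1+\tfrac{\mu^4}{a^2}\tfrac{1}{w^2-a^2}}$ is single-valued and nonvanishing on $K_1$ (equivalently, that $1+\tfrac{\mu^4}{a^2}\tfrac{1}{w^2-a^2}$ stays in the slit plane $\C\backslash(-\infty,0]$ for $w\in K_1$), and that the composite map is injective and sweeps out exactly the two slits $\pm[\alpha,\beta]$ and nothing more. I would handle this by tracking the image of the boundary lemniscate $\abs{w^2-a^2}=\mu^2$: parametrising $w^2-a^2=\mu^2 e^{i\theta}$, one computes $\psi(w)^2$ explicitly and checks it traces out $[\alpha^2,\beta^2]$ twice, from which the slit structure in the $z$-plane follows; an argument-principle or boundary-correspondence argument then upgrades this to a global bijection. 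Everything else is routine algebra with the relations $a\pm\mu^2/a$... more precisely with $a^2\pm\mu^2$ and the identities $\alpha^2=a^2-\mu^2$ wait — one checks $a^2-\mu^2=\bigl(\tfrac{\beta+\alpha}{2}\bigr)^2-\tfrac{\beta^2-\alpha^2}{4}=\alpha\beta$ and $a^2+\mu^2$ similarly, and these numeric identities are what make the endpoints land correctly.
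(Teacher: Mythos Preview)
Your plan is correct and shares the paper's core idea: reduce via squaring to a Joukowski-type map from the exterior of a disk onto a single slit, then undo the square. The paper organizes this slightly differently: it restricts to the right half-plane (where $w\mapsto w^2$ is injective), writes $\psi$ explicitly as a four-fold composition $\psi_4\circ\psi_3\circ\psi_2\circ\psi_1$ with $\psi_1(w)=w^2-a^2$, $\psi_2(w_1)=\tfrac12(w_1+\mu^4/w_1)$ the Joukowski map, $\psi_3$ a translation, and $\psi_4$ a square root, and then extends to the left half-plane by Schwarz reflection across the imaginary axis. Your proposed boundary-correspondence/argument-principle route would also work, but the paper's half-plane restriction plus reflection is a cleaner device for handling the 2-to-1 nature of $w\mapsto w^2$ and for pinning down the branch of the square root globally; your sketch leaves this step (``pulling back through the square roots'') a bit implicit.

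One algebraic slip at the end: $a^2-\mu^2=\bigl(\tfrac{\beta+\alpha}{2}\bigr)^2-\tfrac{\beta^2-\alpha^2}{4}=\tfrac{\alpha(\alpha+\beta)}{2}$, not $\alpha\beta$. The endpoint check you want is rather that at $\zeta=a^2\pm\mu^2$ one gets $z^2=(a^2\pm\mu^2)^2/a^2$, hence $z=(a^2\pm\mu^2)/a$, and since $a^2+\mu^2=\tfrac{\beta(\beta+\alpha)}{2}$ and $a=\tfrac{\beta+\alpha}{2}$ this gives $z=\beta$ (and similarly $z=\alpha$), so the slit lands correctly.
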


\begin{proof}
Let $U(w) = (w-a)^{\frac{1}{2}} (w+a)^{\frac{1}{2}}$.  The map $\psi$ is constructed for the right half-plane $\re(w) > 0$, then extended to $\{ w \in \widehat{\C} : \abs{U(w)} > \mu \}$ by the Schwarz reflection principle.  
First, $w_1 = \psi(w) = w^2 - a^2$ maps $\{ w \in \widehat{\C} : \re(w) > 0, \abs{U(w)} > \mu \}$ onto $\{ w : \abs{w} > \mu^2 \} \backslash ]-\infty, -a]$.  Then $w_2 = \psi_2(w_1) = \tfrac{1}{2} ( w_1 + \tfrac{\mu^4}{w_1} )$ maps this set to the complement of $]-\infty, \psi_2(a)] \cup [-\mu^2, \mu^2]$.  Now, $w_3 = \psi_3(w_2) = w_2 + \tfrac{1}{2} ( a^2 + \tfrac{\mu^4}{a^2} )$ takes this set to the complement of $]-\infty, 0] \cup [\tfrac{\alpha^2}{2}, \tfrac{\beta^2}{2}]$, which is mapped by $z = \psi_4(w_3) = \sqrt{2 w_3}$ (principal value of the square root) to $\{ z : \re(z) > 0 \} \backslash [\alpha, \beta]$.  Each of these maps is a conformal bijection on the image of the previous one, so that $z = \psi_+(w) = (\psi_4 \circ \psi_3 \circ \psi_2 \circ \psi_1) (w)$ is a conformal bijection from $\{w \in \widehat{\C} : \re(w) > 0, \abs{U(w)} > \mu \}$ to $\{ z \in \widehat{\C} : \re(z) > 0, z \notin [\alpha, \beta] \}$.

Note that this map extends to the imaginary axis (and preserves its orientation).  Thus $\psi_+$ can be extended by the Schwarz reflection principle to $\psi$ as stated in the theorem.  Further, $\psi$ can be extended analytically to $\{ w : \abs{U(w)} \geq \mu \}$, but not conformally.
\end{proof}

Proposition~\ref{prop:ext_mapping_for_lemniscate} gives a new proof for the well-known fact that $E = [-\beta, -\alpha] \cup [\alpha, \beta]$ has logarithmic capacity $\tfrac{\sqrt{\beta^2-\alpha^2}}{2}$, cf. \cite[Exercise, p.~178]{Saff2010}.  We have an immediate (slight) generalisation of proposition~\ref{prop:ext_mapping_for_lemniscate}.

\begin{cor} \label{cor:ext_map_for_linear_trafo_of_2_int}
Let the notation be as in proposition~\ref{prop:ext_mapping_for_lemniscate} and let $T$ be a M\"{o}bius transformation ($T(z) = Az+B$ with $A \neq 0$).  Then $T(E)$ is conformally equivalent to the lemniscatic domain
\begin{equation*}
T(K_1) = \{ w \in \widehat{\C} : \abs{ w - T(a) }^{\frac{1}{2}} \abs{ w - T(-a) }^{\frac{1}{2}} > \abs{A} \mu \},
\end{equation*}
via the conformal map $T \circ \psi \circ T^{-1}$, which fulfills $(T \circ \psi \circ T^{-1})(\infty) = \infty$ and $(T \circ \psi \circ T^{-1})'(\infty) = 1$.
\end{cor}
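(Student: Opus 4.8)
The plan is to deduce the corollary from Proposition~\ref{prop:ext_mapping_for_lemniscate} by conjugating the exterior map $\psi$ with the affine transformation $T$. Since $T(z) = Az + B$ with $A \neq 0$ is an affine map, it is a conformal bijection of $\widehat{\C}$ onto itself fixing $\infty$, with inverse $T^{-1}(\zeta) = \tfrac{1}{A}\zeta - \tfrac{B}{A}$ also affine; in particular $T(\widehat{\C}\setminus E) = \widehat{\C}\setminus T(E)$. Hence $T\circ\psi\circ T^{-1}$, being a composition of conformal bijections, maps $T(K_1)$ conformally and bijectively onto $\widehat{\C}\setminus T(E)$. It then remains only to identify $T(K_1)$ explicitly and to check the normalisation at infinity.

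First I would compute $T(K_1)$. With $U(w) = (w-a)^{\frac12}(w+a)^{\frac12}$ and $w = T^{-1}(\widetilde w) = \tfrac{1}{A}(\widetilde w - B)$, one has $w \mp a = \tfrac{1}{A}\bigl(\widetilde w - T(\pm a)\bigr)$, so that $\abs{U(w)} = \tfrac{1}{\abs{A}}\abs{\widetilde w - T(a)}^{\frac12}\abs{\widetilde w - T(-a)}^{\frac12}$. Thus $\abs{U(w)} > \mu$ is equivalent to $\abs{\widetilde w - T(a)}^{\frac12}\abs{\widetilde w - T(-a)}^{\frac12} > \abs{A}\mu$, which is precisely the description of $T(K_1)$ in the statement; moreover, since $T(a) \neq T(-a)$, this is a lemniscatic domain in the sense of Definition~\ref{defn:lemniscatic_dom}, with foci $T(a), T(-a)$, exponents $\tfrac12, \tfrac12$ and constant $\abs{A}\mu > 0$.

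Next I would verify the two normalisations. Since $T$, $T^{-1}$ and $\psi$ all fix $\infty$, so does $T\circ\psi\circ T^{-1}$. For the derivative at infinity, I would use the Laurent expansion $\psi(w) = w + c_0 + \cO(\tfrac{1}{w})$ near $\infty$ (which encodes $\psi(\infty)=\infty$ and $\psi'(\infty)=1$); substituting $w = \tfrac{1}{A}\widetilde w - \tfrac{B}{A}$ and then applying $T(\zeta) = A\zeta + B$ gives $(T\circ\psi\circ T^{-1})(\widetilde w) = \widetilde w + Ac_0 + \cO(\tfrac{1}{\widetilde w})$, whose leading coefficient is $1$, i.e. $(T\circ\psi\circ T^{-1})'(\infty) = 1$.

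There is no genuine obstacle here; the argument is a routine change of variables, and the only places calling for a little care are the algebraic identification of $T(K_1)$ and the bookkeeping in the expansion at $\infty$. As a consistency check, uniqueness of the lemniscatic domain up to translation (see the discussion after Theorem~\ref{thm:existence_ext_mapping}) then forces $\abs{A}\mu$ to be the logarithmic capacity of $T(E)$, recovering the classical scaling $\mathrm{cap}(AE+B) = \abs{A}\,\mathrm{cap}(E)$.
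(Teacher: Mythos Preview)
Your proposal is correct. The paper does not supply a proof of this corollary at all, calling it ``an immediate (slight) generalisation'' of Proposition~\ref{prop:ext_mapping_for_lemniscate}; your argument spells out exactly the conjugation-by-$T$ computation and the normalisation check at infinity that the paper leaves implicit, so there is nothing to compare.
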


In the notation of proposition~\ref{prop:ext_mapping_for_lemniscate}, the inverse of $\psi$ is given by
\begin{equation}
w = \Phi(z) = \sqrt{ \tfrac{z^2}{2} + \tfrac{a^2}{2} - \tfrac{\mu^4}{2 a^2} + \sqrt{ \tfrac{1}{4} (z^2 - a^2 - \tfrac{\mu^4}{a^2} )^2 - a^2} }, \label{eqn:Phi_in_the_example}
\end{equation}
with suitably chosen branches of the square root.  Now, $R_{z_0}(E)$, $z_0 \in \C \backslash E$, can easily be determined.
Indeed, $z_0 \in \Gamma_{\sigma_0}$ if and only if $\Phi(z_0) \in \Lambda_{\sigma_0}$, i.e. if $\sigma_0 \mu = \abs{U(\Phi(z_0))}$.  Here $\Phi(z_0)$ can be determined from \eqref{eqn:Phi_in_the_example}.  Hence, by proposition~\ref{prop:R(C)=1/sigma_0},
\begin{equation*}
R_{z_0}(E) = \tfrac{1}{\sigma_0} = \tfrac{\mu}{ \abs{U(\Phi(z_0))} } = \tfrac{\mu}{ \abs{\Phi(z_0)^2 - a^2 }^{\frac{1}{2}} }.
\end{equation*}
For the special case $z_0 = 0$ we have $\Phi(0) = 0$, so that
\begin{equation}
R_0(E) = \tfrac{1}{\sigma_0} = \tfrac{\mu}{a} = \tfrac{ \sqrt{\beta-\alpha} }{ \sqrt{\beta+\alpha} }. \label{eqn:R0E}
\end{equation}
(Note that this expression appears in a bound for the relative residual norm in step $k$ of the MINRES-method, see e.g. \cite[equation (3.15)]{Greenbaum1997}.)

Here, it seems preferable to work directly with the mapping $\psi$.  In the remainder of this subsection we determine the Laurent series of $\psi$ at infinity.  Setting $\tau = \tfrac{1}{w}$ we have
\begin{equation*}
\psi(w) = \psi(\tfrac{1}{\tau}) = \tfrac{1}{\tau} \sqrt{ 1 + \tfrac{\mu^4}{a^4} \tfrac{ (a \tau)^2}{ 1 - ( a \tau )^2} }.
\end{equation*}
For the sake of brevity set
$t \coloneq (a \tau)^2$.  Then, it is sufficient to determine the power series of $\sqrt{ 1 + \tfrac{\mu^4}{a^4} \frac{t}{1-t} }$ at $t_0 = 0$.  Clearly,
\begin{equation}
1 + \tfrac{\mu^4}{a^4} \tfrac{t}{1-t} = 1 + \sum\limits_{n=1}^\infty \tfrac{\mu^4}{a^4} t^n \label{eqn:Taylorreihe_des_Radikanten}
\end{equation}
for $t$ with $\abs{t} < 1$.  Write $\psi(w) = \tfrac{1}{\tau} (f \circ g)(a^2 \tau^2)$ with $f(w) = \sqrt{w}$ and $g(t) = 1 + \tfrac{\mu^4}{a^4} \tfrac{t}{t+1}$.  Then, using the formula of Fa\`{a} di Bruno the series expansion of $\psi$ can be directly computed.

\begin{prop} \label{prop:explizite_Formel_fuer_Laurentreihe}
The Laurent series of $\psi$ at infinity is
\begin{equation*}
\psi(w) = w + \sum\limits_{n=1}^\infty \Bigg( \sum\limits_{ (k_1, \ldots, k_n) \in T_n } \tfrac{1}{ k_1! \ldots k_n! } \big( \tfrac{\mu^4}{a^4} \big)^{k_1 + \ldots + k_n} \prod\limits_{j=0}^{ k_1 + \ldots + k_n - 1} (\tfrac{1}{2} - j ) \Bigg) a^{2n} \frac{1}{w^{2n-1}},
\end{equation*}
where $T_n = \{ (k_1, \ldots, k_n) \in \N_0^n : 1 k_1 + 2 k_2 + \ldots + n k_n = n \}$.
\end{prop}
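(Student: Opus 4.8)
The plan is to reduce, as already indicated before the statement, to the Taylor expansion at $t = 0$ of $h(t) \coloneq (f \circ g)(t)$, where $f(w) = \sqrt{w}$ (principal branch) and $g(t) = 1 + \tfrac{\mu^4}{a^4} \tfrac{t}{1-t}$, and then to substitute $t = a^2 \tau^2$, $\tau = 1/w$, into $\psi(w) = \tfrac{1}{\tau} h(a^2 \tau^2)$. First I would record the analytic facts: by \eqref{eqn:Taylorreihe_des_Radikanten} the Möbius transformation $g$ is holomorphic on $\{ \abs{t} < 1 \}$, with $g(0) = 1$ and Taylor coefficients $\tfrac{g^{(j)}(0)}{j!} = \tfrac{\mu^4}{a^4}$ for every $j \geq 1$; and since $0 < \mu < a$ one checks that $g$ maps $\{ \abs{t} < 1 \}$ into a right half-plane avoiding $0$, hence into $\C \backslash (-\infty, 0]$, so that $h = f \circ g$ is holomorphic on $\{ \abs{t} < 1 \}$, say $h(t) = \sum_{n=0}^\infty c_n t^n$ there.

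Next I would compute the $c_n$ by means of Fa\`a di Bruno's formula. Since $f(w) = w^{1/2}$ one has $f^{(m)}(w) = \big( \prod_{j=0}^{m-1} (\tfrac{1}{2} - j) \big) w^{1/2 - m}$, so $f^{(m)}(g(0)) = f^{(m)}(1) = \prod_{j=0}^{m-1} (\tfrac{1}{2} - j)$ (the empty product being $1$). Writing $m \coloneq k_1 + \cdots + k_n$, Fa\`a di Bruno's formula gives
\begin{equation*}
c_n = \frac{h^{(n)}(0)}{n!} = \sum_{(k_1, \ldots, k_n) \in T_n} \frac{1}{k_1! \cdots k_n!} f^{(m)}(1) \prod_{j=1}^n \Big( \frac{g^{(j)}(0)}{j!} \Big)^{k_j}.
\end{equation*}
The key simplification is that, since $\tfrac{g^{(j)}(0)}{j!} = \tfrac{\mu^4}{a^4}$ for \emph{every} $j \geq 1$, the product over $j$ collapses to $\big( \tfrac{\mu^4}{a^4} \big)^{k_1 + \cdots + k_n}$, irrespective of which indices $j$ actually occur; hence
\begin{equation*}
c_n = \sum_{(k_1, \ldots, k_n) \in T_n} \frac{1}{k_1! \cdots k_n!} \Big( \frac{\mu^4}{a^4} \Big)^{k_1 + \cdots + k_n} \prod_{j=0}^{k_1 + \cdots + k_n - 1} \Big( \frac{1}{2} - j \Big).
\end{equation*}

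Finally I would substitute back. Since $h(0) = f(1) = \sqrt{1} = 1$ we have $c_0 = 1$, and for $\abs{w} > a$ (equivalently $\abs{a^2 \tau^2} < 1$),
\begin{equation*}
\psi(w) = \frac{1}{\tau} h(a^2 \tau^2) = \frac{1}{\tau} + \sum_{n=1}^\infty c_n a^{2n} \tau^{2n-1} = w + \sum_{n=1}^\infty c_n a^{2n} \frac{1}{w^{2n-1}},
\end{equation*}
which is exactly the asserted expansion; since it converges on the punctured neighbourhood $\{ \abs{w} > a \}$ of $\infty$ (and there represents $\psi$), it is the Laurent series of $\psi$ at infinity. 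I expect the only real obstacle to be bookkeeping in Fa\`a di Bruno's formula --- keeping the index set $T_n$ and the exponent $m = k_1 + \cdots + k_n$ straight and observing the collapse of $\prod_j (g^{(j)}(0)/j!)^{k_j}$ to $(\mu^4/a^4)^m$ --- with no analytic subtlety beyond the easy verification of holomorphy of $h$ on the unit disk, which uses $0 < \mu < a$.
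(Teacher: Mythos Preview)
Your proposal is correct and follows exactly the approach the paper itself indicates (the paper gives only the one-line hint ``using the formula of Fa\`a di Bruno the series expansion of $\psi$ can be directly computed'' and no further details). You have simply filled in the computation: the derivatives $f^{(m)}(1) = \prod_{j=0}^{m-1}(\tfrac{1}{2}-j)$, the observation $g^{(j)}(0)/j! = \mu^4/a^4$ for all $j \geq 1$ from~\eqref{eqn:Taylorreihe_des_Radikanten}, and the resulting collapse of the Fa\`a di Bruno sum --- all of which are correct.
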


However, this explicit formula seems of little practical value.  We derive a simple recursion formula for the coefficients of the series expansion \eqref{eqn:coeff_of_psi} of $\psi$.

\begin{prop} \label{prop:rekursive_Formel_fuer_Laurentreihe}
The Laurent series of $\psi$ at infinity is
\begin{equation*}
\psi(w) = w + \sum\limits_{n=1}^\infty d_n a^{2n} \tfrac{1}{w^{2n-1}},
\end{equation*}
where the coefficents $d_n$, $n \in \N$, are (uniquely) determined by the following recursion formula:
\begin{equation*}
\begin{aligned}
d_0 &\coloneq 1, \\
d_n &\coloneq \tfrac{1}{2} \tfrac{\mu^4}{a^4} - \tfrac{1}{2} \sum_{k=1}^{n-1} d_k d_{n-k}, \quad n \geq 1.
\end{aligned}
\end{equation*}
\end{prop}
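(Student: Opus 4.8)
The plan is to work with the analytic function
\[
F(t) \coloneq \sqrt{ 1 + \tfrac{\mu^4}{a^4} \tfrac{t}{1-t} } = f(g(t)),
\]
in the notation of the computation preceding proposition~\ref{prop:explizite_Formel_fuer_Laurentreihe}.  Since its radicand is analytic at $t = 0$ with value $1 \neq 0$, the branch with $\sqrt{1} = +1$ makes $F$ analytic in a neighbourhood of $0$, so one may write $F(t) = \sum_{n=0}^\infty d_n t^n$ for $\abs{t}$ small; this is the definition of the coefficients $d_n$.  Substituting $t = a^2/w^2$ into $\psi(w) = w\,F(a^2/w^2)$ (the identity established just before proposition~\ref{prop:explizite_Formel_fuer_Laurentreihe}) then gives immediately
\[
\psi(w) = w \sum_{n=0}^\infty d_n \tfrac{a^{2n}}{w^{2n}} = w + \sum_{n=1}^\infty d_n a^{2n} \tfrac{1}{w^{2n-1}},
\]
which is the claimed form of the Laurent series, with $d_0 = F(0) = 1$.

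To obtain the recursion I would square this relation.  From $F(t)^2 = 1 + \tfrac{\mu^4}{a^4}\tfrac{t}{1-t}$ and the expansion \eqref{eqn:Taylorreihe_des_Radikanten} of the right-hand side, comparing the coefficients of $t^n$ on both sides (the left-hand one being the Cauchy product $\sum_{k=0}^n d_k d_{n-k}$) yields
\[
\sum_{k=0}^n d_k d_{n-k} =
\begin{cases}
1 & \text{if } n = 0, \\
\tfrac{\mu^4}{a^4} & \text{if } n \geq 1.
\end{cases}
\]
The case $n = 0$ re-confirms $d_0^2 = 1$, hence $d_0 = 1$ by the choice of branch.  For $n \geq 1$ one separates the terms $k = 0$ and $k = n$, whose sum is $2 d_0 d_n = 2 d_n$, and solves for $d_n$, arriving at $d_n = \tfrac12 \tfrac{\mu^4}{a^4} - \tfrac12 \sum_{k=1}^{n-1} d_k d_{n-k}$.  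Uniqueness of the sequence $(d_n)$ is then clear, since this formula determines $d_n$ from $d_0, \dots, d_{n-1}$ and $d_0 = 1$ is forced.

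I do not expect a genuine obstacle here: the argument is a routine comparison of power-series coefficients.  The only two points that deserve a line of justification are (i) that the branch $\sqrt{1} = +1$ is precisely the one compatible with the normalisation $\psi'(\infty) = 1$ --- equivalently that $\psi(w)/w \to 1$ as $w \to \infty$, which is exactly $d_0 = 1$ --- and (ii) the legitimacy of the termwise manipulations, which holds because every series in play converges absolutely for $\abs{t} < 1$, i.e. for all sufficiently large $\abs{w}$ (the radicand being zero-free and pole-free there, as $0 < \mu < a$).
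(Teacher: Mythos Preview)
Your proposal is correct and follows essentially the same route as the paper: expand $F(t)=\sqrt{1+\tfrac{\mu^4}{a^4}\tfrac{t}{1-t}}$ in a power series, square and compare coefficients with \eqref{eqn:Taylorreihe_des_Radikanten} to obtain the recursion, then substitute $t=a^2/w^2$ to recover the Laurent series. The only cosmetic difference is the order (you derive the Laurent form first, the paper the recursion first), and you add a helpful remark tying $d_0=1$ to the normalisation $\psi'(\infty)=1$.
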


\begin{proof}
Set
\begin{equation}
\sqrt{1 + \tfrac{\mu^4}{a^4} \tfrac{t}{1-t}} = \sum\limits_{n=0}^\infty d_n t^n. \label{eqn:Ansatz_Laurentreihe_von_psi}
\end{equation}
Then $d_0 = \sqrt{1} = +1$ (principal value of the square root).  Since $w \mapsto \sqrt{1 + \tfrac{\mu^4}{a^4} \tfrac{a^2}{w^2 - a^2}}$ is analytic in a neighbourhood of $\infty$, the Taylor series \eqref{eqn:Ansatz_Laurentreihe_von_psi} is absolutely and uniformly convergent in a neighbourhood of $0$ ($t = \tfrac{1}{w}$).  Thus, squaring \eqref{eqn:Ansatz_Laurentreihe_von_psi} yields, together with \eqref{eqn:Taylorreihe_des_Radikanten},
\begin{equation*}
1 + \sum\limits_{n=1}^\infty \tfrac{\mu^4}{a^4} t^n = \sum\limits_{n=0}^\infty \Big( \sum\limits_{k=0}^n d_k d_{n-k} \Big) t^n.
\end{equation*}
Equating coefficients for $n \geq 1$ shows
\begin{equation*}
\tfrac{\mu^4}{a^4} = \sum\limits_{k=0}^n d_k d_{n-k} = d_0 d_n + \sum\limits_{k=1}^{n-1} d_k d_{n-k} + d_n d_0 = 2 d_n + \sum\limits_{k=1}^{n-1} d_k d_{n-k},
\end{equation*}
hence $d_n = \tfrac{1}{2} \tfrac{\mu^4}{a^4} - \tfrac{1}{2} \sum_{k=1}^{n-1} d_k d_{n-k}$. Then, with $\tau = \tfrac{1}{w}$,
\begin{equation*}
\psi(w)
= \tfrac{1}{\tau} \sqrt{ 1 + \tfrac{\mu^4}{a^4} \tfrac{ (a \tau)^2 }{ 1 - (a \tau)^2 } }
= \tfrac{1}{\tau} \Big( 1 + \sum\limits_{n=1}^\infty d_n a^{2n} \tau^{2n} \Big)
= w + \sum\limits_{n=1}^\infty d_n a^{2n} \tfrac{1}{w^{2n-1}},
\end{equation*}
as claimed.
\end{proof}

\subsection{Faber--Walsh polynomials} \label{sect:ex_fw_polys}

Let $E$ be as in proposition~\ref{prop:ext_mapping_for_lemniscate}.  For the construction of Faber--Walsh polynomials, we need to pick a suitable sequence $(\alpha_j)_{j=1}^\infty$ from the foci $a, -a$ of the lemniscatic domain (cf. section~\ref{sect:fw_polys}).  Here, one may choose for example one of the sequences $(a, -a, a, -a, \ldots)$ or $(-a, a, -a, a, \ldots)$.  (This follows easily from the construction of the sequence $(\alpha_j)_{j=1}^\infty$ as indicated in \cite{Walsh1958}.)  For both choices $u_{2k}(w) = (w-a)^k (w+a)^k$ and the associated Faber--Walsh polynomials $b_{2k}(z)$ are even polynomials.
This follows from the integral representation \eqref{eqn:integral_repres_bk} together with $u_{2k}(-w) = u_{2k}(w)$ and $\psi(-w) = -\psi(w)$.  Indeed, \eqref{eqn:integral_repres_bk} yields
\begin{equation*}
b_{2k}(-z) = \tfrac{1}{2 \pi i} \int_{\Lambda_\lambda} u_{2k}(\tau) \tfrac{ \psi'(\tau) }{ \psi(\tau) + z } \, d\tau
= \tfrac{1}{2 \pi i} \int_{\Lambda_\lambda} u_{2k}(-\tau) \tfrac{ \psi'(-\tau) }{ \psi(-\tau) - z } \, d(-\tau) = b_{2k}(z).
\end{equation*}
The last equality holds, since in this case, $t \mapsto \gamma(t)$ is a parametrisation of $\Lambda_\lambda$ if and only if $t \mapsto - \gamma(t)$ is a parametrisation of $\Lambda_\lambda$.  A similar argument for the Faber--Walsh polynomials with odd degree does not hold, see \eqref{eqn:first_5_fw_polys} below.

The next proposition shows that the Faber--Walsh polynomials do (in general) depend on the choice of the sequence $(\alpha_j)_{j=1}^\infty$.

\begin{prop}
Let $\alpha^+ = (a, -a, a, -a, \ldots)$, $\alpha^- = (-a, a, -a, a, \ldots)$.  Denote the polynomials $u_n(w)$ constructed from $\alpha^\pm$ by $u_n^\pm(w)$ and the associated Faber--Walsh polynomials by $b_n^\pm(z)$.  Then the following relations between $b_n^+(z)$ and $b_n^-(z)$ hold:
\begin{equation}
b_{2n}^+ (z) = b_{2n}^-(z), \quad b_{2n+1}^+ (-z) = - b_{2n+1}^-(z). \label{eqn:bnpm}
\end{equation}
In particular, for odd degree of $b_n(z)$, the coefficients for odd powers of $z$ are equal, the coefficients for even powers have opposite sign.
\end{prop}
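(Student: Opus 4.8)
The plan is to prove the single master identity
\[
b_n^-(z) = (-1)^n\, b_n^+(-z), \qquad n \geq 0,
\]
and then to obtain \eqref{eqn:bnpm} and the coefficient statement by separating the parities of $n$.

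First I would record the elementary relation between the two sequences of auxiliary polynomials. Since $\alpha_j^+ = (-1)^{j+1}a$ and $\alpha_j^- = (-1)^j a = -\alpha_j^+$, a direct computation gives
\[
u_n^+(-w) = \prod_{j=1}^n\bigl(-w-\alpha_j^+\bigr) = (-1)^n\prod_{j=1}^n\bigl(w+\alpha_j^+\bigr) = (-1)^n\prod_{j=1}^n\bigl(w-\alpha_j^-\bigr) = (-1)^n u_n^-(w),
\]
equivalently $u_n^-(-\tau) = (-1)^n u_n^+(\tau)$. Next I would feed this into the integral representation \eqref{eqn:integral_repres_bk}. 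The crucial point is that the exterior map $\psi$, the capacity $\mu$ and the level curves $\Lambda_\lambda = \{w : \abs{U(w)} = \lambda\mu\}$ depend only on $E$, not on the choice of $(\alpha_j)$ (both $\alpha^+$ and $\alpha^-$ being chosen from the \emph{same} foci $\pm a$). Moreover $\abs{U(-w)} = \abs{U(w)}$, so $\Lambda_\lambda$ is invariant under the orientation-preserving rotation $w\mapsto -w$, and $\psi(-w) = -\psi(w)$, hence $\psi'(-w) = \psi'(w)$. Substituting $\tau \mapsto -\tau$ in
\[
b_n^-(z) = \tfrac{1}{2\pi i}\int_{\Lambda_\lambda} u_n^-(\tau)\,\tfrac{\psi'(\tau)}{\psi(\tau)-z}\,d\tau
\]
and using the three facts above, the factor $(-1)^n$ coming from $u_n^-$, the sign from $d(-\tau) = -d\tau$, and the sign from $\psi(-\tau)-z = -(\psi(\tau)+z)$ combine to give
\[
b_n^-(z) = \tfrac{(-1)^n}{2\pi i}\int_{\Lambda_\lambda} u_n^+(\tau)\,\tfrac{\psi'(\tau)}{\psi(\tau)+z}\,d\tau = (-1)^n b_n^+(-z),
\]
where the last equality is \eqref{eqn:integral_repres_bk} for $b_n^+$ evaluated at $-z$; this is legitimate because $\Gamma_\sigma = \psi(\Lambda_\sigma)$ is symmetric about $0$, so $-z$ lies interior to $\Gamma_\lambda$ whenever $z$ does, and since the identity is between polynomials it then holds for all $z \in \C$.

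It remains to specialize. For $n = 2k$ the master identity reads $b_{2k}^-(z) = b_{2k}^+(-z)$; combined with the evenness of $b_{2k}^+$ noted just above in this subsection (or, more directly, with $u_{2k}^+ = u_{2k}^-$, which forces $b_{2k}^+ = b_{2k}^-$ straight from \eqref{eqn:defn_bk}) this yields $b_{2k}^+ = b_{2k}^-$. For $n = 2k+1$ it reads $-b_{2k+1}^+(-z) = b_{2k+1}^-(z)$, i.e.\ $b_{2k+1}^+(-z) = -b_{2k+1}^-(z)$, which is the second half of \eqref{eqn:bnpm}. Finally, writing $b_{2k+1}^+(z) = \sum_j c_j z^j$, the latter identity gives $b_{2k+1}^-(z) = -\sum_j (-1)^j c_j z^j$, so the $z^j$-coefficient of $b_{2k+1}^-$ equals $(-1)^{j+1}c_j$, that is, it coincides with $c_j$ for odd $j$ and with $-c_j$ for even $j$, which is the last assertion. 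The only delicate point in the whole argument is the bookkeeping of the three sign contributions in the substitution $\tau\mapsto-\tau$; this is essentially the same computation already carried out in this subsection to show $b_{2k}(-z) = b_{2k}(z)$, so no genuine obstacle arises.
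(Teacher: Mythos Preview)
Your proof is correct and follows essentially the same route as the paper: the integral representation \eqref{eqn:integral_repres_bk} combined with the substitution $\tau\mapsto-\tau$ and the symmetries $\psi(-\tau)=-\psi(\tau)$, $\Lambda_\lambda=-\Lambda_\lambda$. The paper treats the even and odd cases separately, using $u_{2n}^+=u_{2n}^-$ for the former and the explicit factorisations $u_{2n+1}^\pm(\tau)=(\tau\mp a)(\tau^2-a^2)^n$ for the latter; your single master identity $b_n^-(z)=(-1)^n b_n^+(-z)$, obtained from $u_n^-(-\tau)=(-1)^n u_n^+(\tau)$, packages both cases into one computation but is otherwise the same argument.
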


\begin{proof}
The proposition follows from the integral representation \eqref{eqn:integral_repres_bk} of the Faber--Walsh polynomials, if one takes into account the relations $u_{2n}^+ (\tau) = u_{2n}^-(\tau)$, $u_{2n+1}^+ (\tau) = (\tau-a) u_{2n}^+(\tau)$ and $u_{2n+1}^-(\tau) = (\tau+a) u_{2n}^+(\tau)$.
\end{proof}

Now, let us pick the sequence $\alpha = \alpha^+ = (a, -a, a, -a, \ldots)$ for the rest of this section.

\begin{lem}
With the previous assumptions, the Faber--Walsh polynomials $b_n(z)$ associated with $E$ and $\alpha$ satisfy
\begin{align*}
b_{2k}(0) &= (-1)^k \tfrac{ a^{4k} + \mu^{4k} }{ a^{2k} } =  (-1)^k \big( a^{2k} + \big( \tfrac{\mu^2}{a} \big)^{2k} \big), \quad & k \geq 1, \\
b_{2k+1}(0) &= (-1)^{k-1} \tfrac{ a^{4k} + \mu^{4k} }{ a^{2k-1} } = (-1)^{k-1} a \big( a^{2k} + \big( \tfrac{\mu^2}{a} \big)^{2k} \big), \quad & k \geq 1.
\end{align*}
This shows $\abs{b_n(0)}^{\frac{1}{n}} \to a$ as $n \to \infty$, which also follows from \eqref{eqn:limit_b_(0)^1/n} and \eqref{eqn:R0E}.
\end{lem}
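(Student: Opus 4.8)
Here the natural tool is the integral representation \eqref{eqn:integral_repres_bk}: taking $z=0$ it reads $b_n(0) = \tfrac{1}{2\pi i}\int_{\Lambda_\lambda} u_n(\tau)\,\tfrac{\psi'(\tau)}{\psi(\tau)}\,d\tau$, valid for every $\lambda$ larger than the number $\sigma_0$ with $0\in\Gamma_{\sigma_0}$; since $\Phi(0)=0$ and $|U(0)|=a$, we have $\sigma_0=a/\mu$ (consistent with \eqref{eqn:R0E}). The idea is to turn the integrand into a \emph{rational} function of $\tau$ and then evaluate the integral by residues. To that end I would first square the mapping function of proposition~\ref{prop:ext_mapping_for_lemniscate}: setting $b^2\coloneq a^2-\mu^4/a^2$ (so $0<b^2<a^2$ because $\mu<a$), a short computation gives $\psi(w)^2=\tfrac{w^2(w^2-b^2)}{w^2-a^2}$. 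Hence $\psi^2$ is rational, so its logarithmic derivative is rational as well, and
\[
\frac{\psi'(w)}{\psi(w)}=\frac12\,\frac{(\psi^2)'(w)}{\psi^2(w)}=\frac{1}{w}+\frac{w}{w^2-b^2}-\frac{w}{w^2-a^2}.
\]

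Next I would insert the explicit $u_n$. For the chosen sequence $\alpha^+=(a,-a,a,-a,\dots)$ one has $u_{2k}(\tau)=(\tau^2-a^2)^k$ and $u_{2k+1}(\tau)=(\tau-a)(\tau^2-a^2)^k$. Multiplying by the partial-fraction expansion above, the integrand becomes a rational function of $\tau$ whose only finite poles lie at $\tau=0$ (from $1/w$) and at $\tau=\pm b$ (from $w/(w^2-b^2)$): here the hypothesis $k\geq1$ is essential, since the factor $(\tau^2-a^2)^k$ cancels the would-be poles at $\pm a$ and turns the last summand into a polynomial. For $\lambda$ large enough $\Lambda_\lambda=\{\,|\tau-a|\,|\tau+a|=\lambda^2\mu^2\,\}$ is a single positively oriented closed curve (a Cassini oval) enclosing $0,\pm a,\pm b$, so the residue theorem expresses $b_n(0)$ as the sum of the residues of the integrand at $0$, $b$ and $-b$.

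The last step is to compute these three residues, which is elementary once one uses $b^2-a^2=-\mu^4/a^2$; adding them produces $(-1)^k\bigl(a^{2k}+\mu^{4k}/a^{2k}\bigr)$ for $n=2k$ and $(-1)^{k+1}\bigl(a^{2k+1}+\mu^{4k}/a^{2k-1}\bigr)$ for $n=2k+1$, which are exactly the asserted values (recall $(-1)^{k+1}=(-1)^{k-1}$ and $a\bigl(a^{2k}+\mu^{4k}/a^{2k}\bigr)=(a^{4k}+\mu^{4k})/a^{2k-1}$). The concluding limit $|b_n(0)|^{1/n}\to a$ then follows at once, since $|b_{2k}(0)|=a^{2k}\bigl(1+(\mu/a)^{4k}\bigr)$ and $|b_{2k+1}(0)|=a^{2k+1}\bigl(1+(\mu/a)^{4k}\bigr)$ with $\mu/a<1$; alternatively it is a special case of \eqref{eqn:limit_b_(0)^1/n}, as $\sigma_0\mu=a$ by \eqref{eqn:R0E}.

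The only points that really need care are the observation that $\psi'/\psi$ is rational (it is, precisely because $\psi^2$ is) together with the exact partial-fraction decomposition, and a brief verification of the geometry of $\Lambda_\lambda$ ensuring that all the relevant poles are encircled once with positive orientation; the residue arithmetic itself is routine.
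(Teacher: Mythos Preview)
Your proposal is correct and follows essentially the same route as the paper's proof: both start from the integral representation \eqref{eqn:integral_repres_bk} at $z=0$, compute $\psi'/\psi$ as an explicit rational function (your $b$ is exactly the paper's $\sqrt{(a^4-\mu^4)/a^2}$), observe that after multiplication by $u_n(\tau)$ the only remaining poles inside $\Lambda_\lambda$ are the three simple poles at $0$ and $\pm b$, and finish with the residue theorem. Your partial-fraction form $\tfrac{1}{w}+\tfrac{w}{w^2-b^2}-\tfrac{w}{w^2-a^2}$, obtained via the logarithmic derivative of the rational function $\psi^2$, is algebraically identical to the paper's single-fraction expression for $\psi'/\psi$ and makes the cancellation of the poles at $\pm a$ and the residue bookkeeping slightly more transparent, but the argument is the same.
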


\begin{proof}[Sketch of the proof]
Note that $u_{2k}(\tau) = (\tau^2 - a^2)^k$ and $u_{2k+1}(\tau) = (\tau-a) (\tau^2 - a^2)^k$, by the choice of $\alpha$.  Further, $\psi$ from proposition~\ref{prop:ext_mapping_for_lemniscate} satisfies
\begin{equation*}
\tfrac{\psi'(\tau)}{\psi(\tau)} = \tfrac{1}{\tau} \tfrac{1}{ \tau^2 - a^2 + \frac{\mu^4}{a^2} } \tfrac{ (\tau^2 - a^2)^2 - \mu^4 }{ (\tau^2-a^2) }.
\end{equation*}
Now, the integral representation \eqref{eqn:integral_repres_bk} of the Faber--Walsh polynomials yields
\begin{equation*}
b_{2k}(0) = \tfrac{1}{2 \pi i} \int_{\Lambda_\lambda} u_{2k}(\tau) \tfrac{ \psi'(\tau) }{ \psi(\tau) } \, d\tau = \tfrac{1}{2 \pi i} \int_{ \Lambda_\lambda } \tfrac{ ( \tau^2 - a^2)^{k-1} \big( (\tau^2 - a^2)^2 - \mu^4 \big) }{ \tau \big( \tau^2 - a^2 + \frac{\mu^4}{a^2} \big) } \, d\tau.
\end{equation*}
The integrand is a meromorphic function with the three simple poles $\pm \sqrt{ \tfrac{a^4-\mu^4}{a^2} }$ and $0$, so that the last integral can be computed with the residue theorem.  In the same manner, also $b_{2k+1}(0)$ can be computed.
\end{proof}

Recall proposition~\ref{prop:rekursive_Formel_fuer_Laurentreihe} for the computation of the coefficients of the exterior mapping function $\psi$.  Using these coefficients and the recursion formula for the Faber--Walsh polynomials from section~\ref{sect:recursion}, we can compute (some of) the Faber--Walsh polynomials.  Let $E$ as above and $(\alpha_j)_{j=1}^\infty = (a, -a, a, -a, \ldots)$.  From proposition~\ref{prop:rekursive_Formel_fuer_Laurentreihe} we find $c_{2k} = 0$, $k \geq 0$, for the coefficients of the Laurent series at infinity of $\psi$.  Further,
\begin{equation*}
d_1 = \tfrac{1}{2} \tfrac{\mu^4}{a^4} - \tfrac{1}{2} \sum_{k=1}^0 d_k d_{1-k} = \tfrac{1}{2} \tfrac{\mu^4}{a^4}, \quad
d_2 = \tfrac{1}{2} \tfrac{\mu^4}{a^4} - \tfrac{1}{2} \sum_{k=1}^1 d_k d_{2-k} = \tfrac{1}{2} \tfrac{\mu^4}{a^4} - \tfrac{1}{2} \big( \tfrac{1}{2} \tfrac{\mu^4}{a^4} \big)^2.
\end{equation*}
Hence $c_1 = a^2 d_1 = \tfrac{1}{2} \tfrac{\mu^4}{a^2}$ and $c_3 = a^4 d_2 = \tfrac{1}{2} \mu^4 - \tfrac{1}{2} \big( \tfrac{1}{2} \tfrac{\mu^4}{a^2} \big)^2$.
Now, the recursion formula for the $b_n(z)$ from section~\ref{sect:recursion} yields, after lengthy computation,
\begin{equation}
\begin{aligned}
b_0(z) &= 1, \\
b_1(z) &= z-a, \\
b_2(z) &= z^2 - \tfrac{ a^4 + \mu^4 }{a^2}, \\
b_3(z) &= z^3 - a z^2 + (- a^2 - \tfrac{3}{2} \tfrac{\mu^4}{a^2} ) z + \tfrac{ a^4 + \mu^4}{a}, \\
b_4(z) &= z^4 - 2 \tfrac{a^4 + \mu^4}{a^2} z^2 + \tfrac{a^8 + \mu^8}{a^4}, \\
b_5(z) &= z^5 - a z^4 + ( - 2 a^2 - \tfrac{5}{2} \tfrac{\mu^4}{a^2} ) z^3 + ( 2 a^3 + 2 \tfrac{\mu^4}{a} ) z^2 + ( a^4 + \tfrac{1}{2} \mu^4 + \tfrac{15}{8} \tfrac{\mu^8}{a^4} ) z - \tfrac{a^8 + \mu^8}{a^3}.
\end{aligned} \label{eqn:first_5_fw_polys}
\end{equation}

\subsection{Approximation with truncated Faber--Walsh series}

This subsection contains numerical results on approximating $f(z) = \tfrac{1}{z}$ by a truncated Faber--Walsh series $s_n(z) = \sum_{k=0}^n a_k b_k(z)$, cf. theorem~\ref{thm:fw_series}, on $E = [-\beta, -\alpha] \cup [\alpha, \beta]$, $0 < \alpha < \beta$, as in proposition~\ref{prop:ext_mapping_for_lemniscate} .

Recall from proposition~\ref{prop:ext_mapping_for_lemniscate} that the complement $K$ of $E$ is conformally equivalent to the lemniscatic domain
\begin{equation*}
K_1 = \{ w \in \widehat{\C} : \abs{U(w)} = \abs{ w - a }^{\frac{1}{2}} \abs{ w + a }^{\frac{1}{2}} > \mu \}
\end{equation*}
by the conformal bijection $z = \psi(w) = w \sqrt{1 + \frac{\mu^4}{a^2} \frac{1}{w^2-a^2}}$.  Here $a = \tfrac{\beta+\alpha}{2}$ and $\mu = \sqrt{\tfrac{\beta-\alpha}{2} \tfrac{\beta+\alpha}{2}}$.  Let $(\alpha_j)_{j=1}^\infty = (a, -a, a, -a, \ldots)$ as in the previous subsection.

Consider the function $f(z) = \tfrac{1}{z}$ which is analytic on $E$.  Let us determine the number $\rho > 1$ from theorem~\ref{thm:fw_series}.  Since $f$ is analytic and single-valued in $\widehat{\C} \backslash \{ 0 \}$, the number $\rho$ is given by
\begin{equation*}
0 \in \Gamma_\rho \quad \Leftrightarrow \quad 0 = \psi^{-1}(0) \in \Lambda_\rho \quad \Leftrightarrow \quad \mu \rho = \abs{U(0)} = a \quad \Leftrightarrow \quad \rho = \tfrac{a}{\mu}.
\end{equation*}
Now, $f$ is analytic and single-valued at every point of $E_\rho$ (the interior of $\Gamma_\rho$), but in no $E_\sigma$ with $\sigma > \rho$, and $f$ has a singularity on $E_\rho$.

We are interested in the approximation of $f$ by a truncated Faber--Walsh series $s_n(z) = \sum_{k=0}^n a_k b_k(z)$, where $b_k(z)$ is the $k$-th Faber--Walsh polynomial associated with $E$ and $(\alpha_j)_{j=1}^\infty$, and the coefficients $a_k$ are given by \eqref{eqn:fw_series}, i.e.
\begin{equation*}
a_k = \tfrac{1}{2 \pi i} \int_{ \Lambda_\lambda } \tfrac{ f(\psi(\tau)) }{ u_{k+1}(\tau) } \, d\tau, \quad 1 < \lambda < \rho.
\end{equation*}

For a numerical example we let $E = [-\tfrac{5}{4}, - \tfrac{3}{4}] \cup [\tfrac{3}{4}, \tfrac{5}{4}]$, i.e. $a = 1$, $\mu = \tfrac{1}{2}$.  Then $\rho = \tfrac{a}{\mu} = 2$.  Note that $\Lambda_2$ is Bernoulli's lemniscate, and that $\Lambda_\lambda$ has two components for $1 < \lambda < 2$.
To determine $s_n(z)$, the coefficients $a_k$ from \eqref{eqn:fw_series} are needed.  In order to avoid a parametrisation of the generalised lemniscate $\Lambda_\lambda$, we replaced the component of $\Lambda_\lambda$ containing $\pm a$ by the polygon $P_\pm$ with the vertices $\pm a + r$, $\pm a + i r$, $\pm a - r$, $\pm a - i r$, for some $r>0$.  For both polygons, the point $\pm a + r$ was taken as starting and end point.  (The value of the integral remains unchanged if we deform the components of $\Lambda_\lambda$ to the polygon, as long as the deformation takes place in the domain of analyticity of the integrand.  In particular, $0$ must be an exterior point of $P_\pm$, while $\pm a$ must lie inside $P_\pm$.)
Suitable values of $r$ have been found to be $0.5 \leq r \leq 0.95$.  Then,
\begin{equation}
a_k = \tfrac{1}{2 \pi i} \int_{P_+} \tfrac{ f(\psi(\tau)) }{ u_{k+1}(\tau) } \, d\tau + \tfrac{1}{2 \pi i} \int_{P_-} \tfrac{ f(\psi(\tau)) }{ u_{k+1}(\tau) } \, d\tau. \label{eqn:ak_polygon_approx}
\end{equation}
All computations have been performed with MATLAB.  The Faber--Walsh polynomials $b_n(z)$ have been computed with the recursion formula from section~\ref{sect:recursion}, with the coefficients of $\psi$ from proposition~\ref{prop:rekursive_Formel_fuer_Laurentreihe}.  The integrals \eqref{eqn:ak_polygon_approx} were computed with MATLAB's \texttt{quadgk} command (adaptive Gauss-Kronrod quadrature), which supports integration over polygonal paths in the complex plane.
The coefficients $a_k$ of the Faber--Walsh series have been computed as \eqref{eqn:ak_polygon_approx} for $r = 0.7$.
The supremum norm of the error, $\norm{ f - s_n }_E$, has been evaluated by discretising both components of $E$ (meshwidth $0.01$), evaluating $\abs{ \tfrac{1}{z} - s_n(z) }$ for $z$ on the resulting grid and taking the maximum of these values.  Figure~\ref{fig:error_f-sn} shows the approximation error $\norm{f-s_n}_E$.

\begin{figure}[ht]
\includegraphics[width=\textwidth]{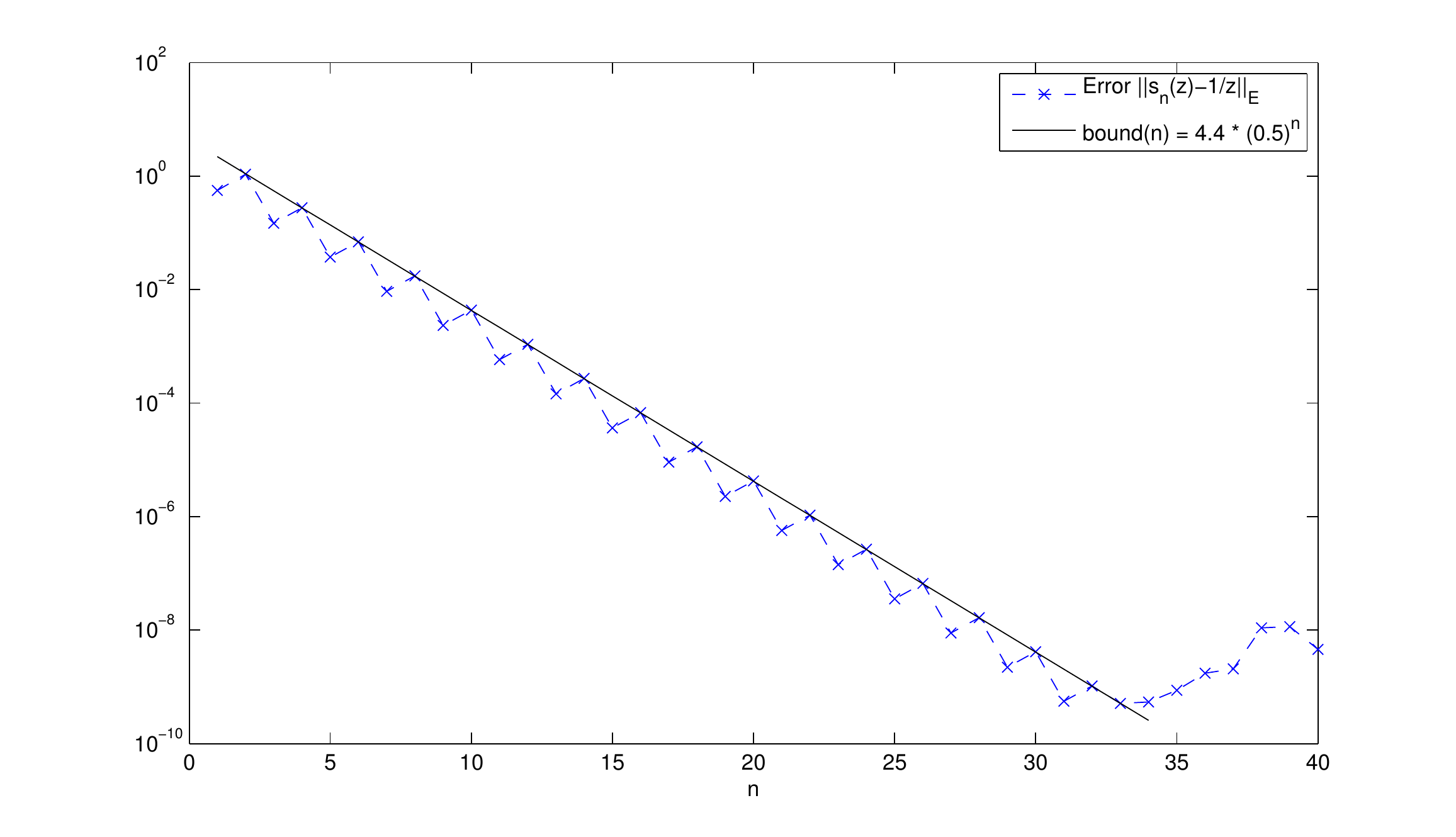}
\caption{Error $\norm{s_n(z)-\tfrac{1}{z}}_E$ for $n = 0, 1, \ldots, 40$ and $a = 1$, $\mu = \tfrac{1}{2}$ and $r = 0.7$, with the partial sums $s_n(z) = \sum_{k=0}^n a_k b_k(z)$ of the Faber--Walsh series \eqref{eqn:fw_series} for $f(z) = \tfrac{1}{z}$.}
\label{fig:error_f-sn}
\end{figure}

The minimal error $\norm{s_n(z) - \tfrac{1}{z}}_E$ is $5.1 \cdot 10^{-10}$ and occurs for degree $n = 33$.  For $n \geq 34$ the Faber--Walsh polynomials are strongly affected by round-off errors, which explains the stagnation (and even ``explosion'') of the error.  Note that for $n \leq 33$ the error is bounded by $4.4 \cdot \big( \tfrac{1}{2} \big)^n$, which nicely corresponds to the asymptotic behaviour predicted by \eqref{eqn:asymptotic_error_f-sn}.
Note also that the error decreases only every other step (more precisely for odd degrees $n$).  This is to be expected, since $f(z) = \tfrac{1}{z}$ is an odd function.
Thus, a polynomial with even degree is less likely to approximate $f$ on both intervals of $E$ at the same time, while a polynomial with odd degree should capture better the behaviour of $f$ on both intervals. \medskip

\textbf{Acknowledgements.} Thanks to Professor J\"{o}rg Liesen for helpful discussions and the careful reading of the manuscript.

\bibliographystyle{plain}
\bibliography{walsh}

\end{document}